\documentclass[11pt, reqno]{amsart}
\usepackage{amssymb,latexsym,amsmath,amsfonts,mathdots,enumitem}
\usepackage{latexsym}
\usepackage[mathscr]{eucal}
\usepackage{colortbl,xcolor}
\usepackage{lmodern}
\usepackage{sansmathaccent}
\usepackage[latin1]{inputenc}
\usepackage{tikz}
\usepackage{physics}
\usetikzlibrary{shapes,arrows}
\allowdisplaybreaks
\usetikzlibrary{matrix,calc,shapes,arrows,positioning}
\pdfmapfile{+sansmathaccent.map}

\numberwithin{equation}{section}
\theoremstyle{plain}

\newtheorem{thm}{Theorem}[section]

\newtheorem{lem}[thm]{Lemma}
\newtheorem{prop}[thm]{Proposition}
\theoremstyle{definition}
\newtheorem{defn}[thm]{Definition}
\newtheorem{rem}[thm]{Remark}

\numberwithin{equation}{section}

\def\beq{\begin{eqnarray}}
	\def\eeq{\end{eqnarray}}
\def\beqa{\begin{eqnarray*}}
	\def\eeqa{\end{eqnarray*}}

\def\beqn{\begin{equation}}
	\def\eeqn{\end{equation}}

\def\mg#1{}

\renewcommand{\epsilon}{\varepsilon}
\renewcommand{\phi}{\varphi}

\renewcommand{\bf}[1]{\textbf{#1}}
\renewcommand{\it}[1]{\textit{#1}}
\renewcommand{\sc}[1]{\textsc{#1}}
\renewcommand{\sf}[1]{\textsf{#1}}

\numberwithin{equation}{section}
\allowdisplaybreaks[4] 

\setlist[enumerate]{font=\upshape,noitemsep, topsep=0pt} 
\setlist[itemize]{noitemsep, topsep=0pt}

\begin{document}
	
	\title[Functional Models and  von Neumann Inequality on Distinguished Varieties]{Dilation and Functional Models for Pure $\mathbf{\Theta}_n$-Contractions and the von Neumann Inequality on Distinguished Varieties in $\mathbf{\Theta}_n$}
	\author{Aparna Gupta, \, Shubhankar Mandal \, Avijit Pal, \, and \, Bhaskar Paul}
	\subjclass[2010]{14H50, 14M10, 47A20, 47A25, 32A60, 32M15}
	
	\keywords{$\mathbf{\Theta}_n$-contraction, $\mathbf{\Theta}_n$-unitary, $\mathbf{\Theta}_n$-isometry, Distinguished Varieties, Taylor Joint Spectrum, von Neumann Inequality, Spectral set}
	
	\maketitle
	\begin{abstract}
In this paper, we introduce the notion of a distinguished variety in the domain $\mathbf{\Theta}_n$. One of the main results of the paper is a determinantal representation for every distinguished variety in $\mathbf{\Theta}_n$. We also show that the closure of every distinguished variety is polynomially convex. Furthermore, we obtain a dilation and a functional model for a class of pure $\mathbf{\Theta}_n$-contractions. Finally, we show that for a $\mathbf{\Theta}_n$-contraction $\mathbf{T}=(T_1,\dots,T_n)$ such that $T_n^*$ is a pure contraction, there exists an algebraic variety in $\mathbf{\Theta}_n$ for which the von Neumann inequality holds on the intersection of the closure of the variety with the distinguished boundary of $\mathbf{\Theta}_n$.
\end{abstract}
	
\section{Introduction}\label{Intro}

The \textit{symmetrized polydisc} is one of the well-studied domains in complex Euclidean space, with a rich interplay between complex geometry and operator theory. Over the last two decades, this interaction has led to remarkable developments, including canonical models, functional calculi, von Neumann-type inequalities, and dilation theorems for $\Gamma_n$-contractions.
Several other important domains closely related to the symmetrized polydisc have also been studied from both complex-geometric and operator-theoretic perspectives. Among them is the family of generalized symmetrized domains $\mathbf{\Theta}_n$, introduced in \cite{Biswas 2,Ghosh}. The symmetrized polydisc occurs as a special member of this family. While the function theory of $\mathbf{\Theta}_n$ has begun to develop, comparatively little is known about their operator theory. The purpose of this paper is to investigate the interplay between complex geometry and operator theory in the setting of the domains $\mathbf{\Theta}_n$.

Let $\mathbb D$ denote the open unit disc in $\mathbb C$. The symmetrization map
$
\mathbf s=(s_1,\ldots,s_n):\mathbb C^n\to\mathbb C^n$
is defined by
\[
s_i(z_1,\ldots,z_n)
=
\sum_{1\le k_1<\cdots<k_i\le n}
z_{k_1}\cdots z_{k_i},
\quad
1\le i\le n.
\]
Its image
$
G_n=\mathbf s(\mathbb D^n)$
is the symmetrized polydisc, while $
\Gamma_n=\mathbf s(\overline{\mathbb D}^{\,n})$
denotes its closure.
A natural generalization of the symmetrization map was introduced in \cite{Ghosh}. Let $p$ be a fixed divisor of $m$, let $\theta_0=1$, and define
\[
\theta_i(z_1,\ldots,z_n)
=
s_i(z_1^m,\ldots,z_n^m),
\quad
1\le i\le n-1,
\]
and
\[
\theta_n(z_1,\ldots,z_n)
=
(z_1\cdots z_n)^q,
\quad
q=\frac{m}{p}.
\]
The images of $\mathbb D^n$ and $\overline{\mathbb D}^{\,n}$ under
$
\boldsymbol{\theta}=(\theta_1,\ldots,\theta_n)$
are denoted by $\mathbf{\Theta}_n$ and $\overline{\mathbf{\Theta}}_n$, respectively. Observe that
$
\mathbf{\Theta}_1=\mathbb D,$
and that
$
\mathbf{\Theta}_n=G_n$
when $m=p=1$. It was shown in \cite{Biswas 2,Ghosh} that
$
(\theta_1,\ldots,\theta_n)\in\overline{\mathbf{\Theta}}_n$
if and only if every zero of
\begin{equation}\label{P}
P(z)
=
z^n-\theta_1z^{n-1}
+\cdots
+(-1)^n\theta_n^p
\end{equation}
lies in $\overline{\mathbb D}$.	

A fundamental tool in the study of operator tuples associated with a complex domain is the notion of a spectral set. Let $\Omega\subset\mathbb C^n$ be compact, and let $\mathcal O(\Omega)$ denote the algebra of functions holomorphic on a neighbourhood of $\Omega$. A commuting tuple
$
\mathbf T=(T_1,\ldots,T_n)$
is said to have $\Omega$ as a \emph{spectral set} if
$
\sigma(\mathbf T)\subseteq\Omega$
and
\[
\|f(\mathbf T)\|
\leq
\sup_{\mathbf z\in\Omega}|f(\mathbf z)|
\]
for every $f\in\mathcal O(\Omega)$.
The importance of spectral sets originates in the celebrated theorem of von Neumann, which asserts that the closed unit disc is a spectral set for every contraction.
\begin{thm}[von Neumann {\cite[Chapter~1, Corollary~1.2]{Nagy}}]
Let $T$ be a contraction. Then
\[
\|p(T)\|
\leq
\sup_{|z|\leq1}|p(z)|
\]
for every polynomial $p$.
\end{thm}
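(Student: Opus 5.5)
The plan is to prove von Neumann's inequality by dilation: first realize $T$ as a compression of a unitary, then use the spectral theorem for that unitary. The reduction rests on one classical fact, Sz.-Nagy's dilation theorem. For a contraction $T$ on $\mathcal H$ there is a Hilbert space $\mathcal K\supseteq\mathcal H$ and a unitary $U$ on $\mathcal K$ with
\[
T^k=P_{\mathcal H}U^k|_{\mathcal H}\quad\text{for all } k\ge 0 .
\]
Granting this, take any polynomial $p(z)=\sum_k a_kz^k$. By linearity $p(T)=P_{\mathcal H}\,p(U)|_{\mathcal H}$, and hence $\|p(T)\|\le\|p(U)\|$.

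Next, since $U$ is normal with $\sigma(U)\subseteq\mathbb T$, the continuous functional calculus gives
\[
\|p(U)\|=\sup_{z\in\sigma(U)}|p(z)|\le\sup_{|z|=1}|p(z)|=\sup_{|z|\le1}|p(z)|,
\]
where the last equality is the maximum modulus principle. This finishes the argument.

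The main step is constructing the unitary dilation. I would do it in two stages.

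\emph{Isometric dilation.} Let $D_T=(I-T^*T)^{1/2}$, which is well defined because $T$ is a contraction. On $\mathcal H\oplus\mathcal H\oplus\cdots$ define
\[
V(h_0,h_1,\dots)=(Th_0,\,D_Th_0,\,h_1,\,h_2,\dots).
\]
The identity $\|Th_0\|^2+\|D_Th_0\|^2=\|h_0\|^2$ shows that $V$ is an isometry. Compressing $V^k$ to the first coordinate gives exactly $T^k$, so $V$ dilates $T$.

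\emph{Unitary extension.} Extend $V$ to a unitary by the Wold decomposition or the standard bilateral-shift extension. A shortcut is to use the Schäffer matrix on $\ell^2(\mathbb Z,\mathcal H)$: put the $2\times2$ Halmos block
\[
\begin{pmatrix} T & D_{T^*}\\ D_T & -T^*\end{pmatrix}
\]
in the centre and the identity along the off-diagonal shifts. Verifying that this operator is unitary uses the intertwining relation $TD_T=D_{T^*}T$. That relation holds because $T\,f(T^*T)=f(TT^*)\,T$ for continuous $f$, which one gets by polynomial approximation.

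Finally, one must check that the compression identity holds for all powers $k$, not just $k=1$. This follows because the block structure is lower triangular relative to $\mathcal H$ and the subspace above it. I expect this power-compression bookkeeping, together with the Halmos block identity, to be the only real obstacle.
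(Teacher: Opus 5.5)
Your proof is correct and follows essentially the same route as the source the paper cites (the paper states the result without proof): compress a unitary (Sz.-Nagy) dilation $U$ of $T$ and apply the spectral theorem to get $\|p(T)\|\le\|p(U)\|\le\sup_{|z|=1}|p(z)|$. Both of your ways of building the dilation are valid: an isometric dilation followed by a unitary extension, or the Sch\"affer matrix built around the unitary Halmos block, whose unitarity uses $TD_T=D_{T^*}T$.
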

Closely related to this result is the dilation theorem of Sz.-Nagy, which states that every contraction admits a unitary power dilation.
\begin{thm}[Sz.-Nagy {\cite{paulsen}}]
Every contraction admits a unitary power dilation.
\end{thm}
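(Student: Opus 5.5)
The strategy is the classical Schäffer construction. Given a contraction $T$ on $\mathcal H$, I would write down an explicit unitary $U$ on the larger space $\mathcal K=\bigoplus_{k\in\mathbb Z}\mathcal H$, with $\mathcal H$ embedded as the zeroth coordinate. I would then check that $P_{\mathcal H}U^k|_{\mathcal H}=T^k$ for every $k\ge 0$, which is what a unitary power dilation means.

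First I introduce the defect operators $D_T=(I-T^*T)^{1/2}$ and $D_{T^*}=(I-TT^*)^{1/2}$. The basic intertwining identity $TD_T=D_{T^*}T$ follows because $TT^*T$ commutes through polynomials, hence through square roots by approximation. Taking adjoints gives $D_TT^*=T^*D_{T^*}$.

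Next I define $U$ on sequences $(h_k)_{k\in\mathbb Z}$ as a bilateral shift, except on coordinates $0$ and $1$, which are acted on by the $2\times 2$ Halmos block
\[
\begin{pmatrix} T & D_{T^*}\\ D_T & -T^*\end{pmatrix}.
\]
Concretely, $U$ is the block matrix whose only nonzero entries are: the identity on the subdiagonal entries $(k+1,k)$ for $k\neq 0,1$, the entry $T$ at $(0,0)$, the entry $D_{T^*}$ at $(0,1)$, the entry $D_T$ at $(1,0)$, and the entry $-T^*$ at $(1,1)$, with indices arranged so that all remaining coordinates are shifted. I then verify $U^*U=UU^*=I$. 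The key fact is that the Halmos block is itself unitary on $\mathcal H\oplus\mathcal H$, and this reduces to the identities $T^*T+D_T^2=I$ and $T^*D_{T^*}-D_TT^*=0$, together with their adjoint versions. The shift parts are unitary between complementary coordinate ranges, so $U$ as a whole is unitary.

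Finally I check the compression property. With the indexing chosen so that $U$ maps coordinate $0$ into coordinates $0$ and $1$, and maps coordinate $1$ and all higher coordinates further to the right without ever returning, induction shows that the zeroth coordinate of $U^kh$ is $T^kh$ for $h\in\mathcal H$. Hence $P_{\mathcal H}U^k|_{\mathcal H}=T^k$. This is the main obstacle: arranging the indices so that the mass leaving the zeroth coordinate never comes back. I would get this by making the shift on negative indices feed into coordinate $0$, while on positive indices it moves strictly forward. For $k<0$, the relation $P_{\mathcal H}U^{k}|_{\mathcal H}=(T^{*})^{|k|}$ then follows by taking adjoints.
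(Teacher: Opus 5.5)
Your strategy, Sch\"affer's construction on $\bigoplus_{k\in\mathbb Z}\mathcal H$, is the standard proof and is sound. The paper gives no proof of its own here; it only cites Paulsen's book. However, the operator you actually write down is not unitary, and the indexing issue you single out as the main obstacle is resolved incorrectly.

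With identities at $(k+1,k)$ for $k\neq 0,1$ and the Halmos block in rows and columns $\{0,1\}$, you get $(Uh)_0=h_{-1}+Th_0+D_{T^*}h_1$, $(Uh)_1=D_Th_0-T^*h_1$ and $(Uh)_2=0$. So output coordinate $0$ is fed by three input coordinates and output coordinate $2$ by none. The shift pieces and the block therefore do not act between complementary coordinate ranges, and $U$ is neither isometric nor onto. The compression also fails, because input coordinate $1$ returns to coordinate $0$ through the entry $D_{T^*}$. For $h\in\mathcal H_0$ you get $(U^2h)_0=T^2h+D_{T^*}D_Th$, which is not $T^2h$ (take $T=\tfrac12 I$). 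This also contradicts your last paragraph, where coordinate $1$ is supposed to move strictly to the right.

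The block has to be tilted: its inputs are coordinates $0$ and $-1$, and its outputs are coordinates $0$ and $1$, i.e.
\[
(Uh)_0=Th_0+D_{T^*}h_{-1},\quad (Uh)_1=D_Th_0-T^*h_{-1},\quad (Uh)_k=h_{k-1}\ \ (k\neq 0,1).
\]
So the identities sit at $(k+1,k)$ for $k\notin\{-1,0\}$, and $T,D_{T^*},D_T,-T^*$ sit at $(0,0),(0,-1),(1,0),(1,-1)$. Then $U$ is the orthogonal sum of three unitaries:
\begin{enumerate}
\item the shift $\bigoplus_{k\le -2}\mathcal H_k\to\bigoplus_{k\le -1}\mathcal H_k$;
\item the Halmos block $\mathcal H_0\oplus\mathcal H_{-1}\to\mathcal H_0\oplus\mathcal H_1$, whose unitarity is exactly your computation with $T^*T+D_T^2=I$ and $T^*D_{T^*}=D_TT^*$;
\item the shift $\bigoplus_{k\ge1}\mathcal H_k\to\bigoplus_{k\ge2}\mathcal H_k$.
\end{enumerate}
The domains and the ranges each partition $\mathbb Z$, so $U$ is unitary. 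Since coordinates $\le -1$ receive input only from coordinates $\le -2$, $U^kh$ has vanishing negative coordinates whenever $h\in\mathcal H_0$. Hence $(U^{k+1}h)_0=T(U^kh)_0$, and $P_{\mathcal H}U^k|_{\mathcal H}=T^k$ follows by induction. With this correction the rest of your argument goes through: the intertwining $TD_T=D_{T^*}T$ via $Tp(T^*T)=p(TT^*)T$ and Weierstrass approximation, and adjoints for negative powers.
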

These two theorems form the foundation of modern dilation theory and have inspired analogous theories for many domains, including the symmetrized polydisc, the tetrablock, and the pentablock.
The Sz.-Nagy--Foia\c{s} theory further provides a functional model for completely non-unitary contractions; in particular, the characteristic function plays a central role in this model \cite{Nagy}. We briefly recall the relevant construction. Let $T$ be a contraction on a Hilbert space $\mathcal H$. Define the defect operators
\[
D_T=(I-T^*T)^{1/2},
\quad
D_{T^*}=(I-TT^*)^{1/2},
\]
with corresponding defect spaces
$
\mathcal D_T=\overline{\operatorname{Ran}}D_T,
\mathcal D_{T^*}=\overline{\operatorname{Ran}}D_{T^*}.$
The defect operators satisfy the intertwining relation
\[
TD_T=D_{T^*}T,
\]
and, equivalently,
\[
D_TT^*=T^*D_{T^*}.
\]
The \emph{characteristic function} $\Theta_T$ of $T$ is the operator-valued analytic function on $\mathbb D$ defined by
\begin{equation}\label{Characteristic}
\Theta_T(z)
=
\left.
\left(
-T+D_{T^*}(I-zT^*)^{-1}D_T
\right)
\right|_{\mathcal D_T},
\quad z\in\mathbb D.
\end{equation}	
It is immediate from the definition that
$
\Theta_T\in
H^\infty\bigl(\mathcal B(\mathcal D_T,\mathcal D_{T^*})\bigr).$
We define the multiplication operator
$
M_{\Theta_T}:
H^2(\mathbb D)\otimes\mathcal D_T
\longrightarrow
H^2(\mathbb D)\otimes\mathcal D_{T^*}$
by
\[
(M_{\Theta_T}f)(z)
=
\Theta_T(z)f(z),
\quad z\in\mathbb D.
\]
The corresponding model space is defined by
\[
\mathcal H_T
=
\bigl(H^2(\mathbb D)\otimes\mathcal D_{T^*}\bigr)
\ominus
M_{\Theta_T}
\bigl(H^2(\mathbb D)\otimes\mathcal D_T\bigr).
\]
We now recall the functional model for pure contractions; see \cite{Nagy}.
\begin{thm}\label{Pure Contraction Model}
Every pure contraction $T$ on a Hilbert space $\mathcal H$ is unitarily
equivalent to the operator
$
T_1
=
P_{\mathcal H_T}
\left.
(M_z\otimes I_{\mathcal D_{T^*}})
\right|_{\mathcal H_T}$
acting on the model space
$
\mathcal H_T
=
\bigl(H^2(\mathbb D)\otimes\mathcal D_{T^*}\bigr)
\ominus
M_{\Theta_T}
\bigl(H^2(\mathbb D)\otimes\mathcal D_T\bigr).$
\end{thm}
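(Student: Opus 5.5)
The plan is to build an explicit isometric embedding of $\mathcal H$ into $H^2(\mathbb D)\otimes\mathcal D_{T^*}$ that intertwines $T^*$ with $M_z^*\otimes I$, and then identify the orthogonal complement of its range with $M_{\Theta_T}(H^2(\mathbb D)\otimes\mathcal D_T)$.

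First, define
\[
V:\mathcal H\to H^2(\mathbb D)\otimes\mathcal D_{T^*},\qquad (Vh)(z)=D_{T^*}(I-zT^*)^{-1}h=\sum_{k\ge0}z^k\,D_{T^*}T^{*k}h .
\]
The norm computation telescopes:
\[
\|Vh\|^2=\sum_{k=0}^{N}\|D_{T^*}T^{*k}h\|^2+\|T^{*(N+1)}h\|^2=\|h\|^2 .
\]
Here I am reading ``pure'' as $T^{*k}\to0$ strongly, so the last term tends to zero and $V$ is an isometry. Comparing Taylor coefficients gives $V T^*=(M_z^*\otimes I)V$. It follows that $V\mathcal H$ is invariant under $M_z^*\otimes I$ and that $T\cong P_{V\mathcal H}(M_z\otimes I)|_{V\mathcal H}$. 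Thus it remains to show $V\mathcal H=\mathcal H_T$, i.e. $(V\mathcal H)^\perp=M_{\Theta_T}(H^2\otimes\mathcal D_T)$.

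Next I would check that $M_{\Theta_T}$ is an isometry and that $VV^*+M_{\Theta_T}M_{\Theta_T}^*=I$. Both operators commute appropriately with $M_z\otimes I$, namely $M_{\Theta_T}$ intertwines and $V^*$ intertwines with adjoints. So it suffices to test $\langle\cdot,\cdot\rangle$ on kernel vectors $k_w\otimes\xi$, where $k_w(z)=(1-z\bar w)^{-1}$. Since $V^*(k_w\otimes\xi)=(I-\bar wT)^{-1}D_{T^*}\xi$ and $M_{\Theta_T}^*(k_w\otimes\xi)=k_w\otimes\Theta_T(w)^*\xi$, the identity reduces to
\[
I-\Theta_T(w)\Theta_T(z)^* = (1-w\bar z)\,D_{T^*}(I-wT^*)^{-1}(I-\bar z T)^{-1}D_{T^*}
\]
on $\mathcal D_{T^*}$. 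This is the standard Sz.-Nagy--Foia\c{s} identity. I would verify it by expanding $\Theta_T(w)\Theta_T(z)^*$ with the defining formula \eqref{Characteristic} and using the intertwinings $TD_T=D_{T^*}T$, $D_TT^*=T^*D_{T^*}$ together with $D_{T^*}^2=I-TT^*$ and $D_T^2=I-T^*T$.

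Isometry of $M_{\Theta_T}$ is the analogous identity $\Theta_T(z)^*\Theta_T(w)$-type kernel computation. Alternatively, and more cleanly, $\Theta_T$ is inner because it is a pure contraction's characteristic function. I would derive this from the reproducing-kernel identity together with the fact that $\ker M_{\Theta_T}=0$ and that $I-VV^*$ is a projection, since $V$ is an isometry. Concretely, $M_{\Theta_T}M_{\Theta_T}^*=I-VV^*$ is a projection. Hence $M_{\Theta_T}$ is a partial isometry, and its injectivity follows because $\Theta_T(0)=-T|_{\mathcal D_T}$ together with the structure of boundary values shows no nonzero $f$ is annihilated.

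Conclusion: the range of $M_{\Theta_T}$ is $(I-VV^*)(H^2\otimes\mathcal D_{T^*})=(V\mathcal H)^\perp$, so $V\mathcal H=\mathcal H_T$. Then $V$ is the required unitary from $\mathcal H$ onto $\mathcal H_T$ implementing $T\cong T_1$. Only the range identification is needed, so injectivity of $M_{\Theta_T}$ is not essential. The main obstacle is the algebraic kernel identity in the second step, and I would handle it by the resolvent manipulations indicated.
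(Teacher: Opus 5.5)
Your proposal is correct and is essentially the standard Sz.-Nagy--Foia\c{s} argument that the paper cites for this statement and mirrors in the proof of Theorem~\ref{Functional Model}: the isometry $W$ (your $V$), the identity $WW^*+M_{\Theta_{T_n}}M_{\Theta_{T_n}}^*=I$ of Lemma~\ref{W isometry}, and $\operatorname{Ran}W=\mathcal H_{T_n}$; you also save a step by reading off $\operatorname{Ran}M_{\Theta_T}=(V\mathcal H)^\perp$ directly from the fact that $M_{\Theta_T}M_{\Theta_T}^*=I-VV^*$ is a projection, so you need neither the innerness of $\Theta_T$ (which the paper invokes) nor your unproved injectivity claim. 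One caution: your kernel identity and your value $\Theta_T(0)=-T|_{\mathcal D_T}$ are right for the standard characteristic function $\Theta_T(z)=\bigl(-T+zD_{T^*}(I-zT^*)^{-1}D_T\bigr)|_{\mathcal D_T}$, but \eqref{Characteristic} as printed omits the factor $z$, and with the printed formula both the identity and the theorem already fail for $T=0$, so expand the standard formula, not the printed one.
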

Agler and McCarthy introduced the notion of a distinguished variety in the bidisc in their seminal work \cite{AM}. They obtained an explicit description of distinguished varieties in $\mathbb D^2$ \cite[Theorem 1.12]{AM} and proved a von Neumann-type inequality on such varieties \cite[Theorem 3.1]{AM}. Subsequently, S.~Pal extended the notion of distinguished varieties to several important domains, including the symmetrized bidisc, the tetrablock, the symmetrized polydisc, and the polydisc; see \cite{S. Pal 4, S. Pal 5, S. Pal 1, S. Pal 2}. Motivated by these developments, we develop an analogous theory of distinguished varieties in the domain $\mathbf{\Theta}_n$.
Following \cite{Biswas 2}, we introduce the basic operator classes associated with $\mathbf{\Theta}_n$, namely, $\mathbf{\Theta}_n$-contractions, $\mathbf{\Theta}_n$-unitaries, $\mathbf{\Theta}_n$-isometries, $\mathbf{\Theta}_n$-co-isometries, and pure $\mathbf{\Theta}_n$-isometries.

\begin{defn}\label{Defition 1}
\begin{enumerate}
    \item Let $\mathbf{T}=(T_1,\ldots,T_n)$ be a commuting $n$-tuple of bounded operators on a Hilbert space $\mathcal H$. We say that $\mathbf{T}$ is a
    \emph{$\mathbf{\Theta}_n$-contraction} if $\overline{\mathbf{\Theta}}_n$ is a spectral set for $\mathbf{T}$.

    \item Let $\mathbf{N}=(N_1,\ldots,N_n)$ be a commuting $n$-tuple of bounded normal operators on a Hilbert space $\mathcal H$. We say that $\mathbf{N}$ is a
    \emph{$\mathbf{\Theta}_n$-unitary} if its joint spectrum is contained in the distinguished boundary $b\mathbf{\Theta}_n$.

    \item Let $\mathbf{V}=(V_1,\ldots,V_n)$ be a commuting $n$-tuple of bounded operators on a Hilbert space $\mathcal H$. We say that $\mathbf{V}$ is a
    \emph{$\mathbf{\Theta}_n$-isometry} if there exist a Hilbert space $\mathcal K\supseteq\mathcal H$ and a $\mathbf{\Theta}_n$-unitary
    $\mathbf{N}=(N_1,\ldots,N_n)$ on $\mathcal K$ such that
  $
    V_i=N_i|_{\mathcal H}, 1\leq i\leq n.$ 
    We say that $\mathbf{V}$ is a
    \emph{$\mathbf{\Theta}_n$-co-isometry} if
   $
    \mathbf{V}^*=(V_1^*,\ldots,V_n^*)$
    is a $\mathbf{\Theta}_n$-isometry. A $\mathbf{\Theta}_n$-isometry $\mathbf{V}$ is said to be a
    \emph{pure $\mathbf{\Theta}_n$-isometry} if $V_n$ is a pure isometry.
\end{enumerate}
\end{defn}	
One of the principal tools in the study of $\Gamma_n$-contractions is the theory of fundamental operators \cite{S. Pal, A. Pal}. These operators measure the deviation of a $\Gamma_n$-contraction from being a $\Gamma_n$-isometry and play a central role in the construction of functional models, canonical decompositions, and dilation theorems. This naturally raises the question of whether an analogous theory can be developed for $\mathbf{\Theta}_n$-contractions.
Recall that for a $\Gamma_n$-contraction
$(S_1,\ldots,S_n)$, the fundamental equations are
\begin{equation}\label{Gamma_n Fundamental}
\begin{aligned}
S_i-S_{n-i}^*S_n
=D_{S_n}E_iD_{S_n},
S_{n-i}-S_i^*S_n
=D_{S_n}E_{n-i}D_{S_n},
\quad E_i,E_{n-i}\in\mathcal B(\mathcal D_{S_n}),
\end{aligned}
\end{equation}
for $1\leq i\leq n-1$.
Motivated by these equations, we introduce the following system of operator equations for a $\mathbf{\Theta}_n$-contraction $\mathbf T=(T_1,\ldots,T_n)$:
\begin{equation}\label{Fundamental}
\begin{aligned}
T_i-T_{n-i}^*T_n^p
={}&
\sum_{l=0}^{p-1}
T_n^{*\,p-1-l}
D_{T_n}A_l^{(i)}D_{T_n}T_n^{p-1}
+
\sum_{l=0}^{p-2}
T_n^{*\,p-2-l}
D_{T_n}A_l^{(i)}D_{T_n}T_n^{p-2}
+\cdots
\\
&+
\sum_{l=0}^{1}
T_n^{*\,1-l}
D_{T_n}A_l^{(i)}D_{T_n}T_n
+
D_{T_n}A_0^{(i)}D_{T_n},
\quad 1\leq i\leq n-1.
\end{aligned}
\end{equation}
We refer to \eqref{Fundamental} as the \emph{fundamental equations} of $\mathbf T$, and
\[
(A_0^{(i)},\ldots,A_p^{(i)}),
\quad 1\leq i\leq n-1,
\]
as the \emph{tuples of fundamental operators} associated with $\mathbf T$.
When $m=p=1$, the domain $\mathbf{\Theta}_n$ coincides with the symmetrized polydisc, and \eqref{Fundamental} reduces to the usual fundamental equations for $\Gamma_n$-contractions. It was shown in \cite[Theorem 4.4]{A. Pal} that the fundamental operators associated with a $\Gamma_n$-contraction are uniquely determined. This naturally leads to the following question for the broader class of $\mathbf{\Theta}_n$-contractions: are the tuples
\[
(A_0^{(i)},\ldots,A_p^{(i)}),
\qquad 1\leq i\leq n-1,
\]
uniquely determined by $\mathbf T$? To the best of our knowledge, this problem remains open.

 Let $\mathcal E$ be a separable Hilbert space, and let $\mathcal B(\mathcal E)$ denote the algebra of bounded linear operators on $\mathcal E$. We write $H^2(\mathcal E)$ for the Hardy space of analytic $\mathcal E$-valued functions on the unit disc $\mathbb D$, and $L^2(\mathcal E)$ for the Hilbert space of square-integrable $\mathcal E$-valued functions on the unit circle $\mathbb T$. Furthermore, $H^\infty(\mathcal B(\mathcal E))$ and $L^\infty(\mathcal B(\mathcal E))$ denote the spaces of bounded analytic and bounded measurable $\mathcal B(\mathcal E)$-valued functions, respectively. For $\phi\in L^\infty(\mathcal B(\mathcal E))$, the Toeplitz operator with symbol $\phi$ is defined by \[ T_\phi f=P_+(\phi f), \qquad f\in H^2(\mathcal E), \] where $P_+:L^2(\mathcal E)\to H^2(\mathcal E)$ is the orthogonal projection. In particular, $T_z=M_z$ is the unilateral shift and $T_{\bar z}=M_z^*$ is the backward shift on $H^2(\mathcal E)$. For convenience, throughout the paper we write
	\[
	k(i)=\binom{n-1}{i}+\binom{n-1}{n-i},
	\quad
	\gamma_i=\frac{n-i}{n},\quad 1\leq i \leq n-1.
	\]
We recall some basic notions from algebraic geometry that will be used in our work.
\begin{defn}
An \textit{affine algebraic curve}, or simply an \textit{algebraic curve}, in the affine space $\mathbb A_{\mathbb C}^n$ is an algebraic set of dimension one.
\end{defn}
\begin{defn}
An affine algebraic set $V$ of dimension $k\leq n$ in $\mathbb A_{\mathbb C}^n$ is called a \textit{set-theoretic complete intersection}, or simply a \textit{complete intersection}, if $V$ is defined by $n-k$ polynomials in $\mathbb C[z_1,\ldots,z_n]$.
In particular, an algebraic curve $C$ in $\mathbb A_{\mathbb C}^n$ is a complete intersection if $C$ is defined by $n-1$ polynomials in $\mathbb C[z_1,\ldots,z_n]$.
\end{defn}
In general, the notions of a set-theoretic complete intersection and a complete intersection need not coincide. In this paper, however, we work only with Zariski-closed sets and use the term ``complete intersection'' in the set-theoretic sense.
\begin{defn}\label{Regular Sequence}
Let $M$ be an $R$-module. An element $a\in R$ is called an $M$-\textit{regular element} (or a \textit{non-zero-divisor} on $M$) if
$
ax=0,\quad x\in M,$
implies $x=0$. A sequence
$
a_0,\ldots,a_m,\quad m\geq 0,$
of elements of $R$ is called an $M$-\textit{regular sequence} if
\begin{itemize}
    \item[(i)] $M\neq (a_0,\ldots,a_m)M$;
    \item[(ii)] for each $i=0,\ldots,m-1$, the element $a_{i+1}$ is a non-zero-divisor on
   $
    M/(a_0,\ldots,a_i)M.$
\end{itemize}
\end{defn}
\begin{thm}\label{Regular iff Complete Intersection}
Let $(R,\mathcal M)$ be a Noetherian local Cohen--Macaulay ring, and let
$
a_0,\ldots,a_m\in\mathcal M.$
Set
$
I=(a_0,\ldots,a_m).$
Then the following statements are equivalent:
\begin{itemize}
    \item[(i)] The sequence $a_0,\ldots,a_m$ is an $R$-regular sequence.
    
    \item[(ii)] $I$ is a complete intersection ideal of height $m+1$.
\end{itemize}
\end{thm}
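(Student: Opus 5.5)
The approach is the standard one from commutative algebra: use the Cohen--Macaulay property to identify height with the length of a maximal regular sequence in $I$. The key ingredients are Krull's height theorem and the fact that in a Cohen--Macaulay local ring $R$, for every proper ideal $J$,
\[
\operatorname{grade}(J,R)=\operatorname{ht} J ,
\]
where $\operatorname{grade}(J,R)$ is the common length of maximal $R$-regular sequences contained in $J$. In a local ring, a sequence of elements of $\mathcal M$ is regular if and only if every permutation of it is, and condition (i) of Definition~\ref{Regular Sequence} holds automatically by Nakayama's lemma. Throughout, ``complete intersection ideal of height $m+1$'' means that $I$ is generated by $m+1$ elements and $\operatorname{ht} I=m+1$; since $I$ is generated by the $m+1$ elements $a_0,\dots,a_m$, (ii) amounts to $\operatorname{ht} I=m+1$.

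\textbf{(i)$\Rightarrow$(ii).} Suppose $a_0,\dots,a_m$ is $R$-regular. I will show by induction on $k$ that
\[
\operatorname{ht}(a_0,\dots,a_k)=k+1 .
\]
The upper bound $\le k+1$ is Krull's height theorem. For the lower bound, set $R_k=R/(a_0,\dots,a_{k-1})$, which is again Cohen--Macaulay of dimension $\dim R-k$ by the induction hypothesis. Since $a_k$ is a non-zero-divisor on $R_k$, it lies outside every associated prime of $R_k$. As $R_k$ is Cohen--Macaulay, it has no embedded primes, and all minimal primes $\mathfrak p$ satisfy $\dim R_k/\mathfrak p=\dim R_k$. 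Hence $\dim R_k/a_kR_k=\dim R_k-1$. Using $\operatorname{ht} J+\dim R/J=\dim R$, which is valid in a Cohen--Macaulay local ring, the height increases by exactly one, completing the induction. (Alternatively, one can simply cite $\operatorname{grade}\le\operatorname{ht}$, which holds in any Noetherian ring.)

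\textbf{(ii)$\Rightarrow$(i).} Suppose $\operatorname{ht} I=m+1$. I will prove by induction on $k$ that $a_0,\dots,a_k$ is regular and that $\operatorname{ht}(a_0,\dots,a_k)=k+1$.
\begin{itemize}
\item First, each partial ideal $I_k=(a_0,\dots,a_k)$ has height exactly $k+1$. If $\operatorname{ht} I_k\le k$, then by Krull, adding the remaining $m-k$ generators would give $\operatorname{ht} I\le m$, a contradiction. This step needs the same dimension formula for the quotient $R/I_k$, or equivalently a prime-avoidance count.
\item Next, suppose $a_0,\dots,a_{k-1}$ is regular, so $R_k=R/I_{k-1}$ is Cohen--Macaulay with $\dim R_k=\dim R-k$. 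Its associated primes are exactly its minimal primes, each of dimension $\dim R-k$.
\item If $a_k$ were a zero-divisor on $R_k$, it would lie in some minimal prime $\mathfrak p$ of $I_{k-1}$. Then $\mathfrak p\supseteq I_k$ with $\operatorname{ht}\mathfrak p=k$, contradicting $\operatorname{ht} I_k=k+1$.
\end{itemize}
Hence $a_k$ is regular on $R_k$. Condition (i) of Definition~\ref{Regular Sequence} holds because $I\subseteq\mathcal M$, by Nakayama.

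The main obstacle is the unmixedness step: that a Cohen--Macaulay local ring, and each successive quotient by a regular sequence, has only minimal associated primes, all of the same dimension. This is what allows height counts to detect non-zero-divisors. I would quote it from a standard reference (e.g.\ Matsumura or Bruns--Herzog) rather than reprove it, since the paper states Theorem~\ref{Regular iff Complete Intersection} as background.
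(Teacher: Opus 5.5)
The paper gives no proof of this statement; it quotes it from Kunz (Chapter V, Corollary 5.13). Your argument is the standard textbook proof of that fact, and it is correct: Krull's height theorem for the upper bounds, the Cohen--Macaulay identity $\operatorname{ht}J+\dim R/J=\dim R$ to pin down $\operatorname{ht}(a_0,\dots,a_k)=k+1$, and unmixedness of $R/(a_0,\dots,a_{k-1})$ to show that $a_k$ is a non-zero-divisor. You are also right that the first step of (ii)$\Rightarrow$(i) genuinely needs this dimension formula rather than bare Krull, since in a non-equidimensional ring adjoining one generator can raise the height by more than one, whereas (i)$\Rightarrow$(ii) already holds in any Noetherian ring via $\operatorname{grade}\le\operatorname{ht}$.
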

This is a standard result in commutative algebra. A proof may be found, for example, in \cite[Corollary 5.13, Chapter V]{Kunj}.

The paper is organized as follows. In Section~\ref{Representation}, we
characterize distinguished varieties in $\mathbf{\Theta}_n$ and prove one
of the main results of the paper, namely, a determinantal representation
for distinguished varieties. Section~\ref{Dilation and Model} is devoted
to the construction of minimal $\mathbf{\Theta}_n$-isometric dilations
and functional models for a special class of pure
$\mathbf{\Theta}_n$-contractions. Finally, in
Section~\ref{von Neumann Inequality}, we establish a von Neumann-type
inequality for a certain class of $\mathbf{\Theta}_n$-contractions on
distinguished varieties.

\section{Determinantal Representation of Distinguished Varieties in
$\mathbf{\Theta}_n$}\label{Representation}

A function $f$ is said to be \emph{holomorphic on} a distinguished variety
$\Omega$ in $\mathbf{\Theta}_n$ if, for every point $\lambda\in\Omega$,
there exist an open ball $U\subseteq\mathbb C^n$ containing $\lambda$ and
a function $g$ holomorphic on $U$ such that
\[
g|_{\Omega\cap U}=f|_{\Omega\cap U}.
\]
We denote by $\mathcal A(\Omega)$ the Banach algebra of functions that are
holomorphic on $\Omega$ and continuous on $\overline{\Omega}$.
Let $\mu$ be a finite measure on $\partial\Omega$. We denote by
$H^2(\mu)$ the closure of the polynomials in $L^2(\partial\Omega,\mu)$.
More generally, let $U$ be an open subset of a Riemann surface $S$, and
let $\mu_1$ be a finite measure on $\partial U$. We denote by
$A^2(\mu_1)$ the closure of $\mathcal A(U)$ in
$L^2(\partial U,\mu_1)$.

A point $\lambda$ is called a \emph{bounded point evaluation} for
$H^2(\mu)$, or for $A^2(\mu_1)$, if evaluation at $\lambda$, initially
defined on a dense set of analytic functions, extends continuously to the
whole Hilbert space $H^2(\mu)$, or $A^2(\mu_1)$, respectively.
Suppose that $\lambda$ is a bounded point evaluation. By the Riesz
representation theorem, there exists a unique vector $k_\lambda$ in the
corresponding Hilbert space such that
\[
f(\lambda)=\langle f,k_\lambda\rangle
\]
for every $f$ in that Hilbert space. The vector $k_\lambda$ is called the
\emph{reproducing kernel} or \emph{evaluation vector} at $\lambda$.

The following result is due to Agler and McCarthy; see
\cite[Lemma 1.1]{AM}.

\begin{lem}\label{Lem AM}
Let $S$ be a compact Riemann surface, and let $G\subseteq S$ be a domain
whose boundary is a finite union of piecewise smooth Jordan curves. Then
there exists a finite measure $\nu$ on $\partial G$ such that every point
$\lambda\in G$ is a bounded point evaluation for $A^2(\nu)$, and the
linear span of the corresponding evaluation vectors is dense in
$A^2(\nu)$.
\end{lem}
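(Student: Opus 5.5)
The strategy is to build the measure $\nu$ from harmonic measure for $G$ on a slightly larger open region of $S$. The structure of the argument is: construct a reproducing kernel Hilbert space on $G$ out of boundary values, and then get density of the kernels from the uniqueness of holomorphic functions.

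\emph{Choice of $\nu$.} The boundary $\partial G$ is a finite union of piecewise smooth Jordan curves, and $G\neq S$ because $G$ has nonempty boundary. So $G$ is a bordered domain in the compact surface $S$, and its Dirichlet problem is solvable. Fix a base point $\lambda_0\in G$ and let $\omega=\omega_{\lambda_0}$ be harmonic measure for $G$ at $\lambda_0$. I would take $\nu=\omega$. If it is more convenient, one can instead take $\nu$ to be arc length with respect to a smooth metric on $S$; because the boundary is piecewise smooth, the two measures are mutually boundedly absolutely continuous away from the finitely many corners. By Harnack's inequality, for each compact $K\subset G$ the harmonic measures $\omega_\lambda$, $\lambda\in K$, are comparable to $\omega$ with constants depending only on $K$.

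\emph{Bounded point evaluations.} Take $f\in\mathcal A(G)$ and $\lambda\in G$. Since $|f|^2$ is subharmonic and continuous on $\overline G$,
\[
|f(\lambda)|^2\le \int_{\partial G}|f|^2\,d\omega_\lambda\le C_\lambda\int_{\partial G}|f|^2\,d\nu .
\]
Hence evaluation at $\lambda$ extends continuously to the closure $A^2(\nu)$. The same estimate is locally uniform in $\lambda$. Consequently every element of $A^2(\nu)$ defines a holomorphic function on $G$: it is a locally uniform limit of holomorphic functions. Two points need care. First, the extension must be well defined, which requires that the boundary data determine the function. If $f_k\to 0$ in $L^2(\nu)$, then the estimate above gives $f_k\to0$ locally uniformly on $G$, so the limit function on $G$ is zero, and this is consistent. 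Second, one should note that an $L^2$-limit is assigned a single holomorphic extension. This gives the reproducing kernels $k_\lambda$.

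\emph{Density of the kernels.} Suppose $g\in A^2(\nu)$ is orthogonal to every $k_\lambda$. Then $g(\lambda)=\langle g,k_\lambda\rangle=0$ for all $\lambda\in G$, so the holomorphic function on $G$ attached to $g$ vanishes identically. It remains to conclude that $g=0$ as an element of $L^2(\nu)$, and I expect this to be the main obstacle: it is an injectivity statement for the map from $A^2(\nu)$ to holomorphic functions on $G$. I would prove it by showing that the interior function has nontangential boundary values equal to $g$ $\nu$-a.e. To do this, approximate $g$ by $f_k\in\mathcal A(G)$ in $L^2(\nu)$. Near each smooth boundary arc, use a local conformal chart that maps a one-sided neighbourhood of the arc onto a half-disc; piecewise smoothness allows this except at finitely many points, which have $\nu$-measure zero. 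In such a chart, $\omega$ is comparable to arc length, and the $f_k$ form a Cauchy sequence in a local Hardy space $H^2$. The classical Hardy space theory on the disc then shows that the boundary limit of the interior limit function coincides with the $L^2$ limit of the $f_k$, namely $g$. Since the interior function is zero, $g=0$ $\nu$-a.e. This proves that the span of the evaluation vectors is dense.
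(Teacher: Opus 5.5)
The paper does not prove this lemma. It imports it from Agler--McCarthy \cite[Lemma 1.1]{AM} and uses it as a black box in Lemma~\ref{Lem 1}, so your argument has to stand on its own, and it does.

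Your proof is correct and self-contained:
\begin{enumerate}
\item With $\nu=\omega_{\lambda_0}$, subharmonicity of $|f|^2$ together with Harnack's inequality gives locally uniform bounds on point evaluations. Hence every $g\in A^2(\nu)$ determines a holomorphic $\hat g$ on $G$.
\item Reducing density of the kernels to injectivity of $g\mapsto\hat g$ is exactly right.
\item The local Hardy-space argument near smooth boundary arcs settles that injectivity.
\end{enumerate}
Compared with the bare citation, your route shows where each hypothesis enters. The point evaluations only need $G$ to be Dirichlet-regular. The hypothesis on $\partial G$ is used only in the density step.

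\emph{Main point: the local Cauchy claim needs justification.} Let $V\subset G$ be a one-sided neighbourhood of a subarc $\gamma'\subset\partial G$. The inner part $\sigma$ of $\partial V$ has its endpoints on $\partial G$. So locally uniform convergence of the $f_k$ on $G$ does not control the contribution of $\sigma$ near those endpoints to the local $H^2$ norm. You can fix this with harmonic majorants. The function $u_{kj}(z)=\int_{\partial G}|f_k-f_j|^2\,d\omega_z$ is harmonic on $G$, continuous on $\overline G$, and majorizes $|f_k-f_j|^2$. Hence
\[
\int_{\partial V}|f_k-f_j|^2\,d\omega^V_{z_0}\le u_{kj}(z_0)\le C_{z_0}\,\|f_k-f_j\|^2_{L^2(\nu)} .
\]
Transport this by a Riemann map $\psi:\mathbb D\to V$ with $\psi(0)=z_0$. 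The $f_k\circ\psi$ are Cauchy in $H^2(\mathbb D)$, and their pointwise limit is $\hat g\circ\psi=0$. Therefore $f_k\to0$ in $L^2(\omega^V_{z_0})$. On compact subarcs of $\gamma'$, $\omega^V_{z_0}$ and $\nu$ are mutually absolutely continuous, since both are comparable to arc length there. Passing to an a.e.\ convergent subsequence then gives $g=0$ $\nu$-a.e.\ on $\gamma'$.

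\emph{Minor point: the arc-length alternative.} Your parenthetical claim that arc length could replace $\omega$ is not justified by comparability away from the corners. At a re-entrant corner $d\omega/ds$ is unbounded, so $\omega_\lambda\le C_\lambda\,ds$ fails there. You never use that alternative, so the proof is unaffected.
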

\begin{lem}\label{Lem 1}
Let $\Omega$ be a one-dimensional distinguished algebraic variety in
$\mathbf{\Theta}_n$. Then there exists a finite measure $\mu$ on
$\partial\Omega$ such that every point of $\Omega$ is a bounded point
evaluation for $H^2(\mu)$ and the linear span of the corresponding
evaluation vectors is dense in $H^2(\mu)$.
\end{lem}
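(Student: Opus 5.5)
The proof will follow Agler and McCarthy's argument for the bidisc \cite[Lemma 1.2]{AM}, transported through a desingularization of the curve. Let $V$ be the algebraic curve with $\Omega = V\cap\mathbf{\Theta}_n$, where $V$ exits through the distinguished boundary, i.e.\ $\overline{\Omega}\cap\partial\mathbf{\Theta}_n=\overline{\Omega}\cap b\mathbf{\Theta}_n$. Take the projective closure of $V$ and its normalization. This gives a compact Riemann surface $S$ (possibly disconnected, one component per irreducible component of $V$) together with a holomorphic map $\pi:S\to\overline{V}\subset\mathbb P^n$. The map $\pi$ is finite-to-one and injective off a finite set of singular preimages. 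Set $G=\pi^{-1}(\Omega)$.

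\textbf{Step 1: the boundary of $G$.} I would show that $\partial G$ is a finite union of piecewise smooth (indeed analytic, after a slight perturbation if needed) Jordan curves. Then Lemma~\ref{Lem AM} applies to each component of $G$ and produces a measure $\nu$ on $\partial G$. I would realize $G$ as a sublevel set. Take $\rho=\sup$ of the moduli of the roots of the polynomial $P$ in \eqref{P}; this is plurisubharmonic and continuous. On $b\mathbf{\Theta}_n$ all roots are unimodular. Because $\Omega$ is distinguished, $\partial G=\pi^{-1}(\overline{\Omega}\cap b\mathbf{\Theta}_n)$.

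A more concrete route is to use the coordinate $\theta_n$. On $b\mathbf{\Theta}_n$ one has $|\theta_n|=1$, while in $\mathbf{\Theta}_n$ one has $|\theta_n|<1$. Hence on the components of $G$ where $\theta_n\circ\pi$ is nonconstant, $G$ is a union of components of $\{s\in S: |\theta_n\circ\pi(s)|<1\}$, and this set is bounded by level curves of a meromorphic function. Such level curves are finite unions of piecewise analytic Jordan curves: there are finitely many critical points, and each critical point produces finitely many corners. One must also check that $\theta_n\circ\pi$ is nonconstant on each component meeting $\Omega$. If it were constant, that constant would have modulus less than $1$ on $\Omega$, and then the component would not reach $b\mathbf{\Theta}_n$, contradicting distinguishedness. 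I expect this step to be the main obstacle, because it needs both the distinguishedness hypothesis and care with the critical and singular points.

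\textbf{Step 2: pushing forward.} Define $\mu=\pi_*\nu$ on $\partial\Omega$. For a polynomial $f$, $f\circ\pi\in\mathcal A(G)$ and $\|f\|_{L^2(\mu)}=\|f\circ\pi\|_{L^2(\nu)}$. Hence $f\mapsto f\circ\pi$ extends to an isometry $J:H^2(\mu)\to A^2(\nu)$. For $\lambda\in\Omega$ pick $s\in\pi^{-1}(\lambda)$. Then
\[
|f(\lambda)|=|(f\circ\pi)(s)|\le \|k_s\|\,\|f\|_{H^2(\mu)},
\]
so $\lambda$ is a bounded point evaluation for $H^2(\mu)$, with $k_\lambda=J^*k_s$. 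This works equally well at singular points, since any single preimage suffices.

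\textbf{Step 3: density.} Suppose $h\in H^2(\mu)$ is orthogonal to every $k_\lambda$. Then $h(\lambda)=0$ for all $\lambda\in\Omega$, so $Jh$ vanishes on $G$ minus a finite set, hence on all of $G$ by continuity of point evaluations. The density of evaluation vectors in $A^2(\nu)$ (Lemma~\ref{Lem AM}) then gives $Jh=0$. Since $J$ is an isometry, $h=0$, and the evaluation vectors $k_\lambda$ span a dense subspace of $H^2(\mu)$.
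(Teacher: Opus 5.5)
Your proposal is correct and follows essentially the same route as the paper: desingularize the projective closure, apply Lemma~\ref{Lem AM} to the preimage of $\Omega$, push the measure forward, and use the isometry $f\mapsto f\circ\pi$ to transfer bounded point evaluations and density of the evaluation vectors (this is the paper's $U$, and its $k_\lambda=U^*P_{\mathcal M}k_x$ equals your $J^*k_s$). The one place you go beyond the paper is Step~1: you justify the boundary regularity of $\pi^{-1}(\Omega)$ through the sublevel set $\{|\theta_n\circ\pi|<1\}$ and the fact that $|\theta_n|=1$ on $b\mathbf{\Theta}_n$, whereas the paper simply asserts that this boundary is a finite union of piecewise smooth Jordan curves.
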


\begin{proof}
Let $f_1,\ldots,f_{n-1}$ be linearly independent polynomials such that
\[
\Omega
=
\left\{
(\theta_1,\ldots,\theta_n)\in\mathbf{\Theta}_n:
f_i(\theta_1,\ldots,\theta_n)=0,\;
1\leq i\leq n-1
\right\}.
\]
Let
$
Z_F=Z_{f_1}\cap\cdots\cap Z_{f_{n-1}}$
and let $C(Z_F)$ denote the projective closure of $Z_F$ in
$\mathbb{CP}^n$. Let $S$ be a desingularization of $C(Z_F)$. Then $S$
is a compact Riemann surface, and there exists a holomorphic map
$
\phi:S\longrightarrow C(Z_F)$
which is biholomorphic from $S'$ onto $C(Z_F)'$, where $C(Z_F)'$
denotes the set of nonsingular points of $C(Z_F)$ and
$S'=\phi^{-1}(C(Z_F)')$.
Set
$
\mathcal A=\phi^{-1}(\Omega).$
Since $\Omega$ is a one-dimensional distinguished algebraic variety,
$\partial\mathcal A$ is a finite union of piecewise smooth Jordan curves,
possibly with finitely many singular points. Hence $\mathcal A$ satisfies
the hypotheses of Lemma~\ref{Lem AM}. Therefore, there exists a finite
measure $\mu'$ on $\partial\mathcal A$ such that every point of
$\mathcal A$ is a bounded point evaluation for $A^2(\mu')$ and the linear
span of the corresponding evaluation vectors is dense in
$A^2(\mu')$.
Define a measure $\mu$ on $\partial\Omega$ by
\[
\mu(E)=\mu'\bigl(\phi^{-1}(E)\bigr),
\]
for every Borel subset $E\subseteq\partial\Omega$. Consider the map
\[
U:H^2(\mu)\longrightarrow A^2(\mu'),
\quad
(Uf)(x)=f(\phi(x)).
\]
For $f\in H^2(\mu)$, we have
\[
\|Uf\|_{L^2(\mu')}^2
=
\int_{\partial\mathcal A}|f\circ\phi|^2\,d\mu'
=
\int_{\partial\Omega}|f|^2\,d\mu
=
\|f\|_{L^2(\mu)}^2.
\]
Thus $U$ is an isometry.
Fix $\lambda\in\Omega$ and choose $x\in\mathcal A$ such that
$
\phi(x)=\lambda.$
Since $x$ is a bounded point evaluation for $A^2(\mu')$, there exists
$k_x\in A^2(\mu')$ such that
\[
(Uf)(x)=\langle Uf,k_x\rangle_{A^2(\mu')}
\]
for every $f\in H^2(\mu)$. Hence
\[
f(\lambda)
=
f(\phi(x))
=
(Uf)(x)
=
\langle Uf,k_x\rangle.
\]
Let $\mathcal M=U(H^2(\mu))$. By the Riesz representation theorem,
there exists a unique $k_\lambda\in H^2(\mu)$ such that
\[
f(\lambda)=\langle f,k_\lambda\rangle_{H^2(\mu)}.
\]
Since $U$ is an isometry,
\[
k_\lambda=U^*P_{\mathcal M}k_x.
\]
Consequently, evaluation at every $\lambda\in\Omega$ is bounded on
$H^2(\mu)$.
It remains to prove the density of the evaluation vectors. Suppose that
$f\in H^2(\mu)$ is orthogonal to $k_\lambda$ for every $\lambda\in\Omega$.
Then
\[
(Uf)(x)
=
f(\phi(x))
=
0,
\qquad x\in\mathcal A.
\]
More generally, $Uf$ is orthogonal to every evaluation vector $k_x$,
$x\in\mathcal A$. By Lemma~\ref{Lem AM}, the linear span of these
evaluation vectors is dense in $A^2(\mu')$. Hence
\[
Uf=0.
\]
Since $U$ is an isometry, $f=0$. Therefore, the linear span of the
evaluation vectors $\{k_\lambda:\lambda\in\Omega\}$ is dense in
$H^2(\mu)$.
This completes the proof.
\end{proof}
\begin{lem}\label{Lem 2}
Let $\Omega$ be a one-dimensional distinguished algebraic variety in
$\mathbf{\Theta}_n$, and let $\mu$ be the measure on $\partial\Omega$
given by Lemma~\ref{Lem 1}. Then, for
$\boldsymbol{\theta}'=(\theta'_1,\ldots,\theta'_n)\in\mathbf{\Theta}_n$,
the following are equivalent:
\begin{enumerate}
    \item $\boldsymbol{\theta}'\in\Omega$;
    \item $\overline{\boldsymbol{\theta}'}=
    (\overline{\theta'_1},\ldots,\overline{\theta'_n})$ is a joint
    eigenvalue of
    $
    (M_{\theta_1}^*,\ldots,M_{\theta_n}^*).$
\end{enumerate}
\end{lem}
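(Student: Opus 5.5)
The proof mirrors the corresponding argument of Agler and McCarthy for the bidisc. Here $M_{\theta_i}$ is multiplication by the coordinate function $\theta_i$ on $H^2(\mu)$. It is bounded because $\partial\Omega\subseteq\overline{\mathbf{\Theta}}_n$ is compact, so $\theta_i$ is bounded there. The two implications are handled separately: (1)$\Rightarrow$(2) by the reproducing kernels of Lemma~\ref{Lem 1}, and (2)$\Rightarrow$(1) by evaluating the eigenvector on polynomials and using the defining polynomials of $\Omega$.

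\emph{(1)$\Rightarrow$(2).} Let $\boldsymbol{\theta}'\in\Omega$. By Lemma~\ref{Lem 1}, $\boldsymbol{\theta}'$ is a bounded point evaluation with kernel $k_{\boldsymbol{\theta}'}\in H^2(\mu)$. This kernel is nonzero, since $\langle 1,k_{\boldsymbol{\theta}'}\rangle=1$. For every polynomial $f$,
\[
\langle f, M_{\theta_i}^*k_{\boldsymbol{\theta}'}\rangle=\langle \theta_i f,k_{\boldsymbol{\theta}'}\rangle=\theta_i'f(\boldsymbol{\theta}')=\langle f,\overline{\theta_i'}k_{\boldsymbol{\theta}'}\rangle .
\]
Polynomials are dense in $H^2(\mu)$, so $M_{\theta_i}^*k_{\boldsymbol{\theta}'}=\overline{\theta_i'}k_{\boldsymbol{\theta}'}$ for each $i$. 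Hence $\overline{\boldsymbol{\theta}'}$ is a joint eigenvalue.

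\emph{(2)$\Rightarrow$(1).} Let $v\neq 0$ satisfy $M_{\theta_i}^*v=\overline{\theta_i'}v$ for $1\le i\le n$. Iterating the eigenvalue equation gives $\langle p,v\rangle=p(\boldsymbol{\theta}')\langle 1,v\rangle$ for every polynomial $p$. If $\langle 1,v\rangle=0$, then $v$ is orthogonal to all polynomials, so $v=0$, a contradiction. Normalize so that $\langle 1,v\rangle=1$; then $p\mapsto p(\boldsymbol{\theta}')$ is given by $\langle p,v\rangle$.

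Now let $f_1,\dots,f_{n-1}$ be the defining polynomials of $\Omega$ from the proof of Lemma~\ref{Lem 1}. Each $f_j$ vanishes on $Z_F\supseteq\overline\Omega\supseteq\partial\Omega$, so $f_j=0$ as an element of $L^2(\partial\Omega,\mu)$. Therefore $f_j(\boldsymbol{\theta}')=\langle f_j,v\rangle=0$. Thus $\boldsymbol{\theta}'\in Z_F\cap\mathbf{\Theta}_n=\Omega$.

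The main point to check is that the closure of $\Omega$ lies in $Z_F$. This holds because $Z_F$ is Zariski closed, hence closed in the Euclidean topology, so every point of $\partial\Omega$ satisfies $f_j=0$ and $f_j$ is the zero element of $H^2(\mu)$. I expect no other obstacle.
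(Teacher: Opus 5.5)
Your proof is correct, and your direction (1)$\Rightarrow$(2) is the same reproducing-kernel argument the paper uses. For (2)$\Rightarrow$(1) you take a genuinely different route.

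The paper takes a unit joint eigenvector $v$ and derives $|f(\boldsymbol{\theta}')|\leq\|M_f\|\leq\sup_{\overline{\Omega}}|f|$ for every polynomial $f$. From this it concludes that $\boldsymbol{\theta}'$ lies in the polynomially convex hull of $\overline{\Omega}$. It then invokes the polynomial convexity of $\overline{\Omega}$ (Theorem~\ref{Polynomial Convexity}) to get $\boldsymbol{\theta}'\in\overline{\Omega}\cap\mathbf{\Theta}_n=\Omega$.

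You instead normalize the eigenvector by $\langle 1,v\rangle=1$ and identify it with point evaluation at $\boldsymbol{\theta}'$ on polynomials. You then test it only against the defining polynomials $f_j$. These are the zero element of $H^2(\mu)$, because $\mu$ lives on $\partial\Omega\subseteq\overline{\Omega}\subseteq Z_F$.

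Your route needs only that $\Omega=Z_F\cap\mathbf{\Theta}_n$ for some algebraic (hence closed) set $Z_F$. Any finite family of defining polynomials works, not necessarily $n-1$ of them. It also bypasses the polynomial convexity of $\overline{\Omega}$ entirely.

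That is a real advantage here. The paper proves Theorem~\ref{Polynomial Convexity} only later, via the converse half of Theorem~\ref{Determinantal Representation}, which in turn relies on this lemma. So the paper's argument is circular as written, while yours is self-contained.

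Note also that the paper's own estimate, applied to $f=f_j$, already yields $f_j(\boldsymbol{\theta}')=0$. What is really needed is not full polynomial convexity but only this vanishing, and your argument establishes it directly.
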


\begin{proof}
Suppose first that
$\boldsymbol{\theta}'=(\theta'_1,\ldots,\theta'_n)\in\Omega$.
Since every point of $\Omega$ is a bounded point evaluation for
$H^2(\mu)$ by Lemma~\ref{Lem 1}, there exists a reproducing kernel
$k_{\boldsymbol{\theta}'}\in H^2(\mu)$ such that
\[
f(\boldsymbol{\theta}')
=
\langle f,k_{\boldsymbol{\theta}'}\rangle,
\quad
f\in H^2(\mu).
\]
For every multiplier $f$, the reproducing-kernel identity gives
\[
M_f^*k_{\boldsymbol{\theta}'}
=
\overline{f(\boldsymbol{\theta}')}k_{\boldsymbol{\theta}'}.
\]
In particular, taking $f=\theta_i$, $1\leq i\leq n$, we obtain
\[
M_{\theta_i}^*k_{\boldsymbol{\theta}'}
=
\overline{\theta_i(\boldsymbol{\theta}')}
k_{\boldsymbol{\theta}'}
=
\overline{\theta_i'}k_{\boldsymbol{\theta}'},
\quad
1\leq i\leq n.
\]
Thus
$
(\overline{\theta'_1},\ldots,\overline{\theta'_n})
$
is a joint eigenvalue of
$
(M_{\theta_1}^*,\ldots,M_{\theta_n}^*).$

Conversely, suppose that
$
(\overline{\theta'_1},\ldots,\overline{\theta'_n})$
is a joint eigenvalue of
$
(M_{\theta_1}^*,\ldots,M_{\theta_n}^*).$
Let $v\in H^2(\mu)$ be a unit joint eigenvector. Then
\[
M_{\theta_i}^*v=\overline{\theta'_i}v,
\quad
1\leq i\leq n.
\]
Consequently, for every polynomial
$f\in\mathbb C[z_1,\ldots,z_n]$,
$
M_f^*v
=
\overline{f(\boldsymbol{\theta}')}v.$
Hence
\begin{equation}\label{L 1}
f(\boldsymbol{\theta}')
=
\langle v,M_f^*v\rangle,
\quad
f\in\mathbb C[z_1,\ldots,z_n],
\end{equation}
where we use the convention that the inner product is linear in the
first variable.
It follows from \eqref{L 1} that
\[
\begin{aligned}
|f(\boldsymbol{\theta}')|
&\leq \|M_f^*v\| \\
&\leq \|M_f\| \\
&\leq
\sup_{\boldsymbol{\theta}\in\overline{\Omega}}
|f(\boldsymbol{\theta})|.
\end{aligned}
\]
Therefore
$
\boldsymbol{\theta}'
\in
\widehat{\overline{\Omega}},$
the polynomially convex hull of $\overline{\Omega}$.
By the polynomial convexity of $\overline{\Omega}$,
$
\widehat{\overline{\Omega}}=\overline{\Omega}.$
Thus
$
\boldsymbol{\theta}'\in\overline{\Omega}.$
Since, by assumption,
$\boldsymbol{\theta}'\in\mathbf{\Theta}_n$ and
$\Omega\subseteq\mathbf{\Theta}_n$ is a distinguished variety, we obtain
$
\boldsymbol{\theta}'\in\overline{\Omega}\cap\mathbf{\Theta}_n
=\Omega.$
This completes the proof.
\end{proof}
\begin{lem}\label{Lem 3}
Let $\Omega$ be a one-dimensional distinguished algebraic variety in
$\mathbf{\Theta}_n$, and let $\mu$ be the measure on $\partial\Omega$
given by Lemma~\ref{Lem 1}. Then the multiplication operator tuple
\[
(M_{\theta_1},\ldots,M_{\theta_{n-1}},M_{\theta_n})
\]
on $H^2(\mu)$, where $M_{\theta_i}$ denotes multiplication by the
$i$-th coordinate function, is a pure
$\mathbf{\Theta}_n$-isometry.
\end{lem}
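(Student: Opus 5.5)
The plan is to realize the tuple $(M_{\theta_1},\ldots,M_{\theta_n})$ on $H^2(\mu)$ as the restriction of a $\mathbf{\Theta}_n$-unitary, and then to check that $M_{\theta_n}$ is a pure isometry. For the unitary part, consider the multiplication tuple $\mathbf N=(N_{\theta_1},\ldots,N_{\theta_n})$ by the coordinate functions on $L^2(\partial\Omega,\mu)$. These operators are commuting and normal. The joint spectrum of a tuple of multiplication operators by continuous functions is contained in the closure of the joint range of those functions on $\operatorname{supp}\mu\subseteq\partial\Omega$. Since $\Omega$ is a distinguished variety, its boundary satisfies $\partial\Omega=\overline{\Omega}\cap b\mathbf{\Theta}_n\subseteq b\mathbf{\Theta}_n$. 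Hence $\sigma(\mathbf N)\subseteq b\mathbf{\Theta}_n$, so $\mathbf N$ is a $\mathbf{\Theta}_n$-unitary in the sense of Definition~\ref{Defition 1}. The space $H^2(\mu)$, being the closure of the polynomials, is jointly invariant under $\mathbf N$, and $N_{\theta_i}|_{H^2(\mu)}=M_{\theta_i}$. Therefore $(M_{\theta_1},\ldots,M_{\theta_n})$ is a $\mathbf{\Theta}_n$-isometry.

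It remains to show that $M_{\theta_n}$ is a pure isometry. It is an isometry because it is the restriction of $N_{\theta_n}$. That $N_{\theta_n}$ is unitary follows from the description of $b\mathbf{\Theta}_n$ as $\boldsymbol\theta(\mathbb T^n)$: on $b\mathbf{\Theta}_n$ we have $|\theta_n|=|z_1\cdots z_n|^q=1$, so $N_{\theta_n}$ is multiplication by a unimodular function. For purity, we need $M_{\theta_n}^{*k}\to0$ in the strong operator topology. I would test this on the reproducing kernels of Lemma~\ref{Lem 1}. For $\boldsymbol\theta'\in\Omega$, the identity from the proof of Lemma~\ref{Lem 2} gives
\[
M_{\theta_n}^{*k}k_{\boldsymbol\theta'}=\overline{\theta_n'}^{\,k}k_{\boldsymbol\theta'}.
\]
Since $\boldsymbol\theta'\in\mathbf{\Theta}_n$, we have $|\theta_n'|<1$, so this tends to $0$. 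The operators $M_{\theta_n}^{*k}$ are uniformly bounded by $1$, and by Lemma~\ref{Lem 1} the span of the kernels is dense in $H^2(\mu)$. A standard $\varepsilon/3$ argument then gives $M_{\theta_n}^{*k}h\to0$ for every $h\in H^2(\mu)$. Hence $M_{\theta_n}$ is a pure isometry, and the tuple is a pure $\mathbf{\Theta}_n$-isometry.

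The main obstacle is the spectral inclusion $\sigma(\mathbf N)\subseteq b\mathbf{\Theta}_n$. This needs two ingredients:
\begin{enumerate}
\item the definition of a distinguished variety, namely that $\overline{\Omega}$ meets $\partial\mathbf{\Theta}_n$ only in $b\mathbf{\Theta}_n$, so that $\partial\Omega\subseteq b\mathbf{\Theta}_n$;
\item the fact that the Taylor spectrum of a commuting normal tuple of multiplication operators equals the essential joint range of the symbols.
\end{enumerate}
Since $b\mathbf{\Theta}_n$ is closed, the essential range lies in it, which gives the inclusion. A minor additional point is whether $\Omega$ here denotes the variety intersected with $\mathbf{\Theta}_n$, so that its kernel points satisfy $|\theta_n'|<1$. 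This is how the notation is used in Lemma~\ref{Lem 2}.
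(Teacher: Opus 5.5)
Your proposal is correct and follows the paper's proof essentially step for step. The coordinate multiplication tuple on $L^2(\partial\Omega,\mu)$ is a $\mathbf{\Theta}_n$-unitary since $\partial\Omega\subseteq b\mathbf{\Theta}_n$, its restriction to the invariant subspace $H^2(\mu)$ is a $\mathbf{\Theta}_n$-isometry, and purity follows from $M_{\theta_n}^{*k}k_{\boldsymbol\theta'}=\overline{\theta_n'}^{\,k}k_{\boldsymbol\theta'}\to0$ together with the density of the kernels from Lemma~\ref{Lem 1}; your explicit check that $|\theta_n|=1$ on $b\mathbf{\Theta}_n$, so that $M_{\theta_n}$ really is an isometry, fills in a point the paper only asserts.
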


\begin{proof}
Consider the multiplication operators
$
(\widehat M_{\theta_1},\ldots,\widehat M_{\theta_n})$
on $L^2(\partial\Omega,\mu)$. Since each $\theta_i$ is a bounded
measurable function on $\partial\Omega$, each
$\widehat M_{\theta_i}$ is normal. Moreover,
$
\sigma(\widehat M_{\theta_1},\ldots,\widehat M_{\theta_n})
\subseteq \partial\Omega
\subseteq b\mathbf{\Theta}_n.$
Hence
$
(\widehat M_{\theta_1},\ldots,\widehat M_{\theta_n})$
is a $\mathbf{\Theta}_n$-unitary.
Since $H^2(\mu)$ is invariant under multiplication by the coordinate
functions, we have
\[
M_{\theta_i}
=
\widehat M_{\theta_i}|_{H^2(\mu)},
\quad 1\leq i\leq n.
\]
Therefore
$
(M_{\theta_1},\ldots,M_{\theta_n})$
is a $\mathbf{\Theta}_n$-isometry.
It remains to prove that this $\mathbf{\Theta}_n$-isometry is pure.
Fix $\boldsymbol{\theta}'=(\theta'_1,\ldots,\theta'_n)\in\Omega$.
By Lemma~\ref{Lem 2}, the vector $k_{\boldsymbol{\theta}'}$ is a joint
eigenvector of
$
(M_{\theta_1}^*,\ldots,M_{\theta_n}^*)$
corresponding to the eigenvalue
$
(\overline{\theta'_1},\ldots,\overline{\theta'_n}).$
In particular,
\[
(M_{\theta_n}^*)^k k_{\boldsymbol{\theta}'}
=
\overline{\theta'_n}^{\,k}k_{\boldsymbol{\theta}'},
\qquad k\geq0.
\]
Since $\boldsymbol{\theta}'\in\mathbf{\Theta}_n$, we have
$|\theta'_n|<1$. Consequently,
\[
(M_{\theta_n}^*)^k k_{\boldsymbol{\theta}'}
\longrightarrow 0
\qquad\text{as }k\to\infty.
\]
By Lemma~\ref{Lem 1}, the linear span of the evaluation vectors
$k_{\boldsymbol{\theta}'}$, $\boldsymbol{\theta}'\in\Omega$, is dense
in $H^2(\mu)$. Since $M_{\theta_n}$ is an isometry, its adjoint is a
contraction. Hence the above convergence on a dense subset implies
\[
(M_{\theta_n}^*)^k\longrightarrow0
\qquad\text{strongly on }H^2(\mu).
\]
Thus $M_{\theta_n}$ is a pure isometry. Therefore
$
(M_{\theta_1},\ldots,M_{\theta_n})$
is a pure $\mathbf{\Theta}_n$-isometry.
\end{proof}
We say that
$
\Phi=(\Phi_1,\ldots,\Phi_n)$
is a pointwise $\mathbf{\Theta}_n$-contraction if, for every
$z\in\mathbb D$, the operator tuple
$
(\Phi_1(z),\ldots,\Phi_n(z))$
has $\overline{\mathbf{\Theta}}_n$ as a spectral set.
\begin{lem}\label{Lem 4}
Let $\Phi_1,\ldots,\Phi_n\in
H^\infty(\mathcal B(\mathcal E))$ and let
$T_{\Phi_1},\ldots,T_{\Phi_n}$ be the corresponding multiplication
(or analytic Toeplitz) operators on $H^2(\mathcal E)$. Then
$
(T_{\Phi_1},\ldots,T_{\Phi_n})$
is a $\mathbf{\Theta}_n$-contraction if and only if
$
(\Phi_1,\ldots,\Phi_n)$
is a $\mathbf{\Theta}_n$-contraction pointwise on $\mathbb D$.
\end{lem}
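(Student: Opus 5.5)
The plan is to reduce the spectral-set condition for the Toeplitz tuple to the pointwise condition, using the multiplicative structure of analytic Toeplitz operators. The key identity is that for any polynomial $f$ in $n$ variables,
\[
f(T_{\Phi_1},\ldots,T_{\Phi_n})=T_{f(\Phi_1,\ldots,\Phi_n)},
\]
where $f(\Phi_1,\ldots,\Phi_n)(z)=f(\Phi_1(z),\ldots,\Phi_n(z))$. This holds because $T_{\Phi}T_{\Psi}=T_{\Phi\Psi}$ for analytic symbols. Commutativity of the $T_{\Phi_i}$ is equivalent to pointwise commutativity of the $\Phi_i(z)$, so both sides make sense under either hypothesis.

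We also use the standard fact that $\|T_\Psi\|=\sup_{z\in\mathbb D}\|\Psi(z)\|$ for $\Psi\in H^\infty(\mathcal B(\mathcal E))$. Since $\overline{\mathbf{\Theta}}_n$ is polynomially convex (it is the image of the polydisc closure under a proper polynomial map), the spectral-set condition can be tested on polynomials. Concretely, $\overline{\mathbf{\Theta}}_n$ is a spectral set for a commuting tuple if and only if $\|f(\mathbf T)\|\le\sup_{\overline{\mathbf{\Theta}}_n}|f|$ for all polynomials $f$. The spectral inclusion follows from this, and the bound passes to $\mathcal O(\overline{\mathbf{\Theta}}_n)$ by Oka--Weil approximation. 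We will use this reduction in both directions.

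\emph{Pointwise $\Rightarrow$ operator.} Fix a polynomial $f$. For each $z\in\mathbb D$ we have $\|f(\Phi(z))\|\le\|f\|_{\infty,\overline{\mathbf{\Theta}}_n}$. Taking the supremum over $z$ and using the norm identity gives $\|f(T_{\Phi})\|\le\|f\|_{\infty,\overline{\mathbf{\Theta}}_n}$.

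\emph{Operator $\Rightarrow$ pointwise.} We use the eigenvectors of the adjoints. For $z\in\mathbb D$, $e\in\mathcal E$, let $k_z$ be the Szeg\H{o} kernel. Then $T_\Psi^*(k_z\otimes e)=k_z\otimes\Psi(z)^*e$. Hence, for unit vectors $e,e'$,
\[
|\langle f(\Phi(z))e,e'\rangle|=\frac{|\langle f(T_\Phi)(k_z\otimes e),k_z\otimes e'\rangle|}{\|k_z\|^2}\le\|f(T_\Phi)\|,
\]
which gives $\|f(\Phi(z))\|\le\|f\|_{\infty,\overline{\mathbf{\Theta}}_n}$ for every polynomial $f$.

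The main obstacle is the spectral inclusion and the passage from polynomials to $\mathcal O(\overline{\mathbf{\Theta}}_n)$, that is, justifying the reduction above. We first handle the Taylor spectrum. The polynomial von Neumann bound means the unital homomorphism $f\mapsto f(\mathbf T)$ is contractive for the sup norm on $\overline{\mathbf{\Theta}}_n$. It therefore extends to the uniform algebra $P(\overline{\mathbf{\Theta}}_n)$, whose maximal ideal space is $\widehat{\overline{\mathbf{\Theta}}_n}=\overline{\mathbf{\Theta}}_n$. The Taylor spectrum lies in the polynomially convex hull of the joint spectrum with respect to the generated Banach algebra, hence $\sigma(\mathbf T)\subseteq\overline{\mathbf{\Theta}}_n$. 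Next we extend the norm bound. Functions in $\mathcal O(\overline{\mathbf{\Theta}}_n)$ are uniform limits of polynomials on a neighbourhood of $\overline{\mathbf{\Theta}}_n$ (Oka--Weil), and the Taylor functional calculus is continuous under such limits, so the bound passes to all of $\mathcal O(\overline{\mathbf{\Theta}}_n)$. Polynomial convexity of $\overline{\mathbf{\Theta}}_n$ itself is the one ingredient we would cite from \cite{Biswas 2,Ghosh}, or derive from the characterization via the zeros of \eqref{P}.
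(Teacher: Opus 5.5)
Your argument is correct and is essentially the paper's proof. You use $f(T_{\Phi})=T_{f(\Phi)}$ together with $\|T_\Psi\|=\sup_{z\in\mathbb D}\|\Psi(z)\|$ for one direction, the kernel vectors $k_z\otimes e$ for the other, and polynomial convexity of $\overline{\mathbf{\Theta}}_n$ to pass from polynomial estimates to the spectral-set condition; you also supply the spectral-inclusion and Oka--Weil details, and the pointwise commutativity of the $\Phi_i(z)$, which the paper only asserts.

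One small correction: your parenthetical reason for polynomial convexity does not work on its own, because images of polynomially convex sets under proper polynomial maps need not be polynomially convex ($z\mapsto z^2$ sends an arc of length slightly more than $\pi$ onto the whole circle). What makes it work here is the saturation $\boldsymbol{\theta}^{-1}(\overline{\mathbf{\Theta}}_n)=\overline{\mathbb D}^{\,n}$, which is exactly what the characterization via the zeros of \eqref{P} gives.
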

\begin{proof}
Suppose first that
$(\Phi_1,\ldots,\Phi_n)$ is a pointwise
$\mathbf{\Theta}_n$-contraction. Thus, for every $z\in\mathbb D$,
the operator tuple
$
\Phi(z):=(\Phi_1(z),\ldots,\Phi_n(z))$
has $\overline{\mathbf{\Theta}}_n$ as a spectral set. In particular,
for every polynomial $p\in\mathbb C[z_1,\ldots,z_n]$,
\[
\|p(\Phi_1(z),\ldots,\Phi_n(z))\|
\leq
\sup_{\boldsymbol{\theta}\in\overline{\mathbf{\Theta}}_n}
|p(\boldsymbol{\theta})|,
\quad z\in\mathbb D.
\]
Since the symbols $\Phi_1,\ldots,\Phi_n$ are analytic, the
corresponding multiplication operators commute and
\[
p(T_{\Phi_1},\ldots,T_{\Phi_n})
=
M_{p(\Phi_1,\ldots,\Phi_n)}.
\]
For an operator-valued analytic function
$\Psi\in H^\infty(\mathcal B(\mathcal E))$, we have
\[
\|M_\Psi\|
=
\sup_{z\in\mathbb D}\|\Psi(z)\|.
\]
Consequently,
\[
\begin{aligned}
\|p(T_{\Phi_1},\ldots,T_{\Phi_n})\|
&=
\sup_{z\in\mathbb D}
\|p(\Phi_1(z),\ldots,\Phi_n(z))\|\\
&\leq
\sup_{\boldsymbol{\theta}\in\overline{\mathbf{\Theta}}_n}
|p(\boldsymbol{\theta})|.
\end{aligned}
\]
Since $\overline{\mathbf{\Theta}}_n$ is polynomially convex, the
polynomial spectral-set inequality yields the corresponding
functional-calculus inequality for every
$f\in\mathcal O(\overline{\mathbf{\Theta}}_n)$. Moreover, the joint
spectrum of
$(T_{\Phi_1},\ldots,T_{\Phi_n})$ is contained in
$\overline{\mathbf{\Theta}}_n$. Hence
$\overline{\mathbf{\Theta}}_n$ is a spectral set for
$(T_{\Phi_1},\ldots,T_{\Phi_n})$. Thus
$
(T_{\Phi_1},\ldots,T_{\Phi_n})$
is a $\mathbf{\Theta}_n$-contraction.

Conversely, suppose that
$
(T_{\Phi_1},\ldots,T_{\Phi_n})$
is a $\mathbf{\Theta}_n$-contraction. Let $z\in\mathbb D$ be fixed.
For $e\in\mathcal E$, let $k_z$ denote the reproducing kernel of
$H^2(\mathbb D)$ at $z$. Then, for every
$\Psi\in H^\infty(\mathcal B(\mathcal E))$,
\[
M_\Psi^*(k_z\otimes e)
=
k_z\otimes\Psi(z)^*e.
\]
In particular, for every polynomial
$p\in\mathbb C[z_1,\ldots,z_n]$,
\[
p(T_{\Phi_1},\ldots,T_{\Phi_n})^*
(k_z\otimes e)
=
k_z\otimes
p(\Phi_1(z),\ldots,\Phi_n(z))^*e.
\]
Therefore,
\[
\begin{aligned}
\|p(\Phi_1(z),\ldots,\Phi_n(z))\|
&\leq
\|p(T_{\Phi_1},\ldots,T_{\Phi_n})\|\\
&\leq
\sup_{\boldsymbol{\theta}\in\overline{\mathbf{\Theta}}_n}
|p(\boldsymbol{\theta})|.
\end{aligned}
\]
Thus, for every $z\in\mathbb D$,
\[
\|p(\Phi_1(z),\ldots,\Phi_n(z))\|
\leq
\|p\|_{\infty,\overline{\mathbf{\Theta}}_n}.
\]
As $\overline{\mathbf{\Theta}}_n$ is polynomially convex, this
inequality implies that the Taylor joint spectrum of
$
(\Phi_1(z),\ldots,\Phi_n(z))$
is contained in $\overline{\mathbf{\Theta}}_n$. Hence
$\overline{\mathbf{\Theta}}_n$ is a spectral set for
$(\Phi_1(z),\ldots,\Phi_n(z))$.
Since $z\in\mathbb D$ was arbitrary,
$(\Phi_1,\ldots,\Phi_n)$ is a pointwise
$\mathbf{\Theta}_n$-contraction.
This completes the proof.
\end{proof}
We now establish a determinantal representation for distinguished
varieties in the domain $\mathbf{\Theta}_n$, which is one of the main
results of this paper.

\begin{thm}\label{Determinantal Representation}
Let
$
\left\{
A_l^{(i)}:
0\leq l\leq p,\;1\leq i\leq n-1
\right\}$
be a collection of square matrices of the same order satisfying
\[
A_l^{(i)}=A_{p-l}^{(n-i)*},
\quad
0\leq l\leq p,\quad 1\leq i\leq n-1.
\]
For $1\leq i\leq n-1$, define
\[
\Phi_i(z)=\sum_{l=0}^{p}A_l^{(i)}z^l.
\]
Consider the set
\begin{equation}\label{Omega}
\begin{aligned}
\Omega
=
\Big\{
(\theta_1,\ldots,\theta_n)\in\mathbf{\Theta}_n:
(\theta_1,\ldots,\theta_{n-1})
\in
\sigma_T\big(
\Phi_1(\theta_n),\ldots,\Phi_{n-1}(\theta_n)
\big)
\Big\}.
\end{aligned}
\end{equation}
Suppose that the matrices $A_l^{(i)}$ satisfy the following
conditions:
\begin{enumerate}
\item
$
\sum_{l=0}^{k}
[A_l^{(i)},A_{k-l}^{(j)}]
=0,
\quad
0\leq k\leq 2p,\quad
1\leq i,j\leq n-1,$
where $[A,B]=AB-BA$.

\item For every $|\theta_n|<1$ and every unit joint eigenvector
$v$ of
$
\big(
\Phi_1(\theta_n),\ldots,\Phi_{n-1}(\theta_n)
\big),$
there exists
$(\alpha_1,\ldots,\alpha_{n-1})\in\mathbb G_{n-1}$ such that
\[
\sum_{l=0}^{p}
\langle A_l^{(i)}v,v\rangle\theta_n^l
=
\alpha_i+
\overline{\alpha}_{n-i}\theta_n^p,
\quad
1\leq i\leq n-1.
\]

\item The polynomials
$
f_i(\theta_1,\ldots,\theta_n)
=
\det\big(\Phi_i(\theta_n)-\theta_iI\big),
1\leq i\leq n-1,$
form a regular sequence.

\item The complex algebraic set generated by $
f_1,\ldots,f_{n-1}$
is irreducible.
\end{enumerate}
Then $\Omega$ is a distinguished variety in $\mathbf{\Theta}_n$.
Moreover, $\Omega$ is contained in an affine algebraic curve which is a
set-theoretic complete intersection.

Conversely, every distinguished variety $\Omega$ in
$\mathbf{\Theta}_n$ is contained in an affine algebraic curve lying in
$\mathbf{\Theta}_n$ which is a complete intersection, and $\Omega$
admits a representation of the form \eqref{Omega}, where the matrices
$A_l^{(i)}$ are square matrices of the same order satisfying
conditions $(1)$--$(4)$.
\end{thm}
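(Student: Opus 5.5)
We split the argument into the two directions and model it on the Agler--McCarthy proof for $\mathbb D^2$ and on Pal's treatment of $G_n$, using Lemmas~\ref{Lem 1}--\ref{Lem 4}.

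\emph{Forward direction.} First, condition (1) says exactly that the coefficients of $z^k$ in $[\Phi_i(z),\Phi_j(z)]$ vanish, so $\Phi_1(\theta_n),\ldots,\Phi_{n-1}(\theta_n)$ commute for every $\theta_n$. Their Taylor joint spectrum is therefore the set of joint eigenvalues, which can be read off from a simultaneous upper-triangular form. This shows that $\Omega$ is contained in the zero set $Z_F=\{f_1=\cdots=f_{n-1}=0\}$. By conditions (3)--(4) and Theorem~\ref{Regular iff Complete Intersection}, applied in the localizations, $Z_F$ is an irreducible affine curve that is a set-theoretic complete intersection. Since $\Omega$ is cut out by the finitely many polynomial conditions defining the joint spectrum, $\Omega$ is $Z_F\cap\mathbf{\Theta}_n$, or a union of components of it. Next, for a point $(\theta_1,\ldots,\theta_n)\in\Omega$ take a unit joint eigenvector $v$, so that $\theta_i=\langle\Phi_i(\theta_n)v,v\rangle$. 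Condition (2) then gives
\[
\theta_i=\alpha_i+\overline{\alpha}_{n-i}\theta_n^p,\qquad (\alpha_1,\ldots,\alpha_{n-1})\in\mathbb G_{n-1}.
\]
Using the known characterization of $\overline{\mathbf{\Theta}}_n$ and $b\mathbf{\Theta}_n$ through such $(\alpha,\theta_n^p)$ representations (the analogue of the $\Gamma_n$ criterion $s_i=s_{n-i}^{\,*}$-type relations), this places the point in $\mathbf{\Theta}_n$ whenever $|\theta_n|<1$.

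For distinguishedness we must show that $\overline{Z_F}\cap\partial\mathbf{\Theta}_n\subseteq b\mathbf{\Theta}_n$, that is, that $Z_F$ exits through the distinguished boundary. Take a limit point with $|\theta_n|=1$. The symmetry $A_l^{(i)}=A_{p-l}^{(n-i)*}$ gives
\[
\Phi_{n-i}(\theta_n)=\theta_n^p\,\Phi_i(\theta_n)^*\qquad\text{for }|\theta_n|=1.
\]
Consequently $\theta_{n-i}=\overline{\theta_i}\,\theta_n^p$ at such points. Together with membership in $\overline{\mathbf{\Theta}}_n$, obtained by taking limits of condition (2), this is the defining criterion for $b\mathbf{\Theta}_n$. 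Limit points with $|\theta_n|<1$ stay in $\mathbf{\Theta}_n$ by the interior argument above, so $\overline\Omega\cap\partial\mathbf{\Theta}_n\subseteq b\mathbf{\Theta}_n$.

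\emph{Converse.} Given a distinguished variety $\Omega$, take its defining polynomials and the measure $\mu$ of Lemma~\ref{Lem 1}. By Lemma~\ref{Lem 3}, $(M_{\theta_1},\ldots,M_{\theta_n})$ on $H^2(\mu)$ is a pure $\mathbf{\Theta}_n$-isometry. The model theory for pure $\mathbf{\Theta}_n$-isometries (the Wold-type model) identifies this tuple with $(T_{\Phi_1},\ldots,T_{\Phi_{n-1}},T_z)$ on $H^2(\mathcal E)$, where $\mathcal E=\ker M_{\theta_n}^*$ and the $\Phi_i$ are polynomials of degree $p$ whose coefficients are the fundamental operators $A_l^{(i)}$. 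The relations $A_l^{(i)}=A_{p-l}^{(n-i)*}$ and condition (1) come from the isometry relations $T_{\Phi_i}=T_{\Phi_{n-i}}^*T_z^p$ and from commutativity. The space $\mathcal E$ is finite-dimensional because $\Omega$ is a curve: $\theta_n$ is a finite-degree branched cover on $Z_F$, so the $A_l^{(i)}$ are square matrices. Lemma~\ref{Lem 2} identifies $\Omega$ with the set of points $\boldsymbol\theta'$ for which $\overline{\boldsymbol\theta'}$ is a joint eigenvalue of the adjoints. Passing to $H^2(\mathcal E)$ via kernels $k_{\theta_n}\otimes e$, this becomes $(\theta_1,\ldots,\theta_{n-1})\in\sigma_T(\Phi(\theta_n))$, which is \eqref{Omega}. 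Condition (2) follows from Lemma~\ref{Lem 4}, since the $\Phi$-tuple is pointwise a $\mathbf{\Theta}_n$-contraction and compressions to eigenvectors give the $\mathbb G_{n-1}$ representation. Conditions (3)--(4) can be arranged by replacing $\Omega$ by its containing irreducible complete-intersection curve, after reducing to irreducible components.

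\emph{Main obstacle.} We expect the hardest step to be the converse's claim that $\mathcal E$ is finite-dimensional and that the determinants $f_i$ cut out a complete intersection. Our first attempt would be to argue via the degree of $\theta_n$ on $Z_F$ and the density of kernels, and then invoke Theorem~\ref{Regular iff Complete Intersection} in the local rings. A second delicate point is the interior/boundary criterion for $\overline{\mathbf{\Theta}}_n$ used in the forward direction.
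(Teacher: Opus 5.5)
Your forward direction is sound and matches the paper's. Your boundary step is actually tighter than the paper's ``reflection'' remark. Limit points of $\Omega$ over $\mathbb D$ are still joint eigenvalues, so (2) keeps them in $\mathbf{\Theta}_n$. Limit points over $\mathbb T$ lie in $\overline{\mathbf{\Theta}}_n\cap\{|\theta_n|=1\}=\boldsymbol{\theta}(\mathbb T^n)=b\mathbf{\Theta}_n$, so the symmetry identity is not even needed there.

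The converse, however, contains a step that cannot work: conditions (3)--(4) cannot be ``arranged by replacing $\Omega$''. Once the model for the pure $\mathbf{\Theta}_n$-isometry of Lemma~\ref{Lem 3} produces the $A_l^{(i)}$, the polynomials $f_i=\det(\Phi_i(\theta_n)-\theta_iI)$ are fixed, and changing $\Omega$ changes what you are representing.

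Condition (3) does hold, but it needs an argument. Each $f_i$ is, up to sign, monic of degree $N$ in $\theta_i$ with coefficients in $\mathbb C[\theta_n]$. So $Z(f_1,\ldots,f_{n-1})$ has finite nonempty fibres over $\theta_n$ and is a curve. Regularity then follows from height $n-1$ and Theorem~\ref{Regular iff Complete Intersection}, which is the paper's route; monicity even gives it directly.

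Condition (4) is not attainable in general. Over each $\theta_n$, the set $Z(f_1,\ldots,f_{n-1})$ is the full product $\sigma(\Phi_1(\theta_n))\times\cdots\times\sigma(\Phi_{n-1}(\theta_n))$, which is typically larger than the joint spectrum. Take $m=p=1$, $n=3$ and $\Omega=\{\mathbf s(z,z,z):z\in\mathbb D\}$, with $\omega=e^{2\pi i/3}$.
\begin{itemize}
\item Any representing matrices satisfy $\sigma(\Phi_1(z^3))\ni 3z$ and $\sigma(\Phi_2(z^3))\ni 3(\omega z)^2$.
\item Hence $Z(f_1,f_2)$ contains $(3z,3\omega^2z^2,z^3)$.
\item For small $z\neq0$ this point lies in $G_3\setminus\Omega$, so by \eqref{Omega} it is not a joint eigenvalue.
\item The joint-eigenvalue locus is an infinite algebraic subset of the curve $Z(f_1,f_2)$, so $Z(f_1,f_2)$ must be reducible.
\end{itemize}
For the model matrices, $Z(f_1,f_2)=\{\theta_1^3=27\theta_3,\ \theta_2^3=27\theta_3^2\}$, which is the union of the three curves $\theta_2=\omega^k\theta_1^2/3$. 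What the paper's argument actually establishes is only that the irreducible curve containing $\Omega$ is a component of $Z(f_1,\ldots,f_{n-1})$.

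Two further ingredients of the converse are missing.

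First, you never prove that $\Omega$ is one-dimensional, although Lemmas~\ref{Lem 1}--\ref{Lem 3} require it. The paper obtains this from distinguishedness: a positive-dimensional fibre over some $\theta_n'\in\mathbb D$ would reach $\partial\mathbf{\Theta}_n$ at a point with $|\theta_n|=|\theta_n'|<1$, hence outside $b\mathbf{\Theta}_n$. It likewise shows that no component lies in a hyperplane $\theta_n=\mathrm{const}$. These facts are what make $\theta_n|_{\Omega}$ a proper finite branched cover of $\mathbb D$. Your ``branched cover on $Z_F$'' is the wrong object, since $\theta_n$ need not be proper on $Z_F$. It is the covering of $\mathbb D$ by $\Omega$, fed into the algebraic-dependence lemma, that gives $\dim\bigl(H^2(\mu)\ominus\theta_nH^2(\mu)\bigr)<\infty$.

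Second, condition (2) does not follow from Lemma~\ref{Lem 4} alone. That lemma only puts the joint eigenvalues of $(\Phi_1(z),\ldots,\Phi_{n-1}(z),zI)$ in $\overline{\mathbf{\Theta}}_n$. But $\alpha\in\mathbb G_{n-1}$ is an open condition that requires the point to lie in $\mathbf{\Theta}_n$ itself. You must exclude joint eigenvalues in $\partial\mathbf{\Theta}_n$ over $\mathbb D$, and this is again where distinguishedness is used: the paper shows such a point would lie in $\overline{\Omega}\cap(\partial\mathbf{\Theta}_n\setminus b\mathbf{\Theta}_n)$.
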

\begin{proof}
Let $\Omega$ be defined as in \eqref{Omega}, where the matrices
$A_l^{(i)}$ satisfy conditions $(1)$--$(4)$. By condition $(1)$, for
$1\leq i,j\leq n-1$ we have
$
[\Phi_i(z),\Phi_j(z)]=0,z\in\mathbb C.$
Indeed,
\[
\begin{aligned}
[\Phi_i(z),\Phi_j(z)]
&=
\left[
\sum_{l=0}^{p}A_l^{(i)}z^l,
\sum_{r=0}^{p}A_r^{(j)}z^r
\right]  \\
&=
\sum_{k=0}^{2p}
\left(
\sum_{l=0}^{k}
[A_l^{(i)},A_{k-l}^{(j)}]
\right)z^k
=0.
\end{aligned}
\]
Thus $\Phi_1(z),\ldots,\Phi_{n-1}(z)$ form a commuting family of
matrices for every $z\in\mathbb C$. Consequently,
\[
\sigma_T\big(\Phi_1(z),\ldots,\Phi_{n-1}(z)\big)\neq\varnothing.
\]
Let
\[
V_S
=
\left\{
(\theta_1,\ldots,\theta_n)\in\mathbb C^n:
f_i(\theta_1,\ldots,\theta_n)=0,\ 
1\leq i\leq n-1
\right\},
\]
where
\[
f_i(\theta_1,\ldots,\theta_n)
=
\det\big(\Phi_i(\theta_n)-\theta_iI\big).
\]
By conditions $(3)$ and $(4)$, the polynomials
$f_1,\ldots,f_{n-1}$ form a regular sequence and the corresponding
algebraic set is irreducible. Hence $V_S$ is an irreducible algebraic
set of dimension one; in particular, it is an affine algebraic curve. We next show that the part of $V_S$ lying over $\mathbb D$ is contained
in $\mathbf{\Theta}_n$. Let $\theta_n\in\mathbb D$ and suppose that
$
(\theta_1,\ldots,\theta_{n-1})
\in
\sigma_T\big(
\Phi_1(\theta_n),\ldots,\Phi_{n-1}(\theta_n)
\big).$
Since the matrices $\Phi_1(\theta_n),\ldots,\Phi_{n-1}(\theta_n)$
commute, there exists a non-zero common eigenvector $v$. Normalizing
$v$, we may assume that $\|v\|=1$. Thus
\[
\Phi_i(\theta_n)v=\theta_i v,
\quad 1\leq i\leq n-1.
\]
Taking the inner product with $v$ gives
\begin{equation}\label{R 1}
\sum_{l=0}^{p}
\langle A_l^{(i)}v,v\rangle\theta_n^l
=
\theta_i,
\quad 1\leq i\leq n-1.
\end{equation}
By condition $(2)$, there exists
$(\alpha_1,\ldots,\alpha_{n-1})\in\mathbb G_{n-1}$ such that
\[
\sum_{l=0}^{p}
\langle A_l^{(i)}v,v\rangle\theta_n^l
=
\alpha_i+\overline{\alpha}_{n-i}\theta_n^p,
\qquad 1\leq i\leq n-1.
\]
Therefore, by \eqref{R 1},
\[
\theta_i
=
\alpha_i+\overline{\alpha}_{n-i}\theta_n^p,
\quad
1\leq i\leq n-1.
\]
Since $\theta_n\in\mathbb D$ and
$(\alpha_1,\ldots,\alpha_{n-1})\in\mathbb G_{n-1}$, it follows from
\cite[Theorem 3.1]{Paul} that
$
(\theta_1,\ldots,\theta_n)\in\mathbf{\Theta}_n.$
Hence
$
V_S\cap(\mathbb C^{n-1}\times\mathbb D)
\subseteq \mathbf{\Theta}_n.$
Now, since $f_1,\ldots,f_{n-1}$ form a regular sequence, $V_S$ is a
set-theoretic complete intersection. Thus $V_S$ is an affine algebraic
curve, and
\[
\Omega=V_S\cap\mathbf{\Theta}_n
\]
is the portion of this algebraic curve contained in
$\mathbf{\Theta}_n$.
It remains to verify the distinguished boundary property. By the
description of $\mathbf{\Theta}_n$ and the symmetry condition
\[
A_l^{(i)}=A_{p-l}^{(n-i)*},
\]
the corresponding polynomial curve is invariant under the natural
reflection associated with the distinguished boundary. Consequently,
the closure of $\Omega$ can meet
$\partial\mathbf{\Theta}_n$ only on
$b\mathbf{\Theta}_n$. Hence
$
\overline{\Omega}\cap
\partial\mathbf{\Theta}_n
\subseteq
b\mathbf{\Theta}_n.$
Thus $\Omega$ is a distinguished variety in $\mathbf{\Theta}_n$.
Moreover, it is contained in the affine algebraic curve $V_S$, which
is a set-theoretic complete intersection.

For the converse, let $\Omega$ be a distinguished variety in
$\mathbf{\Theta}_n$. By definition, there exists an algebraic variety
$V_S\subseteq\mathbb C^n$, defined by a collection of polynomials
$S\subseteq\mathbb C[z_1,\ldots,z_n]$, such that
$
\Omega=V_S\cap\mathbf{\Theta}_n.$
We first show that $\dim V_S=1$.
Suppose, to the contrary, that
$
\dim V_S=k, 1<k\leq n.$
Since $V_S$ is an affine algebraic variety in $\mathbb C^n$, we may
choose polynomials $h_1,\ldots,h_n\in\mathbb C[z_1,\ldots,z_n]$
generating its defining ideal. Thus
\[
V_S=
\left\{
(\theta_1,\ldots,\theta_n)\in\mathbb C^n:
h_i(\theta_1,\ldots,\theta_n)=0,\ 
1\leq i\leq n
\right\},
\]
and consequently
$
\Omega
=
V_S\cap\mathbf{\Theta}_n.$
Fix a point
$
(\theta_1,\ldots,\theta_{n-1},\theta_n)\in\Omega.$
Since $\mathbf{\Theta}_n\subseteq\mathbb C^{n-1}\times\mathbb D$, we
have
$
|\theta_n|<1.$
For $\theta_n'\in\mathbb D$, define the fibre
\[
\Omega_{\theta_n'}
=
\left\{
(\theta_1,\ldots,\theta_n)\in\Omega:
\theta_n=\theta_n'
\right\}
\]
and
\[
V_{\theta_n'}
=
\left\{
(\theta_1,\ldots,\theta_n)\in V_S:
\theta_n=\theta_n'
\right\}.
\]
Then
$
\Omega_{\theta_n'}
=
V_{\theta_n'}\cap\mathbf{\Theta}_n.$
If every nonempty fibre $\Omega_{\theta_n'}$ is finite, then
\cite[Lemma 3.5]{S. Pal 1} implies that
$
\dim\Omega=1,$
contrary to our assumption that $\dim\Omega>1$. Hence there exists
$\theta_n'\in\mathbb D$ such that
$
\dim\Omega_{\theta_n'}\geq1.$
In particular,
$
\dim V_{\theta_n'}\geq1.$
Now, since
$
(\theta_1,\ldots,\theta_{n-1},\theta_n')\in\mathbf{\Theta}_n,$
by \cite[Theorem 3.1]{Paul} there exists
$
(\alpha_1,\ldots,\alpha_{n-1})\in\mathbb G_{n-1}$
such that
\[
\theta_i
=
\alpha_i+\overline{\alpha}_{n-i}(\theta_n')^p,
\quad
1\leq i\leq n-1.
\]
Define
$
\mathcal F:\mathbb C^{n-1}\longrightarrow\mathbb C^{n-1}$
by
\[
\mathcal F(\alpha_1,\ldots,\alpha_{n-1})
=
\left(
\alpha_1+\overline{\alpha}_{n-1}(\theta_n')^p,
\ldots,
\alpha_{n-1}+\overline{\alpha}_1(\theta_n')^p
\right).
\]
Since $|\theta_n'|<1$, the map $\mathcal F$ is an invertible
real-linear map on $\mathbb C^{n-1}\cong\mathbb R^{2(n-1)}$. Hence
$\mathcal F$ is a homeomorphism and an open map. Therefore
\[
\partial\mathcal F(\mathbb G_{n-1})
=
\mathcal F(\partial\mathbb G_{n-1}).
\]
Moreover, every point of the fibre $\Omega_{\theta_n'}$ belongs to
$\mathcal F(\mathbb G_{n-1})$. Indeed,
$
\Omega_{\theta_n'}
\subseteq
\mathcal F(\mathbb G_{n-1}).$
Since $V_{\theta_n'}$ has positive dimension while
$V_{\theta_n'}\cap\mathcal F(\mathbb G_{n-1})$ contains the
positive-dimensional set $\Omega_{\theta_n'}$, the algebraic variety
$V_{\theta_n'}$ cannot be contained in
$\mathcal F(\mathbb G_{n-1})$. Hence
\[
V_{\theta_n'}\cap
\partial\mathcal F(\mathbb G_{n-1})
\neq\varnothing.
\]
Choose
$
(\beta_1,\ldots,\beta_{n-1})
\in
V_{\theta_n'}\cap
\partial\mathcal F(\mathbb G_{n-1}).$
Since
$
\partial\mathcal F(\mathbb G_{n-1})
=
\mathcal F(\partial\mathbb G_{n-1}),$
there exists
$
(\alpha_1,\ldots,\alpha_{n-1})
\in\partial\mathbb G_{n-1}$
such that
\[
\beta_i
=
\alpha_i+
\overline{\alpha}_{n-i}(\theta_n')^p,
\quad
1\leq i\leq n-1.
\]
Again, by \cite[Theorem 3.1]{Paul},
$
(\beta_1,\ldots,\beta_{n-1},\theta_n')
\in
\partial\mathbf{\Theta}_n.$
Since $\theta_n'\in\mathbb D$, we have
$|\theta_n'|<1$. Hence, by
\cite[Theorem 2.5]{Biswas 2},
$
(\beta_1,\ldots,\beta_{n-1},\theta_n')
\notin b\mathbf{\Theta}_n.$
Consequently,
$
(\beta_1,\ldots,\beta_{n-1},\theta_n')
\in
\partial\mathbf{\Theta}_n
\setminus b\mathbf{\Theta}_n.$
Since this point belongs to the closure of the portion of the
algebraic variety inside $\mathbf{\Theta}_n$, we obtain
$\overline{\Omega}
\cap
\bigl(
\partial\mathbf{\Theta}_n\setminus b\mathbf{\Theta}_n
\bigr)
\neq\varnothing.$
This contradicts the assumption that $\Omega$ is a distinguished
variety in $\mathbf{\Theta}_n.$
Therefore, $
\dim V_S=1.$
Hence $V_S$ is an affine algebraic curve and
$
\Omega=V_S\cap\mathbf{\Theta}_n$
is a portion of an affine algebraic curve contained in
$\mathbf{\Theta}_n$.

As $\dim V_S=1$, the intersection of $V_S$ with the hyperplane
$\theta_n=\theta_n'$ is either finite or contains an irreducible
component of $V_S$. We claim that the latter possibility cannot occur.
Indeed, suppose that
$
V_S\cap\{\theta_n=\theta_n'\}$
contains an irreducible component of $V_S$. Then the corresponding
component of
\[
\Omega=V_S\cap\mathbf{\Theta}_n
\]
is contained in the hyperplane $\theta_n=\theta_n'$. Since
$\theta_n'\in\mathbb D$, this component cannot meet the distinguished
boundary $b\mathbf{\Theta}_n$. This contradicts the fact that $\Omega$
is a distinguished variety. Hence
$
V_S\cap\{\theta_n=\theta_n'\}$
is finite.
Let
\[
\begin{aligned}
\Big(
\theta_1^{(1)}(\theta_n'),\ldots,
\theta_{n-1}^{(1)}(\theta_n'),\theta_n'
\Big),
\ldots,\Big(
\theta_1^{(m)}(\theta_n'),\ldots,
\theta_{n-1}^{(m)}(\theta_n'),\theta_n'
\Big)
\end{aligned}
\]
be the points of this intersection. For $1\leq i\leq n-1$, define
\[
q_i(z)
=
\prod_{j=1}^{m}
\left(
z-\theta_i^{(j)}(\theta_n')
\right).
\]
Then $q_i(\theta_i)$ vanishes on the fibre
$\theta_n=\theta_n'$.
By the algebraic dependence result for the coordinate functions on
$\Omega$ (see \cite[Lemma 3.4]{S. Pal 1}), there exists an integer
$k\geq m$ such that
\[
\theta_i^k
\in
\operatorname{span}
\{1,\theta_i,\ldots,\theta_i^{k-1}\}
+
\theta_n H^2(\mu),
\qquad
1\leq i\leq n-1.
\]
Consequently,
\[
\theta_1^{k_1}\cdots\theta_{n-1}^{k_{n-1}}
\in
\operatorname{span}
\left\{
\theta_1^{l_1}\cdots\theta_{n-1}^{l_{n-1}}:
0\leq l_i<k
\right\}
+
\theta_nH^2(\mu).
\]
Since the polynomial monomials are dense in $H^2(\mu)$, it follows that
\[
H^2(\mu)
=
\operatorname{span}
\left\{
\theta_1^{l_1}\cdots\theta_{n-1}^{l_{n-1}}:
0\leq l_i<k
\right\}
+
\theta_nH^2(\mu).
\]
Therefore
$
H^2(\mu)\ominus\operatorname{Ran}M_{\theta_n}$
is finite-dimensional. Since $M_{\theta_n}$ is an isometry by
Lemma~\ref{Lem 3},
\[
\mathcal D_{M_{\theta_n}^*}
=
H^2(\mu)\ominus\operatorname{Ran}M_{\theta_n}
\]
is finite-dimensional.
Let
$
N=\dim\mathcal D_{M_{\theta_n}^*}.$
By Lemma~\ref{Lem 3},
$
\mathbf M:=(M_{\theta_1},\ldots,M_{\theta_{n-1}},M_{\theta_n})$
is a pure $\mathbf{\Theta}_n$-isometry. Since
$
\operatorname{Ran}(I-M_{\theta_n}M_{\theta_n}^*)
=\mathcal D_{M_{\theta_n}^*}$
is $N$-dimensional, \cite[Theorem 3.4]{Biswas 2} yields a unitary
equivalence
\[
\mathbf M\cong
(T_{\Phi_1},\ldots,T_{\Phi_{n-1}},T_z)
\]
on $H^2(\mathcal D_{M_{\theta_n}^*})$, where
\[
\Phi_i(z)=\sum_{l=0}^{p}A_l^{(i)}z^l,
\quad 1\leq i\leq n-1,
\]
and each $A_l^{(i)}$ is an $N\times N$ matrix.
By Lemma~\ref{Lem 2}, for
$
(\theta_1,\ldots,\theta_{n-1},\theta_n)\in\mathbf{\Theta}_n,$
we have
$
(\theta_1,\ldots,\theta_{n-1},\theta_n)\in\Omega$
if and only if
$
(\overline{\theta}_1,\ldots,\overline{\theta}_{n-1},
\overline{\theta}_n)$
is a joint eigenvalue of
$
(T_{\Phi_1}^*,\ldots,T_{\Phi_{n-1}}^*,T_z^*).$
Since
\[
T_{\Phi_i}^*k_{\theta_n}
=
\Phi_i(\theta_n)^*k_{\theta_n},
\quad
T_z^*k_{\theta_n}
=
\overline{\theta_n}k_{\theta_n},
\]
this is equivalent to
\[
(\theta_1,\ldots,\theta_{n-1})
\in
\sigma_T(\Phi_1(\theta_n),\ldots,\Phi_{n-1}(\theta_n)).
\]
Consequently,
\begin{equation}\label{Omega-representation}
\begin{aligned}
\Omega
=
\Big\{&
(\theta_1,\ldots,\theta_n)\in\mathbf{\Theta}_n:
(\theta_1,\ldots,\theta_{n-1})
\in
\sigma_T(\Phi_1(\theta_n),\ldots,\Phi_{n-1}(\theta_n))
\Big\}.
\end{aligned}
\end{equation}
For $1\leq i\leq n-1$, put
\[
f_i(\theta_1,\ldots,\theta_n)
=
\det(\Phi_i(\theta_n)-\theta_iI_N).
\]
Then each $f_i$ is a polynomial in
$\mathbb C[\theta_1,\ldots,\theta_n]$. Let
\[
S'=\{f_1,\ldots,f_{n-1}\}
\quad\text{and}\quad
V_{S'}=Z(f_1,\ldots,f_{n-1}).
\]
It follows immediately from \eqref{Omega-representation} that
$
\Omega\subseteq V_{S'}\cap\mathbf{\Theta}_n.$
We next show that $\dim V_{S'}=1$. Fix $\theta_n\in\mathbb D$.
By \eqref{Omega-representation}, the points of
$V_{S'}\cap\mathbf{\Theta}_n$ lying over $\theta_n$ are precisely the
joint eigenvalues of the commuting $(n-1)$-tuple
$
(\Phi_1(\theta_n),\ldots,\Phi_{n-1}(\theta_n)).$
Since these are $N\times N$ matrices, their joint point spectrum is
finite. Hence every fibre of $V_{S'}\cap\mathbf{\Theta}_n$ under the
projection onto the $\theta_n$-coordinate is finite. By
\cite[Lemma 3.5]{S. Pal 1}, it follows that $V_{S'}$ is an affine
algebraic curve. Thus
$
\dim V_{S'}=1.$
Since $V_{S'}$ is defined in $\mathbb C^n$ by $n-1$ polynomials and has
dimension one, the polynomials
\[
f_1,\ldots,f_{n-1}
\]
have height $n-1$. After localization at each maximal ideal containing
$(f_1,\ldots,f_{n-1})$, the ambient polynomial ring is a regular, hence
Cohen--Macaulay, local ring. Therefore
$f_1,\ldots,f_{n-1}$ form a regular sequence. Now $\Omega$ is a nonempty relatively open subset of the algebraic
curve $V_S$. Hence $\Omega$ is Zariski dense in $V_S$. Since
$
\Omega\subseteq V_{S'}\cap\mathbf{\Theta}_n,$
every polynomial in the ideal defining $V_{S'}$ vanishes on $\Omega$ and
therefore vanishes on $V_S$. Thus
$
V_S\subseteq V_{S'}.$
Since both $V_S$ and $V_{S'}$ have dimension one, every irreducible
component of $V_S$ is an irreducible component of $V_{S'}$.
In particular, the irreducible algebraic curve containing $\Omega$ is
a component of the complete intersection $V_{S'}$. Therefore
$\Omega$ is a part of an affine algebraic curve which is a
set-theoretic complete intersection.

It follows from the fact that
$
(T_{\Phi_1},\ldots,T_{\Phi_{n-1}},T_z)$
is a pure $\mathbf{\Theta}_n$-isometry on
$H^2(\mathcal D_{M_{\theta_n}^*})$ that
$T_{\Phi_1},\ldots,T_{\Phi_{n-1}}$ commute. Since the
$\Phi_i$'s are operator-valued analytic functions, we have
\[
T_{\Phi_i}T_{\Phi_j}=T_{\Phi_i\Phi_j},
\quad
T_{\Phi_j}T_{\Phi_i}=T_{\Phi_j\Phi_i}.\]
Consequently,
\[
\Phi_i(z)\Phi_j(z)=\Phi_j(z)\Phi_i(z),
\quad z\in\mathbb D.
\]
Since $\Phi_i$ and $\Phi_j$ are polynomials, the above identity
holds for every $z\in\mathbb C$.
Recall that
\[
\Phi_i(z)=\sum_{l=0}^{p}A_l^{(i)}z^l,
\quad 1\leq i\leq n-1.
\]
Therefore,
\[
0=[\Phi_i(z),\Phi_j(z)]
=
\sum_{k=0}^{2p}
\left(
\sum_{l=0}^{k}
[A_l^{(i)},A_{k-l}^{(j)}]
\right)z^k,
\]
where we put $A_l^{(i)}=0$ whenever
$l\notin\{0,\ldots,p\}$. Comparing coefficients of powers of $z$,
we obtain
\begin{equation}\label{commutator}
\sum_{l=0}^{k}[A_l^{(i)},A_{k-l}^{(j)}]=0,
\qquad
0\leq k\leq2p,\quad
1\leq i,j\leq n-1.
\end{equation}
Thus condition $(1)$ of Theorem~\ref{Determinantal Representation}
is satisfied.

Next, by \cite[Theorem 3.5]{Paul 1},
$
(T_{\Phi_1},\ldots,T_{\Phi_{n-1}},T_z)$
is a $\mathbf{\Theta}_n$-contraction for every $z\in\mathbb T$.
Hence, by Lemma~\ref{Lem 4},
$
(\Phi_1(z),\ldots,\Phi_{n-1}(z),zI)$
is a $\mathbf{\Theta}_n$-contraction for every $z\in\mathbb T$.
For $z\in\overline{\mathbb D}$, define
\[
\Psi(z)
=
(\Phi_1(z),\ldots,\Phi_{n-1}(z),zI)
\]
as an operator-valued $n$-tuple on
$\mathcal D_{M_{\theta_n}^*}$. Since each $\Phi_i$ is a polynomial,
$\Psi$ is holomorphic on a neighbourhood of
$\overline{\mathbb D}$.
Let $q$ be any polynomial in $n$ variables. For $z\in\mathbb T$,
we have
\[
\|q(\Psi(z))\|
=
\|q(\Phi_1(z),\ldots,\Phi_{n-1}(z),zI)\|
\leq
\|q\|_{\infty,\overline{\mathbf{\Theta}}_n}.
\]
The function
$
z\longmapsto q(\Psi(z))$
is an operator-valued analytic function on a neighbourhood of
$\overline{\mathbb D}$. Hence, by the maximum modulus principle,
\[
\|q(\Psi(z))\|
\leq
\|q\|_{\infty,\overline{\mathbf{\Theta}}_n},
\quad z\in\mathbb D.
\]
Thus
$
(\Phi_1(z),\ldots,\Phi_{n-1}(z),zI)$
is a $\mathbf{\Theta}_n$-contraction for every $z\in\mathbb D$. Consequently, for every $z\in\mathbb D$,
\[
\sigma_T\big(\Phi_1(z),\ldots,\Phi_{n-1}(z),zI\big)
\subseteq \overline{\mathbf{\Theta}}_n.
\]
We claim that, in fact,
\[
\sigma_T\big(\Phi_1(z),\ldots,\Phi_{n-1}(z),zI\big)
\subseteq \mathbf{\Theta}_n,
\quad z\in\mathbb D.
\]
Suppose, to the contrary, that for some $z_0\in\mathbb D$ there exists
\[
(\theta_1,\ldots,\theta_{n-1},z_0)
\in
\sigma_T\big(\Phi_1(z_0),\ldots,\Phi_{n-1}(z_0),z_0I\big)
\cap\partial\mathbf{\Theta}_n.
\]
Since
$
\theta_i\in\sigma_T(\Phi_i(z_0)),
1\le i\le n-1,$
we have
\[
\det(\Phi_i(z_0)-\theta_iI)=0,
\quad 1\le i\le n-1.
\]
Hence
$
(\theta_1,\ldots,\theta_{n-1},z_0)$
belongs to the affine algebraic variety determined by the
determinantal equations defining the algebraic curve containing
$\Omega$. Therefore, as $\Omega$ is an irreducible component of this
curve and is open in the relative topology of $\mathbf{\Theta}_n$,
the above point belongs to the closure of $\Omega$.

On the other hand, since $|z_0|<1$, \cite[Theorem 2.5]{Biswas 2}
implies that
$
(\theta_1,\ldots,\theta_{n-1},z_0)
\notin b\mathbf{\Theta}_n.$
Thus
$
(\theta_1,\ldots,\theta_{n-1},z_0)
\in
\overline{\Omega}\cap
\big(\partial\mathbf{\Theta}_n\setminus b\mathbf{\Theta}_n\big),$
which contradicts the fact that $\Omega$ is a distinguished variety
in $\mathbf{\Theta}_n$. Therefore,
$
\sigma_T\big(\Phi_1(z),\ldots,\Phi_{n-1}(z),zI\big)
\subseteq \mathbf{\Theta}_n, z\in\mathbb D.$
Now fix $\theta_n\in\mathbb D$ and let $v$ be a unit joint eigenvector
of
$
\big(\Phi_1(\theta_n),\ldots,\Phi_{n-1}(\theta_n)\big)$
corresponding to the joint eigenvalue
$(\theta_1,\ldots,\theta_{n-1})$. Then
\[
\Phi_i(\theta_n)v=\theta_i v,
\quad 1\le i\le n-1,
\]
and hence
\[
\big(\Phi_1(\theta_n),\ldots,\Phi_{n-1}(\theta_n),
\theta_nI\big)v
=
(\theta_1,\ldots,\theta_{n-1},\theta_n)v.
\]
Consequently,
$
(\theta_1,\ldots,\theta_{n-1},\theta_n)
\in\mathbf{\Theta}_n.$
By \cite[Theorem 3.1]{Paul}, there exists
$
(\alpha_1,\ldots,\alpha_{n-1})\in\mathbb G_{n-1}$
such that
\[
\theta_i
=
\alpha_i+\overline{\alpha}_{n-i}\theta_n^p,
\quad
1\le i\le n-1.
\]
Moreover,
\[
\theta_i
=
\langle\Phi_i(\theta_n)v,v\rangle
=
\left\langle
\sum_{l=0}^{p}A_l^{(i)}\theta_n^l v,v
\right\rangle.
\]
Therefore,
\[
\sum_{l=0}^{p}
\langle A_l^{(i)}v,v\rangle\theta_n^l
=
\alpha_i+\overline{\alpha}_{n-i}\theta_n^p,
\qquad
1\le i\le n-1.
\]
Thus condition $(2)$ of Theorem~\ref{Determinantal Representation}
is satisfied. This completes the proof.

\end{proof}

We now show that every distinguished variety in $\mathbf{\Theta}_n$
arises as the image, under the map $\theta$, of a distinguished variety
in $\mathbb{D}^n$, and that the converse also holds.

\begin{thm}\label{Thata_n D^n Dist Vari Relation}
Let $\Omega\subseteq\mathbf{\Theta}_n$. Then $\Omega$ is a distinguished
variety in $\mathbf{\Theta}_n$ if and only if there exists a distinguished
variety $\widehat{\Omega}$ in $\mathbb{D}^n$ such that
\[
\Omega=\boldsymbol{\theta}(\widehat{\Omega}).
\]
\end{thm}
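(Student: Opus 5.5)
The argument runs along the lines of the analogous results for $G_n$ and $\mathbb D^2$: pull back along the polynomial map $\boldsymbol{\theta}:\mathbb C^n\to\mathbb C^n$, push forward along it, and use that $\boldsymbol{\theta}$ is proper and respects boundaries. The basic facts I would establish first are:
\begin{itemize}
\item[(a)] $\boldsymbol{\theta}^{-1}(\mathbf{\Theta}_n)=\mathbb D^n$ and $\boldsymbol{\theta}^{-1}(\overline{\mathbf{\Theta}}_n)=\overline{\mathbb D}^n$. This follows from the characterization that $\boldsymbol\theta\in\overline{\mathbf\Theta}_n$ if and only if all roots of \eqref{P} lie in $\overline{\mathbb D}$. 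The roots of \eqref{P} at $\boldsymbol{\theta}(z)$ are $z_1^m,\dots,z_n^m$, since $\theta_n^p=(z_1\cdots z_n)^m$.
\item[(b)] $\boldsymbol{\theta}$ is a proper, finite polynomial map.
\item[(c)] $\boldsymbol{\theta}^{-1}(b\mathbf{\Theta}_n)=\mathbb T^n$, using the description of $b\mathbf{\Theta}_n$ in \cite[Theorem 2.5]{Biswas 2}.
\end{itemize}
From (a) and (b) one also gets $\boldsymbol{\theta}(\partial\mathbb D^n)=\partial\mathbf{\Theta}_n$ and $\boldsymbol{\theta}(\overline{A})=\overline{\boldsymbol{\theta}(A)}$ for $A\subseteq\mathbb D^n$.

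\emph{Direction ($\Leftarrow$).} Let $\widehat\Omega=V\cap\mathbb D^n$ be distinguished, with $V$ algebraic. By the proper mapping theorem (Remmert), or Chevalley together with finiteness, the image $\boldsymbol{\theta}(V)$ is an algebraic set $W$. Using (a), $\boldsymbol{\theta}(\widehat\Omega)=\boldsymbol{\theta}(V\cap\boldsymbol{\theta}^{-1}(\mathbf\Theta_n))=W\cap\mathbf\Theta_n$. Next, $\overline{\boldsymbol\theta(\widehat\Omega)}\cap\partial\mathbf\Theta_n=\boldsymbol\theta(\overline{\widehat\Omega})\cap\partial\mathbf\Theta_n$, and by (a) this equals $\boldsymbol\theta(\overline{\widehat\Omega}\cap\partial\mathbb D^n)\subseteq\boldsymbol\theta(\mathbb T^n)=b\mathbf\Theta_n$. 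One point needs care here: $W\cap\mathbf\Theta_n$ might contain points with preimages in $V\setminus\mathbb D^n$. Since (a) says every preimage of a point of $\mathbf\Theta_n$ lies in $\mathbb D^n$, this cannot happen.

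\emph{Direction ($\Rightarrow$).} Let $\Omega=V_S\cap\mathbf\Theta_n$ be distinguished. Set $\widehat V=\boldsymbol\theta^{-1}(V_S)$, which is algebraic since it is defined by the polynomials $f\circ\boldsymbol\theta$, $f\in S$, and set $\widehat\Omega=\widehat V\cap\mathbb D^n=\boldsymbol\theta^{-1}(\Omega)$. Surjectivity of $\boldsymbol\theta:\mathbb D^n\to\mathbf\Theta_n$ gives $\boldsymbol\theta(\widehat\Omega)=\Omega$. For the distinguished boundary property, take $w\in\overline{\widehat\Omega}\cap\partial\mathbb D^n$. Then $\boldsymbol\theta(w)\in\overline\Omega$, and $\boldsymbol\theta(w)\in\partial\mathbf\Theta_n$ by (a). Hence $\boldsymbol\theta(w)\in b\mathbf\Theta_n$, and so $w\in\mathbb T^n$ by (c). Finally, $\widehat\Omega\neq\varnothing$ because $\Omega\neq\varnothing$; if the definition of distinguished variety also requires this, one checks $\overline{\widehat\Omega}\cap\mathbb T^n\ne\varnothing$ the same way.

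The main obstacle is fact (c), $\boldsymbol\theta^{-1}(b\mathbf\Theta_n)=\mathbb T^n$. The inclusion $\supseteq$ is easy once $b\mathbf\Theta_n=\boldsymbol\theta(\mathbb T^n)$ is known. For $\subseteq$, I would use \cite[Theorem 2.5]{Biswas 2}: a point of $b\mathbf\Theta_n$ satisfies $|\theta_n|=1$ and lies in $\overline{\mathbf\Theta}_n$. If $\boldsymbol\theta(z)$ is such a point, then $|z_1\cdots z_n|^q=1$ with all $|z_j|\le1$, which forces every $|z_j|=1$. The properness in (b) should be routine, since $|\theta_i(z)|$ bounded forces the roots $z_j^m$ of \eqref{P} to be bounded.
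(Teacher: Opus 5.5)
Your proposal is correct and takes the same route as the paper: pull $V_S$ back along $\boldsymbol{\theta}$ to get $\widehat{\Omega}=\boldsymbol{\theta}^{-1}(V_S)\cap\mathbb D^n$, push a distinguished $\widehat{\Omega}$ forward, and transfer the exit condition using continuity of $\boldsymbol{\theta}$ and $\boldsymbol{\theta}(\mathbb T^n)=b\mathbf{\Theta}_n$. Your facts (a)--(c) and the finiteness of $\boldsymbol{\theta}$ make explicit several steps the paper uses without proof: that $\partial\mathbb D^n\setminus\mathbb T^n$ maps into $\partial\mathbf{\Theta}_n\setminus b\mathbf{\Theta}_n$, that $\boldsymbol{\theta}(V)$ is algebraic, and that a point of $\overline{\Omega}\cap b\mathbf{\Theta}_n$ lifts to $\overline{\widehat{\Omega}}\cap\mathbb T^n$; the paper's separate check that $\dim\widehat{\Omega}=1$ follows in your setup from the finiteness in (b).
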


\begin{proof}
Suppose first that $\Omega$ is a distinguished variety in
$\mathbf{\Theta}_n$. By Theorem~\ref{Determinantal Representation},
$\Omega$ is an affine algebraic curve and a complete intersection.
Let
\[
\Omega
=
V_S\cap\mathbf{\Theta}_n,
\quad
S=\{f_1,\ldots,f_n\}
\subseteq\mathbb C[\theta_1,\ldots,\theta_n].
\]
Define
\[
\widehat{\Omega}
=
\left\{
(z_1,\ldots,z_n)\in\mathbb D^n:
(f_i\circ\theta)(z_1,\ldots,z_n)=0,
\quad 1\leq i\leq n
\right\}.
\]
Since $\boldsymbol{\theta}(\mathbb D^n)=\mathbf{\Theta}_n$, it follows immediately that
$
\boldsymbol{\theta}(\widehat{\Omega})=\Omega.$
We claim that $\widehat{\Omega}$ is one-dimensional. Suppose, to the
contrary, that $\dim\widehat{\Omega}>1$. Then, by the same argument used
in the proof of Theorem~\ref{Determinantal Representation}, an algebraic
set of dimension greater than one cannot exit $\mathbb D^n$ only through
the distinguished boundary $\mathbb T^n$. Consequently,
$\widehat{\Omega}$ must intersect
$
\partial\mathbb D^n\setminus\mathbb T^n.$
Let
$
z=(z_1,\ldots,z_n)\in
\overline{\widehat{\Omega}}\cap
(\partial\mathbb D^n\setminus\mathbb T^n).$
Since $\boldsymbol{\theta}$ is continuous on $\overline{\mathbb D}^{\,n}$, we have
$
\boldsymbol{\theta}(z)\in
\overline{\Omega}\cap
\left(
\overline{\mathbf{\Theta}}_n\setminus b\mathbf{\Theta}_n
\right).$
Indeed, $\boldsymbol{\theta}(\mathbb T^n)=b\mathbf{\Theta}_n$, whereas
$z\notin\mathbb T^n$. This contradicts the fact that $\Omega$ is a
distinguished variety in $\mathbf{\Theta}_n$. Hence
$
\dim\widehat{\Omega}=1.$
Moreover,
$
\boldsymbol{\theta}(\mathbb T^n)=b\mathbf{\Theta}_n.$
Since $\Omega$ exits $\mathbf{\Theta}_n$ through
$b\mathbf{\Theta}_n$, there is a point
$
\lambda\in
\overline{\Omega}\cap b\mathbf{\Theta}_n.$
Choose $z\in\mathbb T^n$ such that $\boldsymbol{\theta}(z)=\lambda$. By the definition
of $\widehat{\Omega}$ and the continuity of $\boldsymbol{\theta}$, we obtain
$
z\in\overline{\widehat{\Omega}}\cap\mathbb T^n.$
Thus $\widehat{\Omega}$ exits $\mathbb D^n$ through $\mathbb T^n$.
Therefore $\widehat{\Omega}$ is a distinguished variety in $\mathbb D^n$.

Conversely, suppose that $\widehat{\Omega}$ is a distinguished variety
in $\mathbb D^n$. Since $\boldsymbol{\theta}$ is a polynomial map,
$
\Omega:=\theta(\widehat{\Omega})$
is an algebraic set in $\mathbf{\Theta}_n$. Since
$\widehat{\Omega}$ is one-dimensional, $\Omega$ is also one-dimensional.
Furthermore,
$
\boldsymbol{\theta}(\mathbb T^n)=b\mathbf{\Theta}_n.$
Hence, as $\widehat{\Omega}$ exits $\mathbb D^n$ through $\mathbb T^n$,
its image $\Omega$ exits $\mathbf{\Theta}_n$ through
$b\mathbf{\Theta}_n$. Thus $\Omega$ is a distinguished variety in
$\mathbf{\Theta}_n$.
Therefore,
\[
\Omega\text{ is a distinguished variety in }\mathbf{\Theta}_n
\quad\Longleftrightarrow\quad
\Omega=\boldsymbol{\theta}(\widehat{\Omega})
\]
for some distinguished variety $\widehat{\Omega}$ in $\mathbb D^n$.
\end{proof}

The following theorem establishes the polynomial convexity of the closure of every distinguished variety in $\mathbf{\Theta}_n$.
\begin{thm}\label{Polynomial Convexity}
	The closure of every distinguished variety in $\mathbf{\Theta}_n$ is polynomially convex.
\end{thm}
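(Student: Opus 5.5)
Let $\Omega=V\cap\mathbf{\Theta}_n$ be a distinguished variety, with $V$ an affine algebraic curve. The plan is the standard one for varieties in the bidisc and the symmetrized polydisc: show that any point of the polynomially convex hull $\widehat{\overline{\Omega}}$ lies in $\overline{\Omega}$. Since $\overline{\Omega}\subseteq\overline{\mathbf{\Theta}}_n$ and $\overline{\mathbf{\Theta}}_n$ is polynomially convex, we have $\widehat{\overline{\Omega}}\subseteq\overline{\mathbf{\Theta}}_n$. Fix $\lambda\in\widehat{\overline{\Omega}}$. I will treat two cases: $\lambda\notin V$, and $\lambda\in V$.

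\emph{Case $\lambda\notin V$.} Choose a polynomial $f$ in the ideal of $V$ with $f(\lambda)\neq 0$. Then $f\equiv 0$ on $\overline{\Omega}\subseteq V$, while $|f(\lambda)|>0=\sup_{\overline{\Omega}}|f|$. This contradicts $\lambda\in\widehat{\overline{\Omega}}$, so this case cannot occur.

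\emph{Case $\lambda\in V$.} Then $\lambda\in V\cap\overline{\mathbf{\Theta}}_n$. If $\lambda\in\mathbf{\Theta}_n$, then $\lambda\in\Omega$ and we are done. Otherwise $\lambda\in V\cap\partial\mathbf{\Theta}_n$, and we must show that $\lambda\in\overline{\Omega}$. I would use Theorem~\ref{Thata_n D^n Dist Vari Relation} to lift the problem to the polydisc: write $\Omega=\boldsymbol{\theta}(\widehat\Omega)$ with $\widehat\Omega$ a distinguished variety in $\mathbb D^n$, and let $\widehat V$ be its Zariski closure, which is contained in $\boldsymbol{\theta}^{-1}(V)$. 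Polynomial convexity of closures of distinguished varieties in $\mathbb D^n$ (the Agler--McCarthy/Pal argument) gives $\widehat V\cap\overline{\mathbb D}^n=\overline{\widehat\Omega}$. This identity is then pushed forward by the proper polynomial map $\boldsymbol{\theta}$, using that $\boldsymbol{\theta}^{-1}(\overline{\mathbf{\Theta}}_n)=\overline{\mathbb D}^n$. One more point is needed: a hull point must come from a preimage lying on the right component. Since $p\circ\boldsymbol{\theta}$ is a polynomial for every polynomial $p$, hull points of $\overline{\Omega}$ correspond to hull points of $\boldsymbol{\theta}^{-1}(\overline{\Omega})\cap\overline{\mathbb D}^n$. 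I would establish this by averaging over the finite group of deck transformations $z_j\mapsto\omega z_j$ and permutations, under which $\boldsymbol{\theta}$ is invariant.

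An alternative route through Lemma~\ref{Lem 2}, which I would keep as a fallback, is the following. Given $\lambda\in\widehat{\overline{\Omega}}$, the character $p\mapsto p(\lambda)$ is bounded by $\|p\|_{\overline\Omega}$. Approximating $\lambda$ from inside, the point $r\cdot\lambda$ with $r<1$ acting through a dilation of the disc variables lies in $\mathbf{\Theta}_n$. Via the determinantal representation of Theorem~\ref{Determinantal Representation}, the fibre over $\lambda_n$ is the joint spectrum of $(\Phi_1(\lambda_n),\ldots,\Phi_{n-1}(\lambda_n))$, which depends continuously on $\lambda_n$. So when $|\lambda_n|<1$, the point $\lambda$, being in $V$, is a limit of points of $\Omega$. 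When $|\lambda_n|=1$, the point $\lambda$ lies in $b\mathbf{\Theta}_n$ by the characterization of \cite[Theorem 2.5]{Biswas 2}, and it is again a limit of fibre points over $r\lambda_n$.

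The main obstacle is the boundary sub-case: showing that points of $V\cap\partial\mathbf{\Theta}_n$ are actually in $\overline{\Omega}$. This needs to exclude components of $V$ that touch $\overline{\mathbf{\Theta}}_n$ only at the boundary, and that exclusion should come from the irreducibility in condition~(4) and from the continuity of the eigenvalues of $\Phi_i(z)$ as $z\to\mathbb T$.
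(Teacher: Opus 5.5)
\textbf{There is a genuine gap at the decisive step.} Your two-case skeleton is the paper's. Points off the curve are separated by a polynomial vanishing on it; the paper first disposes of points outside $\overline{\mathbf{\Theta}}_n$ using polynomial convexity of $\overline{\mathbf{\Theta}}_n$. Everything then hinges on the curve meeting $\overline{\mathbf{\Theta}}_n$ only in $\overline{\Omega}$. The paper simply asserts this in the form $Z(f_1,\ldots,f_{n-1})\cap\overline{\mathbf{\Theta}}_n=\overline{\Omega}$. You rightly flag it as the crux, but you do not prove it, and neither of your sketches fills it:
\begin{enumerate}
\item \emph{Lifting route.} Polynomial convexity of $\overline{\widehat\Omega}$ does not yield $\widehat V\cap\overline{\mathbb D}^n=\overline{\widehat\Omega}$; the implication runs the other way. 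Moreover, convexity of closures of distinguished varieties in $\mathbb D^n$ for $n\geq 3$ is the same problem one level up, not something you can quote from Agler--McCarthy. (If you had it, your averaging over the group would already finish at the level of hulls, with no set identity needed.)
\item \emph{Fallback route.} The dilated points $r\cdot\lambda$ lie in $\mathbf{\Theta}_n$ but not on $V$, so they give no points of $\Omega$. The subcase $|\lambda_n|<1$ is asserted without reason. Irreducibility (condition (4)) plays no role.
\item \emph{Choice of $V$.} Over a fixed $\theta_n$, the zero set $Z(f_1,\ldots,f_{n-1})$ is the product $\sigma(\Phi_1(\theta_n))\times\cdots\times\sigma(\Phi_{n-1}(\theta_n))$. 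This is in general strictly larger than the joint spectrum (already for diagonal $\Phi_i$). So $Z(f_1,\ldots,f_{n-1})\cap\mathbf{\Theta}_n$ can be strictly larger than $\Omega$. Your step ``$\lambda\in\mathbf{\Theta}_n\Rightarrow\lambda\in\Omega$'' then fails, and the identity the paper quotes is not automatic.
\end{enumerate}

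\textbf{The missing idea is to use the joint-spectrum locus.} Set $\mathcal V=\{\boldsymbol\theta\in\mathbb C^n:(\theta_1,\ldots,\theta_{n-1})\in\sigma_T(\Phi_1(\theta_n),\ldots,\Phi_{n-1}(\theta_n))\}$. It is algebraic: it is the common zero set of the $N\times N$ minors of the stacked matrix with blocks $\Phi_i(\theta_n)-\theta_iI$.
\begin{enumerate}
\item Condition (2) of Theorem~\ref{Determinantal Representation} gives $\mathcal V\cap(\mathbb C^{n-1}\times\mathbb D)\subseteq\mathbf{\Theta}_n$. This follows exactly as in the first half of that theorem's proof, via the description $\theta_i=\alpha_i+\overline{\alpha}_{n-i}\theta_n^p$. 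Hence $\Omega=\mathcal V\cap(\mathbb C^{n-1}\times\mathbb D)$, and no point of $\mathcal V\cap\partial\mathbf{\Theta}_n$ has $|\lambda_n|<1$.
\item If $|\lambda_n|=1$, use continuity of joint spectra. Simultaneous triangularization gives $\det\bigl(tI-\sum_i c_i\Phi_i(z)\bigr)=\prod_{j=1}^{N}\bigl(t-\sum_i c_i\mu^{(j)}_i(z)\bigr)$, where the distinct $\mu^{(j)}(z)$ form the joint spectrum. Pass to a convergent subsequence as $z\to\lambda_n$ inside $\mathbb D$ and use unique factorization in $\mathbb C[t,c_1,\ldots,c_{n-1}]$. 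This shows every point of $\mathcal V$ over $\lambda_n$ is a limit of points of $\Omega$.
\item Thus $\overline{\Omega}=\mathcal V\cap(\mathbb C^{n-1}\times\overline{\mathbb D})$. A point off $\overline{\Omega}$ is separated from it either by one of the minors (if it is off $\mathcal V$) or by the coordinate $\theta_n$ (if $|\lambda_n|>1$).
\end{enumerate}
This uses neither irreducibility nor the lift to $\mathbb D^n$.
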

\begin{proof}
By Theorem~\ref{Determinantal Representation}, every distinguished
variety $\Omega$ in $\mathbf{\Theta}_n$ admits a representation
\[
\begin{aligned}
\Omega
=
\Big\{&
(\theta_1,\ldots,\theta_n)\in\mathbf{\Theta}_n:
(\theta_1,\ldots,\theta_{n-1})
\in
\sigma_T(\Phi_1(\theta_n),\ldots,\Phi_{n-1}(\theta_n))
\Big\},
\end{aligned}
\]
where
\[
\Phi_i(z)=\sum_{l=0}^{p}A_l^{(i)}z^l,
\quad 1\leq i\leq n-1.
\]
Put
\[
f_i(\theta_1,\ldots,\theta_n)
=
\det(\Phi_i(\theta_n)-\theta_iI),
\quad 1\leq i\leq n-1.
\]
Then $f_i$ vanishes identically on $\Omega$ and hence on
$\overline{\Omega}$. In particular,
$
\|f_i\|_{\infty,\overline{\Omega}}=0.$
Since
$
\Omega\subseteq\mathbf{\Theta}_n
\subseteq\overline{\mathbf{\Theta}}_n,$
we have
$
\overline{\Omega}\subseteq\overline{\mathbf{\Theta}}_n.$
Let
$
\widetilde{x}\in\mathbb C^n\setminus\overline{\Omega}.$
We distinguish two cases.

\medskip
\noindent\textbf{Case 1.}
Suppose
$
\widetilde{x}\notin\overline{\mathbf{\Theta}}_n.$
Since $\overline{\mathbf{\Theta}}_n$ is polynomially convex, there
exists a polynomial $\widetilde p$ such that
\[
|\widetilde p(\widetilde{x})|
>
\|\widetilde p\|_{\infty,\overline{\mathbf{\Theta}}_n}
\geq
\|\widetilde p\|_{\infty,\overline{\Omega}}.
\]
Thus $\widetilde{x}$ does not belong to the polynomial hull of
$\overline{\Omega}$.

\medskip
\noindent\textbf{Case 2.}
Suppose
$
\widetilde{x}\in
\overline{\mathbf{\Theta}}_n\setminus\overline{\Omega}.$
By the determinantal representation and the equality
\[
Z(f_1,\ldots,f_{n-1})\cap\overline{\mathbf{\Theta}}_n
=
\overline{\Omega},
\]
there exists $k\in\{1,\ldots,n-1\}$ such that
$
f_k(\widetilde{x})\neq0.$
Since $f_k$ vanishes on $\overline{\Omega}$,
$
\|f_k\|_{\infty,\overline{\Omega}}=0,$
whereas
$
|f_k(\widetilde{x})|>0.$
Hence $f_k$ separates $\widetilde{x}$ from $\overline{\Omega}$.
In both cases every point of
$\mathbb C^n\setminus\overline{\Omega}$ can be separated from
$\overline{\Omega}$ by a polynomial. Therefore
$\overline{\Omega}$ is polynomially convex.
\end{proof}

\section{Dilation and Functional Model for a Class of Pure
$\mathbf{\Theta}_n$-Contractions}\label{Dilation and Model}
Every $\mathbf{\Theta}_n$-isometry is the restriction of a
$\mathbf{\Theta}_n$-unitary to a joint invariant subspace, it follows
immediately from Definition~\ref{Dilation Definiton} that every
$\mathbf{\Theta}_n$-contraction which admits a
$\mathbf{\Theta}_n$-isometric dilation also admits a
$\mathbf{\Theta}_n$-unitary dilation.

\begin{defn}\label{Dilation Definiton}
	Let $\mathbf{T}=(T_1,\dots,T_n)$ be a $\mathbf{\Theta}_n$-contraction
	on a Hilbert space $\mathcal{H}$. An $n$-tuple of commuting bounded
	operators $\mathbf{V}=(V_1,\dots,V_n)$ acting on a Hilbert space
	$\mathcal{K}$ containing $\mathcal{H}$ as a subspace is called a
	\emph{$\mathbf{\Theta}_n$-isometric dilation} of $\mathbf{T}$ if
	\begin{enumerate}
		\item $\mathbf{V}$ is a $\mathbf{\Theta}_n$-isometry;
		\item $V_i^*|_{\mathcal H}=T_i^*$, for $1\leq i\leq n$.
	\end{enumerate}
	Furthermore, $\mathbf{V}$ is called a
	\emph{minimal $\mathbf{\Theta}_n$-isometric dilation} of $\mathbf{T}$ if
	\[
	\mathcal K
	=
	\overline{\operatorname{span}}
	\{V_n^k h:\ h\in\mathcal H,\ k\in\mathbb N\cup\{0\}\}.
	\]
	In this case, $\mathcal K$ is called the
	\emph{minimal $\mathbf{\Theta}_n$-isometric dilation space} of
	$\mathbf{T}$.
\end{defn}
In this section, we develop a minimal $\mathbf{\Theta}_n$-isometric
dilation and a functional model for a class of pure
$\mathbf{\Theta}_n$-contractions satisfying the condition
\begin{equation}\label{Condition}
	D_{T_n^*}T_i^*
	=
	\sum_{l=0}^{p}
	A_l^{(i)*}D_{T_n^*}T_n^{*l},
	\quad 1\leq i\leq n-1.
\end{equation}
\begin{prop}\label{Existence of Minimal Isometric Dilation}
	Let $\mathbf{T}=(T_1,\dots,T_n)$ be a
	$\mathbf{\Theta}_n$-contraction on a Hilbert space $\mathcal H$.
	Suppose that $\mathbf{T}$ admits a $\mathbf{\Theta}_n$-isometric
	dilation. Then $\mathbf{T}$ admits a minimal
	$\mathbf{\Theta}_n$-isometric dilation.
\end{prop}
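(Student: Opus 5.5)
The plan is to restrict a given dilation to the smallest subspace generated by $\mathcal H$ under $V_n$, which is the standard construction for minimal isometric dilations. Let $\mathbf V=(V_1,\dots,V_n)$ on $\mathcal K\supseteq\mathcal H$ be a $\mathbf{\Theta}_n$-isometric dilation of $\mathbf T$. Put
\[
\mathcal K_0=\overline{\operatorname{span}}\{V_n^k h:\ h\in\mathcal H,\ k\in\mathbb N\cup\{0\}\}.
\]
The candidate minimal dilation is $\mathbf W=(V_1|_{\mathcal K_0},\dots,V_n|_{\mathcal K_0})$. It is minimal by construction, as long as it is well defined and is still a dilation.

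\textbf{Step 1: $\mathcal K_0$ is invariant under each $V_i$.} This is the crux of the argument.

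\begin{itemize}
\item Invariance under $V_n$ is clear.
\item For $i<n$, we have $V_iV_n^kh=V_n^kV_ih$, so it suffices to show $V_i\mathcal H\subseteq\mathcal K_0$.
\item Write $V_ih=P_{\mathcal H}V_ih+(I-P_{\mathcal H})V_ih$. The first piece equals $T_ih$, since $\langle V_ih,h'\rangle=\langle h,V_i^*h'\rangle=\langle h,T_i^*h'\rangle=\langle T_ih,h'\rangle$, and it lies in $\mathcal H\subseteq\mathcal K_0$.
\item For the remainder, I will show $V_ih\perp\mathcal K_0^\perp$. Take $x\in\mathcal K_0^\perp$, so that $V_n^{*k}x\perp\mathcal H$ for all $k$. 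Then $\langle V_ih,x\rangle=\langle h,V_i^*x\rangle$, and I need $V_i^*x\perp\mathcal H$, i.e.\ $\mathcal K_0^\perp$ invariant under $V_i^*$.
\end{itemize}

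Showing that last invariance directly is the main obstacle. The cleanest route uses that $\mathcal H$ is co-invariant, so $\mathcal H$ is invariant under every $V_i^*$. Hence $\mathcal L:=\overline{\operatorname{span}}\{V_1^{k_1}\cdots V_n^{k_n}h\}$ is jointly invariant, and $\mathcal K_0\subseteq\mathcal L$.

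To obtain equality, I would use the $\mathbf{\Theta}_n$-isometry structure: $V_n$ is an isometry, and $V_i=V_{n-i}^*V_n^p$, an identity valid for $\mathbf{\Theta}_n$-unitaries and inherited by restriction via the compression $P_{\mathcal K}N_{n-i}^*N_n^p|_{\mathcal K}$. Using this relation, for $x\in\mathcal K_0^\perp$ and $h\in\mathcal H$,
\[
\langle V_i V_n^k h,x\rangle=\langle V_n^{p+k}h, V_{n-i}x\rangle .
\]
I would try to iterate and combine this with co-invariance to move all $V_j$'s onto $\mathcal H$.

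If this fails, I would fall back on the alternative definition of minimality: take $\mathcal K_0=\mathcal L$, and observe that for pure/dilation contexts one shows $\mathcal L$ equals the $V_n$-span by the Wold-type argument. I flag Step 1 as the delicate point.

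\textbf{Step 2: $\mathbf W$ is a $\mathbf{\Theta}_n$-isometry.} Since $\mathbf V$ is the restriction of a $\mathbf{\Theta}_n$-unitary $\mathbf N$ on some $\widetilde{\mathcal K}\supseteq\mathcal K$, and $\mathcal K_0$ is invariant under $\mathbf V$, it follows that $\mathbf W=\mathbf N|_{\mathcal K_0}$, which is a $\mathbf{\Theta}_n$-isometry by Definition~\ref{Defition 1}.

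\textbf{Step 3: the dilation property.} Because $\mathcal H\subseteq\mathcal K_0$ and $\mathcal K_0$ is $V_i$-invariant, $W_i^*=P_{\mathcal K_0}V_i^*|_{\mathcal K_0}$. Therefore for $h\in\mathcal H$,
\[
W_i^*h=P_{\mathcal K_0}V_i^*h=P_{\mathcal K_0}T_i^*h=T_i^*h .
\]
Minimality holds by definition of $\mathcal K_0$, which completes the plan.
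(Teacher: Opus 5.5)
\paragraph{The gap.} You follow the same route as the paper: restrict the given dilation to $\mathcal K_0=\overline{\operatorname{span}}\{V_n^kh:h\in\mathcal H,\ k\geq 0\}$. Your Steps 2 and 3 are correct once $\mathcal K_0$ is known to be invariant under $V_1,\dots,V_{n-1}$. Step 1, however, is never established.

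Your reformulation (``$\mathcal K_0^\perp$ is invariant under $V_i^*$'') is the same statement in dual form. The relation $V_i=V_{n-i}^*V_n^p$ does not help either, because $V_{n-i}^*$ need not preserve $\mathcal K_0$. The paper's proof asserts this invariance in one line, ``by the structural relations satisfied by a $\mathbf{\Theta}_n$-isometry''. Your suspicion is justified: the claim is false in general, so no argument can complete Step 1.

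Here is a counterexample. Take $n=2$ and $m=p=1$, so $\mathbf{\Theta}_2=G_2$. Let $\mathcal K=H^2(\mathbb D^2)$ and $\mathbf V=(M_{z_1+z_2},M_{z_1z_2})$, which is the restriction of the $\mathbf{\Theta}_2$-unitary $(M_{z_1+z_2},M_{z_1z_2})$ on $L^2(\mathbb T^2)$. Let $\mathcal H=\mathbb C\cdot 1$.
\begin{itemize}
\item Since $V_1^*1=V_2^*1=0$, $\mathbf V$ is a $\mathbf{\Theta}_2$-isometric dilation of the $\mathbf{\Theta}_2$-contraction $\mathbf T=(0,0)$.
\item Yet $\mathcal K_0=\overline{\operatorname{span}}\{(z_1z_2)^k:k\geq 0\}$ does not contain $V_1 1=z_1+z_2$.
\item Also $V_1^*(z_1z_2)=z_1+z_2\notin\mathcal K_0$, which is why the $V_{n-i}^*V_n^p$ identity is of no use.
\item The same failure occurs for general $n,m,p$, with $\mathbf V=(M_{\theta_1},\dots,M_{\theta_n})$ on $H^2(\mathbb D^n)$ and $\mathcal H=\mathbb C\cdot 1$.
\end{itemize}
So cutting an arbitrary $\mathbf{\Theta}_n$-isometric dilation down to the $V_n$-cyclic subspace generated by $\mathcal H$ does not, in general, give a tuple of operators on $\mathcal K_0$ at all.

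\paragraph{Your fallback.} Your fallback changes what is being proved. With $\mathcal L=\overline{\operatorname{span}}\{V_1^{k_1}\cdots V_n^{k_n}h\}$, your Steps 2 and 3 go through verbatim. You then get a dilation that is minimal in the joint-span sense customary in the $\Gamma_n$ literature. It is not minimal in the sense of Definition~\ref{Dilation Definiton}, which uses powers of $V_n$ alone.

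The proposed ``Wold-type'' identification $\mathcal L=\mathcal K_0$ is false by the same example: $V_2=M_{z_1z_2}$ is even a pure isometry there, and $z_1+z_2\in\mathcal L\setminus\mathcal K_0$.

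The example does not refute the proposition itself, since $(0,M_z)$ on $H^2(\mathbb D)$ is a minimal $\mathbf{\Theta}_2$-isometric dilation of $(0,0)$. But reaching minimality with respect to $V_n$ alone needs a genuinely different idea. One option is to build $V_1,\dots,V_{n-1}$ directly on the minimal isometric dilation space of $T_n$, as in Theorem~\ref{Dilation}, rather than restricting a given dilation.
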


\begin{proof}
	Suppose that
	$\mathbf{V}=(V_1,\dots,V_n)$ is a
	$\mathbf{\Theta}_n$-isometric dilation of $\mathbf{T}$ acting on a
	Hilbert space $\mathcal K\supseteq\mathcal H$. Define
	\[
	\mathcal K_0
	=
	\overline{\operatorname{span}}
	\{V_n^k h:\ h\in\mathcal H,\ k\in\mathbb N\cup\{0\}\}.
	\]
	By the structural relations satisfied by a
	$\mathbf{\Theta}_n$-isometry, $\mathcal K_0$ is invariant under
	$V_1,\dots,V_n$. Hence, putting
	\[
	V_{0i}=V_i|_{\mathcal K_0},\quad 1\leq i\leq n,
	\]
	we obtain a commuting $n$-tuple
	$\mathbf V_0=(V_{01},\dots,V_{0n})$ on $\mathcal K_0$.
Since $\mathcal K_0$ is invariant under $\mathbf V$, the tuple
	$\mathbf V_0$ is a $\mathbf{\Theta}_n$-contraction. Moreover,
	$V_{0n}$ is an isometry, being the restriction of the isometry
	$V_n$ to the invariant subspace $\mathcal K_0$. Therefore, by
	\cite[Theorem 3.5]{Paul 1},
	$\mathbf V_0$ is a $\mathbf{\Theta}_n$-isometry.
For $h\in\mathcal H$ and $1\leq i\leq n$, we have
	\[
	V_{0i}^*h
	=
	V_i^*h
	=
	T_i^*h.
	\]
	Thus $\mathbf V_0$ is a $\mathbf{\Theta}_n$-isometric dilation of
	$\mathbf T$. Finally, by construction,
	\[
	\mathcal K_0
	=
	\overline{\operatorname{span}}
	\{V_{0n}^kh:\ h\in\mathcal H,\ k\in\mathbb N\cup\{0\}\}.
	\]
	Hence $\mathbf V_0$ is a minimal
	$\mathbf{\Theta}_n$-isometric dilation of $\mathbf T$.
\end{proof}

\begin{prop}\label{Minimal Co-isometric Dilation}
	Let $\mathbf{T}=(T_1,\dots,T_n)$ be a
	$\mathbf{\Theta}_n$-contraction on a Hilbert space $\mathcal H$.
	Let $\mathbf{V}=(V_1,\dots,V_n)$ be an $n$-tuple of operators on a
	Hilbert space $\mathcal K\supseteq\mathcal H$. Then $\mathbf V$ is a
	minimal $\mathbf{\Theta}_n$-isometric dilation of $\mathbf T$ if and
	only if
	$
	\mathbf V^*=(V_1^*,\dots,V_n^*)$
	is a minimal $\mathbf{\Theta}_n$-co-isometric dilation of
	$
	\mathbf T^*=(T_1^*,\dots,T_n^*).$
\end{prop}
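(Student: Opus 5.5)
This is essentially a matter of unwinding definitions. The statement uses the notion of a minimal $\mathbf{\Theta}_n$-co-isometric dilation, which the paper has not formally defined, so I will take the natural dual of Definition~\ref{Dilation Definiton}. A tuple $\mathbf W=(W_1,\dots,W_n)$ on $\mathcal K\supseteq\mathcal H$ is a \emph{$\mathbf{\Theta}_n$-co-isometric dilation} of $\mathbf T^*$ if three conditions hold: $\mathbf W$ is a $\mathbf{\Theta}_n$-co-isometry; $\mathcal H$ is co-invariant in the sense that $W_i|_{\mathcal H}$ compresses correctly; and
\[
P_{\mathcal H}W_{i_1}\cdots W_{i_k}|_{\mathcal H}=T_{i_1}^*\cdots T_{i_k}^*
\]
for all finite products, or equivalently $W_i^*|_{\mathcal H}=T_i$ after adjoints. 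Minimality means
\[
\mathcal K=\overline{\operatorname{span}}\{W_n^{*k}h:\ h\in\mathcal H,\ k\geq 0\}.
\]
I will verify each of the three ingredients (the class of the tuple, the dilation relation, and minimality) in both directions.

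\emph{Class.} By Definition~\ref{Defition 1}, $\mathbf V^*$ is a $\mathbf{\Theta}_n$-co-isometry exactly when $(\mathbf V^*)^*=\mathbf V$ is a $\mathbf{\Theta}_n$-isometry. Commutativity of $\mathbf V$ and of $\mathbf V^*$ are equivalent. So condition (1) of Definition~\ref{Dilation Definiton} for $\mathbf V$ is equivalent to the co-isometry condition for $\mathbf V^*$.

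\emph{Dilation relation.} Condition (2), namely $V_i^*|_{\mathcal H}=T_i^*$, says that $\mathcal H$ is invariant under each $V_i^*$ and that $(V_i^*)|_{\mathcal H}=T_i^*$. This is exactly the statement that $\mathbf V^*$ restricted to $\mathcal H$ is $\mathbf T^*$. Taking adjoints gives $P_{\mathcal H}V_i|_{\mathcal H}=T_i$, and since $\mathcal H$ is invariant under all the $V_j^*$, we also get $P_{\mathcal H}V^{\alpha}|_{\mathcal H}=T^{\alpha}$ for every multi-index $\alpha$. I will check this short computation in both directions, so that whichever formulation of ``co-isometric dilation'' is adopted (extension of $\mathbf T^*$, or compression to $\mathbf T$), it matches condition (2).

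\emph{Minimality.} The spanning set for $\mathbf V$ is $\{V_n^kh\}$, and $V_n^k=\big((V_n^*)^*\big)^k$. Hence the minimality condition for $\mathbf V$ and the minimality condition for $\mathbf V^*$, phrased with the adjoint of its last component, describe the same closed subspace. Both directions therefore follow at once.

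The only real obstacle is this definitional step: fixing the meaning of ``minimal co-isometric dilation'' so that it is exactly dual to Definition~\ref{Dilation Definiton}. Once that is settled, each equivalence is a one-line adjoint argument.
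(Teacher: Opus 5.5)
Your proposal is correct and follows the paper's proof, which is the same definitional unwinding: $\mathbf V$ is a $\mathbf{\Theta}_n$-isometry if and only if $\mathbf V^*$ is a $\mathbf{\Theta}_n$-co-isometry; condition (2) says exactly that $\mathbf V^*$ extends $\mathbf T^*$ from the invariant subspace $\mathcal H$; and both minimality conditions describe the same subspace $\overline{\operatorname{span}}\{V_n^kh: h\in\mathcal H,\ k\geq 0\}$.

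One small correction concerns your preliminary definition. For $\mathbf W=\mathbf V^*$, the subspace $\mathcal H$ is \emph{invariant} (not co-invariant) under each $W_i$, with $W_i|_{\mathcal H}=T_i^*$. So you only get $P_{\mathcal H}W_i^*|_{\mathcal H}=T_i$, not $W_i^*|_{\mathcal H}=T_i$. Also, if you adopt the pure compression formulation, recovering that invariance is not a bare adjoint computation: it needs minimality, via $\langle V_i^*h-T_i^*h,\,V_n^kh'\rangle=\langle h,T_iT_n^kh'\rangle-\langle T_i^*h,T_n^kh'\rangle=0$ for all $h,h'\in\mathcal H$ and $k\geq0$.
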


\begin{proof}
	Suppose first that $\mathbf V$ is a minimal
	$\mathbf{\Theta}_n$-isometric dilation of $\mathbf T$. By definition,
	$\mathbf V$ is a $\mathbf{\Theta}_n$-isometry and
	\[
	V_i^*|_{\mathcal H}=T_i^*,\quad 1\leq i\leq n.
	\]
	Thus $
	(V_1^*,\dots,V_n^*)$
	is a $\mathbf{\Theta}_n$-co-isometry and is a dilation of
	$\mathbf T^*$. Moreover, by the minimality of $\mathbf V$,
	\[
	\mathcal K
	=
	\overline{\operatorname{span}}
	\{V_n^k h:\ h\in\mathcal H,\ k\geq0\}.
	\]
	This is precisely the corresponding minimality condition for the
	adjoint tuple, with the distinguished operator replaced by
	$V_n^*$.

	Conversely, suppose that $\mathbf V^*$ is a
	$\mathbf{\Theta}_n$-co-isometric dilation of $\mathbf T^*$. Then,
	by taking adjoints, $\mathbf V$ is a
	$\mathbf{\Theta}_n$-isometry and
	\[
	V_i^*|_{\mathcal H}=T_i^*,\qquad 1\leq i\leq n.
	\]
	Hence $\mathbf V$ is a $\mathbf{\Theta}_n$-isometric dilation of
	$\mathbf T$. The minimality condition for $\mathbf V^*$ is
	equivalent, by taking adjoints, to the minimality condition for
	$\mathbf V$. Therefore $\mathbf V$ is a minimal
	$\mathbf{\Theta}_n$-isometric dilation of $\mathbf T$.
\end{proof}
Let $\mathbf{T}=(T_1,\dots,T_n)$ be a pure
$\mathbf{\Theta}_n$-contraction. Define
$
W:\mathcal H\longrightarrow
H^2(\mathbb D)\otimes\mathcal D_{T_n^*}$
by
\begin{equation}\label{W}
	W h
	=
	\sum_{k\geq0}z^k\otimes D_{T_n^*}T_n^{*k}h,
	\quad h\in\mathcal H.
\end{equation}
Since $T_n$ is a pure contraction, the standard defect-space
calculation shows that $W$ is an isometry. Indeed,
\[
\begin{aligned}
\|Wh\|^2
&=
\sum_{k\geq0}
\|D_{T_n^*}T_n^{*k}h\|^2\\
&=
\sum_{k\geq0}
\left(
\|T_n^{*k}h\|^2-\|T_n^{*(k+1)}h\|^2
\right)\\
&=
\|h\|^2,
\end{aligned}
\]
where we have used the purity of $T_n^*$.
We first determine $W^*$. For
$\eta\in\mathcal D_{T_n^*}$ and $k\geq0$, we have
\[
\begin{aligned}
\langle W^*(z^k\otimes\eta),h\rangle
&=
\left\langle
z^k\otimes\eta,
\sum_{m\geq0}z^m\otimes
D_{T_n^*}T_n^{*m}h
\right\rangle\\
&=
\langle\eta,D_{T_n^*}T_n^{*k}h\rangle\\
&=
\langle T_n^kD_{T_n^*}\eta,h\rangle.
\end{aligned}
\]
Consequently,
\begin{equation}\label{W*}
	W^*(z^k\otimes\eta)
	=
	T_n^kD_{T_n^*}\eta,
	\quad k\geq0.
\end{equation}
\begin{thm}\label{Dilation}
	Let $\mathbf{T}=(T_1,\dots,T_n)$ be a pure
	$\mathbf{\Theta}_n$-contraction on a Hilbert space $\mathcal H$.
	Suppose that
	\[
	\left\{
	A_l^{(i)}:
	0\leq l\leq p,\ 1\leq i\leq n-1
	\right\}
	\subseteq\mathcal B(\mathcal D_{T_n^*})
	\]
	satisfies 
	\begin{equation}\label{Condition}
	A_l^{(i)}
	=
	A_{p-l}^{(n-i)*} \quad \text{and}\quad D_{T_n^*}T_i^*
	=
	\sum_{l=0}^{p}
	A_l^{(i)*}D_{T_n^*}T_n^{*l},
	\quad 1\leq i\leq n-1.
	\end{equation}
	Consider the operators on
	$
	\mathcal K
	=
	H^2(\mathbb D)\otimes\mathcal D_{T_n^*}$
	given by
	\begin{equation}\label{V_i,V_n}
	V_i
	=
	\sum_{l=0}^{p}M_z^l\otimes A_l^{(i)},
	\quad 1\leq i\leq n-1,\quad \text{and}\quad
	V_n=M_z\otimes I_{\mathcal D_{T_n^*}}.
\end{equation}
	If $W$ is defined by \eqref{W}, then $W$ is an isometry and
	\[
	W^*V_i=T_iW^*,
	\quad 1\leq i\leq n.
	\]
	Moreover, if
	$
	\left(
	\gamma_1\sum_{l=0}^{p}A_l^{(1)}z^l,\dots,
	\gamma_{n-1}\sum_{l=0}^{p}A_l^{(n-1)}z^l
	\right)$
	is a $\Gamma_{n-1}$-contraction for every $z\in\mathbb T$, then
	$\mathbf V$ is a minimal pure $\mathbf{\Theta}_n$-isometric
	dilation of $\mathbf T$.
\end{thm}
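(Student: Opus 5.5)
The proof falls into three parts: the intertwining relations, the $\mathbf{\Theta}_n$-isometry property of $\mathbf V$ (with purity), and minimality.

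\emph{Intertwining.} The isometry of $W$ was already verified before the statement, using purity of $T_n^*$. The intertwining $W^*V_i=T_iW^*$ is equivalent to $V_i^*W=WT_i^*$, and I will check it on vectors $h\in\mathcal H$ coefficientwise.

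For $i=n$, we have $(M_z^*\otimes I)Wh=\sum_{k\ge0}z^k\otimes D_{T_n^*}T_n^{*(k+1)}h=WT_n^*h$ directly from \eqref{W}.

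For $1\le i\le n-1$, we have $V_i^*=\sum_{l=0}^pM_z^{*l}\otimes A_l^{(i)*}$. The coefficient of $z^k$ in $V_i^*Wh$ is therefore
\[
\sum_{l=0}^pA_l^{(i)*}D_{T_n^*}T_n^{*(k+l)}h=\sum_{l=0}^pA_l^{(i)*}D_{T_n^*}T_n^{*l}\,T_n^{*k}h .
\]
By \eqref{Condition} applied to the vector $T_n^{*k}h$, this equals $D_{T_n^*}T_i^*T_n^{*k}h$. Since $T_i$ and $T_n$ commute, this is $D_{T_n^*}T_n^{*k}T_i^*h$, which is the $z^k$-coefficient of $WT_i^*h$. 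Identifying $\mathcal H$ with $W\mathcal H$, we get $V_i^*|_{\mathcal H}=T_i^*$, so $\mathcal H$ is co-invariant for $\mathbf V$.

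\emph{$\mathbf V$ is a $\mathbf{\Theta}_n$-isometry.} Note that $V_i=T_{\Phi_i}$ with $\Phi_i(z)=\sum_lA_l^{(i)}z^l$, and $V_n=T_z$.

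First, the tuple must commute. I would derive commutativity of the $\Phi_i$ from the hypothesis: a $\Gamma_{n-1}$-contraction is by definition a commuting tuple, so $\Phi_i(z)$ and $\Phi_j(z)$ commute for all $z\in\mathbb T$. Since these are polynomials, they commute for all $z\in\mathbb C$, and hence the $V_i$ commute. Each $V_i$ clearly commutes with $V_n$.

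Next, I would show that $(\Phi_1(z),\dots,\Phi_{n-1}(z),zI)$ is a $\mathbf{\Theta}_n$-unitary-type tuple for each $z\in\mathbb T$. The symmetry $A_l^{(i)}=A_{p-l}^{(n-i)*}$ gives, for $|z|=1$,
\[
\Phi_{n-i}(z)^*z^p=\sum_lA_{p-l}^{(i)}\bar z^lz^p=\Phi_i(z).
\]
Together with the $\Gamma_{n-1}$-contractivity of $(\gamma_i\Phi_i(z))$, this is exactly the characterization of $\mathbf{\Theta}_n$-unitaries and $\mathbf{\Theta}_n$-isometries from \cite{Biswas 2} and \cite[Theorem 3.5]{Paul 1}, namely $T_i=T_{n-i}^*T_n^p$, $(\gamma_iT_i)$ a $\Gamma_{n-1}$-contraction, and $T_n$ isometric. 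I would apply that characterization directly to the Toeplitz tuple: $V_n$ is an isometry, and $V_{n-i}^*V_n^p=V_i$ holds because $T_{\bar\Phi}T_{z^p}=T_{\bar\Phi z^p}$ when $\bar\Phi$ has degree at most $p$ in $\bar z$. For the $\Gamma_{n-1}$-contraction condition on $(\gamma_iV_i)$, I would use the maximum modulus principle as in the proof of Theorem~\ref{Determinantal Representation}: contractivity on $\mathbb T$ extends to $\mathbb D$, and then Lemma~\ref{Lem 4} (in its $\Gamma_{n-1}$ form) transfers it to the Toeplitz operators.

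Purity follows because $V_n=M_z\otimes I$ is a pure isometry.

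\emph{Minimality.} The subspaces $\{V_n^kW\mathcal H\}$ span $\mathcal K$. Indeed, if $f\perp V_n^kWh$ for all $h$ and $k$, then by \eqref{W*} we have $\langle W^*M_z^{*k}f,h\rangle=0$. The constant coefficient of $M_z^{*k}f$, namely the $k$-th Taylor coefficient $f_k$, therefore satisfies $D_{T_n^*}f_k=0$. Since $f_k\in\mathcal D_{T_n^*}$, this forces $f_k=0$, so $f=0$.

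\emph{Main obstacle.} The hardest step is the $\mathbf{\Theta}_n$-isometry verification. It relies on the precise form of the characterization in \cite[Theorem 3.5]{Paul 1}, and on whether the $\Gamma_{n-1}$ condition is imposed on the $\gamma_i$-scaled tuple. I would first try to quote that theorem for $(V_1,\dots,V_n)$, and fall back on constructing the $\mathbf{\Theta}_n$-unitary extension to $L^2\otimes\mathcal D_{T_n^*}$ via multiplication by $\Phi_i$ on $\mathbb T$.
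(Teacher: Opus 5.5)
Your proposal is correct and follows essentially the same route as the paper. You derive the intertwining from the adjoint of the hypothesis on $D_{T_n^*}T_i^*$ and the commutativity of $\mathbf T$, checking the equivalent form $V_i^*W=WT_i^*$ coefficientwise. You then feed $V_i=V_{n-i}^*V_n^p$ and the $\Gamma_{n-1}$-condition into the cited characterization of $\mathbf{\Theta}_n$-isometries, and take purity from $M_z\otimes I$.

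Two of your steps spell out what the paper only asserts: the maximum-modulus/Lemma~\ref{Lem 4} transfer of the $\Gamma_{n-1}$-condition to $(\gamma_iV_i)$, and the orthogonality argument for minimality. In the minimality step, write out that
\[
D_{T_n^*}f_k=W^*M_z^{*k}f-T_nW^*M_z^{*(k+1)}f=0 .
\]
This follows from $M_z^{*k}f=1\otimes f_k+M_zM_z^{*(k+1)}f$ and $W^*V_n=T_nW^*$. It is needed because $W^*$ by itself mixes all Taylor coefficients.
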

\begin{proof}
By the Sz.-Nagy dilation theorem \cite{Nagy}, $V_n=M_z\otimes
I_{\mathcal D_{T_n^*}}$ is the minimal pure isometric dilation of
$T_n$. We first prove that
\[
W^*V_i=T_iW^*,\qquad 1\leq i\leq n.
\]
For $1\leq i\leq n-1$, $k\geq0$, and
$\eta\in\mathcal D_{T_n^*}$, we have
\begin{equation}\label{D 2}
\begin{aligned}
W^*V_i(z^k\otimes\eta)
&=
W^*\left(
\sum_{l=0}^{p}M_z^l\otimes A_l^{(i)}
\right)(z^k\otimes\eta)\\
&=
W^*\left(
\sum_{l=0}^{p}z^{k+l}\otimes A_l^{(i)}\eta
\right)\\
&=
\sum_{l=0}^{p}
T_n^{k+l}D_{T_n^*}A_l^{(i)}\eta\\
&=
T_n^k
\left(
\sum_{l=0}^{p}
T_n^lD_{T_n^*}A_l^{(i)}\eta
\right).
\end{aligned}
\end{equation}
Taking adjoints in \eqref{Condition}, we obtain
\[
T_iD_{T_n^*}
=
\sum_{l=0}^{p}
T_n^lD_{T_n^*}A_l^{(i)}.
\]
Therefore, from \eqref{D 2},
\begin{equation}\label{D 3}
\begin{aligned}
W^*V_i(z^k\otimes\eta)
&=
T_n^k
\left(
\sum_{l=0}^{p}
T_n^lD_{T_n^*}A_l^{(i)}\eta
\right)\\
&=
T_n^kT_iD_{T_n^*}\eta\\
&=
T_iT_n^kD_{T_n^*}\eta\\
&=
T_iW^*(z^k\otimes\eta).
\end{aligned}
\end{equation}
Here we have used the commutativity of the tuple $\mathbf T$.
For $i=n$, we have
\begin{equation}\label{D 4}
\begin{aligned}
W^*V_n(z^k\otimes\eta)
&=
W^*(z^{k+1}\otimes\eta)\\
&=
T_n^{k+1}D_{T_n^*}\eta\\
&=
T_n\left(T_n^kD_{T_n^*}\eta\right)\\
&=
T_nW^*(z^k\otimes\eta).
\end{aligned}
\end{equation}
Since the linear span of the vectors
$
\{z^k\otimes\eta:\ \eta\in\mathcal D_{T_n^*},\ k\geq0\}$
is dense in
$H^2(\mathbb D)\otimes\mathcal D_{T_n^*}$, it follows from
\eqref{D 3} and \eqref{D 4} that
\[
W^*V_i=T_iW^*,\quad 1\leq i\leq n.
\]
It remains to show that $\mathbf V$ is a minimal
$\mathbf{\Theta}_n$-isometric dilation. First, by the relation
$
A_l^{(i)}=A_{p-l}^{(n-i)*},$
we have
\[
V_i=V_{n-i}^*V_n^p,\quad 1\leq i\leq n-1.
\]
Moreover, suppose that
\[
\left(
\gamma_1\sum_{l=0}^{p}A_l^{(1)}z^l,\dots,
\gamma_{n-1}\sum_{l=0}^{p}A_l^{(n-1)}z^l
\right)
\]
is a $\Gamma_{n-1}$-contraction for every $z\in\mathbb T$. Then the
operator-valued polynomials
\[
\Phi_i(z):=\sum_{l=0}^{p}A_l^{(i)}z^l,
\quad 1\leq i\leq n-1,
\]
commute with one another for every $z\in\mathbb T$. Since each
$\Phi_i$ is analytic, the corresponding multiplication operators
also commute. Under the identification
$
H^2(\mathcal D_{T_n^*})
=
H^2(\mathbb D)\otimes\mathcal D_{T_n^*},$
we consequently have that
\[
\left(
\gamma_1\sum_{l=0}^{p}M_z^l\otimes A_l^{(1)},\dots,
\gamma_{n-1}\sum_{l=0}^{p}M_z^l\otimes A_l^{(n-1)}
\right)
\]
is a $\Gamma_{n-1}$-contraction for every $z\in\mathbb T$.
Therefore, by \cite[Theorem 3.10]{Biswas 2},
$
\mathbf V=(V_1,\dots,V_n)$
is a $\mathbf{\Theta}_n$-isometry on
$H^2(\mathbb D)\otimes\mathcal D_{T_n^*}$. Since
$
V_n=M_z\otimes I_{\mathcal D_{T_n^*}}$
is a pure isometry, $\mathbf V$ is a pure
$\mathbf{\Theta}_n$-isometry.
Finally,
\[
H^2(\mathbb D)\otimes\mathcal D_{T_n^*}
=
\overline{\operatorname{span}}
\{V_n^k(1\otimes\eta):
\eta\in\mathcal D_{T_n^*},\ k\geq0\}.
\]
Since
$
\overline{\operatorname{Ran}}D_{T_n^*}
=
\mathcal D_{T_n^*},$
and
$
W^*(1\otimes\eta)=D_{T_n^*}\eta,$
the dilation space is generated by the powers of $V_n$ applied to
the embedded copy of $\mathcal H$. Hence $\mathbf V$ is minimal.
Therefore, $\mathbf V$ is a minimal pure
$\mathbf{\Theta}_n$-isometric dilation of $\mathbf T$.
\end{proof}
We shall use the following standard consequence of the
Sz.-Nagy-Foias model theory; see
\cite[Lemma 4.5]{S. Pal 3} for a proof.

\begin{lem}\label{W isometry}
	Let $T_n$ be a contraction. Then
	\begin{equation}\label{W Property}
		WW^*+
		M_{\Theta_{T_n}}M_{\Theta_{T_n}}^*
		=
		I_{H^2(\mathbb D)\otimes\mathcal D_{T_n^*}}.
	\end{equation}
\end{lem}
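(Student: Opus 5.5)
The plan is to verify \eqref{W Property} by testing both sides against the kernel vectors $k_\lambda\otimes\xi$ of $H^2(\mathbb D)\otimes\mathcal D_{T_n^*}$, where $k_\lambda(z)=(1-\overline{\lambda}z)^{-1}$, $\lambda\in\mathbb D$ and $\xi\in\mathcal D_{T_n^*}$. The linear span of these vectors is dense, and both sides are bounded operators, so it suffices to prove
\[
\langle WW^*(k_\lambda\otimes\xi),k_\mu\otimes\zeta\rangle
+\langle M_{\Theta_{T_n}}^*(k_\lambda\otimes\xi),M_{\Theta_{T_n}}^*(k_\mu\otimes\zeta)\rangle
=\frac{\langle\xi,\zeta\rangle}{1-\mu\overline{\lambda}}
\]
for all $\lambda,\mu\in\mathbb D$ and all $\xi,\zeta\in\mathcal D_{T_n^*}$. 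The proof of the identity itself only uses that $W$ is a well-defined bounded map; this holds for any contraction, since the telescoping computation above gives $\|Wh\|\le\|h\|$.

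\textbf{Step 1 (the $W$-term).} Expand $k_\lambda=\sum_k\overline{\lambda}^kz^k$ and apply \eqref{W*} term by term:
\[
W^*(k_\lambda\otimes\xi)=\sum_{k\ge0}\overline{\lambda}^kT_n^kD_{T_n^*}\xi=(I-\overline{\lambda}T_n)^{-1}D_{T_n^*}\xi .
\]
Hence the first inner product equals
\[
\langle D_{T_n^*}(I-\mu T_n^*)^{-1}(I-\overline{\lambda}T_n)^{-1}D_{T_n^*}\xi,\zeta\rangle .
\]

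\textbf{Step 2 (the $\Theta$-term).} Use the standard multiplier identity $M_{\Theta_{T_n}}^*(k_\lambda\otimes\xi)=k_\lambda\otimes\Theta_{T_n}(\lambda)^*\xi$. This gives the second term as
\[
\frac{\langle\Theta_{T_n}(\mu)\Theta_{T_n}(\lambda)^*\xi,\zeta\rangle}{1-\mu\overline{\lambda}} .
\]
The lemma is therefore equivalent to the Sz.-Nagy--Foia\c{s} identity
\[
I_{\mathcal D_{T_n^*}}-\Theta_{T_n}(\mu)\Theta_{T_n}(\lambda)^*
=(1-\mu\overline{\lambda})\,D_{T_n^*}(I-\mu T_n^*)^{-1}(I-\overline{\lambda}T_n)^{-1}D_{T_n^*}.
\]

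\textbf{Step 3 (the main computation).} This identity is the expected obstacle. I would prove it directly from \eqref{Characteristic}, written with $T=T_n$:
\[
\Theta_{T}(\mu)=-T+\mu D_{T^*}(I-\mu T^*)^{-1}D_T ,
\]
where the factor $\mu$ comes from $\Theta$ being built from the resolvent of $T^*$. Then:
\begin{itemize}
\item Multiply out $\Theta_T(\mu)\Theta_T(\lambda)^*$.
\item Use the intertwinings $TD_T=D_{T^*}T$ and $D_TT^*=T^*D_{T^*}$, together with $D_T^2=I-T^*T$ and $D_{T^*}^2=I-TT^*$.
\item Move all the defect operators to the outside, so that everything is expressed through $D_{T^*}(I-\mu T^*)^{-1}(\cdots)(I-\overline{\lambda}T)^{-1}D_{T^*}$.
\item Reduce the middle factor using the resolvent-type identity
\[
(I-\mu T^*)(I-\overline{\lambda}T)+\cdots=(1-\mu\overline{\lambda})I ,
\]
which collapses the cross terms.
\end{itemize}
Rather than redo this bookkeeping from scratch, one may cite the classical computation in \cite{Nagy}, or \cite[Lemma 4.5]{S. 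Pal 3}. Once the identity is in hand, adding the results of Steps 1 and 2 gives exactly $\langle\xi,\zeta\rangle/(1-\mu\overline{\lambda})=\langle k_\lambda\otimes\xi,k_\mu\otimes\zeta\rangle$, and density of the kernel vectors finishes the proof.
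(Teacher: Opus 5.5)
Your argument is correct. It is the standard reproducing-kernel proof that stands behind the paper's bare citation of \cite[Lemma 4.5]{S. Pal 3}. The bracket hidden in your ``$+\cdots$'' is $(I-\mu T^*)(I-\overline{\lambda}T)+\overline{\lambda}(I-\mu T^*)T+\mu T^*(I-\overline{\lambda}T)-\mu\overline{\lambda}(I-T^*T)$, which does collapse to $(1-\mu\overline{\lambda})I$. You are also right that no purity is needed.

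Do say explicitly that you are correcting \eqref{Characteristic}, which as printed omits the factor $z$ in front of $D_{T^*}(I-zT^*)^{-1}D_T$. With the printed formula the lemma is false: for $T_n=0$ it gives $\Theta_{T_n}\equiv I$, hence $M_{\Theta_{T_n}}M_{\Theta_{T_n}}^*=I$, while $WW^*\neq0$.
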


We now obtain a functional model for a special class of pure
$\mathbf{\Theta}_n$-contractions.

\begin{thm}\label{Functional Model}
	Let $\mathbf{T}=(T_1,\dots,T_n)$ be a pure
	$\mathbf{\Theta}_n$-contraction on a Hilbert space $\mathcal H$.
	Suppose that
	\[
	\left\{A_l^{(i)}:
	0\leq l\leq p,\ 1\leq i\leq n-1,\ 
	A_l^{(i)}=A_{p-l}^{(n-i)*}\right\}
	\]
	is a collection of operators on $\mathcal D_{T_n^*}$ satisfying
	\eqref{Condition}. Then
	\begin{enumerate}
		\item for $1\leq i\leq n-1$,
		$
		T_i~\text{is unitarily equivalent to}~
		P_{\mathcal H_{T_n}}
		\left(\sum_{l=0}^{p}M_z^l\otimes A_l^{(i)}\right)
		\big|_{\mathcal H_{T_n}};$
		
		\item $
		T_n~\text{is unitarily equivalent to}~
		P_{\mathcal H_{T_n}}
		(M_z\otimes I_{\mathcal D_{T_n^*}})
		\big|_{\mathcal H_{T_n}},$
	\end{enumerate}
	where
	$
	\mathcal H_{T_n}
	=
	\big(H^2(\mathbb D)\otimes\mathcal D_{T_n^*}\big)
	\ominus
	M_{\Theta_{T_n}}
	\big(H^2(\mathbb D)\otimes\mathcal D_{T_n}\big).$
\end{thm}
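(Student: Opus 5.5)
The approach is to realise $\mathcal H$ inside $H^2(\mathbb D)\otimes\mathcal D_{T_n^*}$ through the isometry $W$ of \eqref{W}, and to identify the range of $W$ with the model space $\mathcal H_{T_n}$. Once that is done, both parts of the statement follow from the intertwining relations $W^*V_i=T_iW^*$ of Theorem~\ref{Dilation}.

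\textbf{Step 1: $W$ is an isometry onto $\mathcal H_{T_n}$.} Since $T_n^*$ is pure, the computation preceding \eqref{W*} shows that $W$ is an isometry. Hence $WW^*$ is the orthogonal projection onto $\operatorname{Ran}W$. By Lemma~\ref{W isometry},
\[
WW^*=I-M_{\Theta_{T_n}}M_{\Theta_{T_n}}^*.
\]
Because $T_n$ is pure, $M_{\Theta_{T_n}}$ is an isometry; this is the standard Sz.-Nagy--Foia\c{s} fact behind Theorem~\ref{Pure Contraction Model}. So $M_{\Theta_{T_n}}M_{\Theta_{T_n}}^*$ is the projection onto $\operatorname{Ran}M_{\Theta_{T_n}}$, which is closed. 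Therefore
\[
\operatorname{Ran}W=\bigl(H^2(\mathbb D)\otimes\mathcal D_{T_n^*}\bigr)\ominus M_{\Theta_{T_n}}\bigl(H^2(\mathbb D)\otimes\mathcal D_{T_n}\bigr)=\mathcal H_{T_n},
\]
and $W:\mathcal H\to\mathcal H_{T_n}$ is a unitary.

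\textbf{Step 2: the intertwining relations.} Write $V_i$ and $V_n$ as in \eqref{V_i,V_n}. The first part of Theorem~\ref{Dilation} uses only \eqref{Condition}, the commutativity of $\mathbf T$ and the purity of $T_n^*$. It does not use the $\Gamma_{n-1}$-contraction hypothesis, so it applies here and gives
\[
W^*V_i=T_iW^*, \qquad 1\le i\le n.
\]
Taking adjoints yields $V_i^*W=WT_i^*$. Consequently $\operatorname{Ran}W=\mathcal H_{T_n}$ is invariant under every $V_i^*$, i.e.\ it is co-invariant for $\mathbf V$.

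\textbf{Step 3: compression.} For $h\in\mathcal H$,
\[
WT_iW^*(Wh)=WT_ih=WW^*V_iWh=P_{\mathcal H_{T_n}}V_i\big|_{\mathcal H_{T_n}}(Wh).
\]
Thus, with $U=W:\mathcal H\to\mathcal H_{T_n}$ unitary,
\[
UT_iU^*=P_{\mathcal H_{T_n}}V_i\big|_{\mathcal H_{T_n}}, \qquad 1\le i\le n.
\]
Substituting $V_i=\sum_{l=0}^{p}M_z^l\otimes A_l^{(i)}$ gives part (1), and $V_n=M_z\otimes I_{\mathcal D_{T_n^*}}$ gives part (2). Part (2) is also Theorem~\ref{Pure Contraction Model} applied to $T_n$, and the same unitary $W$ works for all $n$ operators simultaneously.

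\textbf{Main obstacle.} The delicate point is Step 1, namely the precise equality $\operatorname{Ran}W=\mathcal H_{T_n}$. This needs two facts:
\begin{itemize}
\item the identity in Lemma~\ref{W isometry}, which holds with the paper's conventions for $W$ and $\Theta_{T_n}$;
\item that $M_{\Theta_{T_n}}$ is an isometry, which follows from purity of $T_n$ (the characteristic function is inner).
\end{itemize}
If Lemma~\ref{W isometry} is only available in the stated form, I would argue directly instead. $WW^*$ and $I-M_{\Theta_{T_n}}M_{\Theta_{T_n}}^*$ are equal; the first is a projection, so $M_{\Theta_{T_n}}M_{\Theta_{T_n}}^*$ is a projection, whose range is the closure of $\operatorname{Ran}M_{\Theta_{T_n}}$. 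This gives the orthogonal-complement description of $\mathcal H_{T_n}$ in any case, taking the closure in its definition.
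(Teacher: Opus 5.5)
Your proposal is correct and follows essentially the same route as the paper: identify $\operatorname{Ran}W$ with $\mathcal H_{T_n}$ using Lemma~\ref{W isometry} together with the fact that $M_{\Theta_{T_n}}$ is an isometry, then compress the intertwining relations $W^*V_i=T_iW^*$ from Theorem~\ref{Dilation} to get unitary equivalence via $W$. Two of your additions are accurate refinements of the paper's argument. First, only the intertwining part of Theorem~\ref{Dilation} is needed, not its $\Gamma_{n-1}$-contraction hypothesis. Second, your fallback deduces from Lemma~\ref{W isometry} alone that $M_{\Theta_{T_n}}M_{\Theta_{T_n}}^*$ is a projection, so you do not need to invoke inner-ness of $\Theta_{T_n}$ separately.
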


\begin{proof}
	Since $W$ is an isometry, $WW^*$ is the orthogonal projection
	onto $\operatorname{Ran}W$. On the other hand, since $T_n$ is
	pure, its characteristic function $\Theta_{T_n}$ is inner and
	hence $M_{\Theta_{T_n}}$ is an isometry. Therefore
	$M_{\Theta_{T_n}}M_{\Theta_{T_n}}^*$ is the orthogonal projection
	onto
	$
	M_{\Theta_{T_n}}
	\big(H^2(\mathbb D)\otimes\mathcal D_{T_n}\big).$
	It follows from Lemma~\ref{W isometry} that
	\[
	\operatorname{Ran}W
	=
	\left(
	H^2(\mathbb D)\otimes\mathcal D_{T_n^*}
	\right)
	\ominus
	M_{\Theta_{T_n}}
	\left(
	H^2(\mathbb D)\otimes\mathcal D_{T_n}
	\right)
	=
	\mathcal H_{T_n}.
	\]
	Thus $W$ is a unitary operator from $\mathcal H$ onto
	$\mathcal H_{T_n}$.
By Theorem~\ref{Dilation},
	\[
	W^*V_i=T_iW^*,\quad 1\leq i\leq n,
	\]
	where
	\[
	V_i=\sum_{l=0}^{p}M_z^l\otimes A_l^{(i)},
	\quad 1\leq i\leq n-1,\quad \text{and}\quad
	V_n=M_z\otimes I_{\mathcal D_{T_n^*}}.
	\]
	Multiplying the intertwining relations on the right by $W$, we
	obtain
	\[
	W^*V_iW=T_i,\quad 1\leq i\leq n.
	\]
	Since $W$ is a unitary operator from $\mathcal H$ onto
	$\mathcal H_{T_n}$, this shows that
	\[
	T_i
	\cong
	P_{\mathcal H_{T_n}}
	\left(\sum_{l=0}^{p}M_z^l\otimes A_l^{(i)}\right)
	\big|_{\mathcal H_{T_n}},
	\quad 1\leq i\leq n-1,\quad \text{and}\quad
	T_n
	\cong
	P_{\mathcal H_{T_n}}
	(M_z\otimes I_{\mathcal D_{T_n^*}})
	\big|_{\mathcal H_{T_n}}.
	\]
	This completes the proof.
\end{proof}
\section{Matricial von Neumann Inequality on the Distinguished Varieties}
\label{von Neumann Inequality}

In this section, we establish a matricial von Neumann type inequality
for a class of $\mathbf{\Theta}_n$-contractions on distinguished
varieties in $\mathbf{\Theta}_n$.

\begin{thm}\label{von Neumann inequality}
	Let $\mathbf{T}=(T_1,\dots,T_n)$ be a
	$\mathbf{\Theta}_n$-contraction on a Hilbert space $\mathcal H$
	such that $T_n^*$ is pure. Suppose that
	\[
	\left\{A_l^{(i)}:
	0\leq l\leq p,\ 1\leq i\leq n-1,\ 
	A_l^{(i)}=A_{p-l}^{(n-i)*}\right\}
	\]
	is a collection of operators on $\mathcal D_{T_n^*}$ satisfying
	\begin{equation}\label{Condition *}
	\sum_{l=0}^{k}
	[A_l^{(i)},A_{k-l}^{(j)}]=0,
	\quad D_{T_n^*}T_i^*
	=
	\sum_{l=0}^{p}
	A_l^{(i)*}D_{T_n^*}T_n^{*\,l},
	\quad 0\leq k\leq 2p,\quad
	1\leq i,j\leq n-1.
	\end{equation}
	and $$
	\left(
	\gamma_1\displaystyle\sum_{l=0}^{p}A_l^{(1)}z^l,\dots,
	\gamma_{n-1}\displaystyle\sum_{l=0}^{p}A_l^{(n-1)}z^l
	\right)$$
	is a $\Gamma_{n-1}$-contraction for every $z\in\mathbb T$.
	Suppose further that the matrices $A_l^{(i)}$ determine a
	distinguished variety $\Omega_{\mathbf T}$ through the
	determinantal representation of Theorem~\ref{Determinantal Representation}.
	Then, for every matrix-valued polynomial
	$
	P\in\mathcal M_q\big(\mathbb C[z_1,\dots,z_n]\big),
 q\geq1,$
	we have
	\begin{equation}\label{von Neumann 1}
		\|P(T_1,\dots,T_n)\|
		\leq
		\sup_{\boldsymbol{\theta}\in
		\overline{\Omega}_{\mathbf T}\cap b\mathbf{\Theta}_n}
		\|P(\boldsymbol{\theta})\|,
	\end{equation}
	and
	\begin{equation}\label{von Neumann 2}
		\|P(T_1^*,\dots,T_n^*)\|
		\leq
		\sup_{\boldsymbol{\theta}\in
		\overline{\Omega}_{\mathbf T}\cap b\mathbf{\Theta}_n}
		\|P(\boldsymbol{\theta})\|.
	\end{equation}
\end{thm}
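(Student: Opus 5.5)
The proof follows the Agler--McCarthy scheme: dilate, then transfer to boundary points of the variety. Since $T_n^*$ is pure, I would apply Theorem~\ref{Dilation} and Theorem~\ref{Functional Model} to the tuple with the roles set up so that the isometry $W$ of \eqref{W} is defined with $D_{T_n^*}$ and $T_n^{*k}$. Here purity of $T_n^*$ is exactly what makes $W$ isometric. The intertwining $W^*V_i=T_iW^*$ holds by the computation in Theorem~\ref{Dilation}, using the second relation in \eqref{Condition *}. Since $W\mathcal H$ is co-invariant for $\mathbf V$, we get $P(T_1,\dots,T_n)=W^*P(V_1,\dots,V_n)W$ for every matrix polynomial $P$ (tensoring with $I_q$). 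Hence
\[
\|P(T_1,\dots,T_n)\|\le\|P(V_1,\dots,V_n)\|=\sup_{z\in\mathbb D}\|P(\Phi_1(z),\dots,\Phi_{n-1}(z),z)\|,
\]
where $\Phi_i(z)=\sum_l A_l^{(i)}z^l$. The equality holds because $P(\mathbf V)$ is the analytic Toeplitz operator with symbol $z\mapsto P(\Phi_1(z),\dots,\Phi_{n-1}(z),zI)$ (as in Lemma~\ref{Lem 4}). By the maximum modulus principle this supremum equals the supremum over $z\in\mathbb T$.

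Next I would evaluate $\|P(\Phi(z),zI)\|$ for $z\in\mathbb T$. The first condition in \eqref{Condition *} makes $\Phi_1(z),\dots,\Phi_{n-1}(z)$ commute. The symmetry $A_l^{(i)}=A_{p-l}^{(n-i)*}$ gives $\Phi_i(z)=\Phi_{n-i}(z)^*z^p$ on $\mathbb T$. Together with the $\Gamma_{n-1}$-contraction hypothesis, the argument behind \cite[Theorem 3.10]{Biswas 2} should show that $(\Phi_1(z),\dots,\Phi_{n-1}(z),zI)$ is a $\mathbf\Theta_n$-unitary for each $z\in\mathbb T$. In particular the $\Phi_i(z)$ are commuting normal matrices, so they are simultaneously unitarily diagonalizable. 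Then $\|P(\Phi(z),zI)\|=\max_j\|P(\lambda^{(j)}(z),z)\|$ over the joint eigenvalues $\lambda^{(j)}(z)$, each of which satisfies $(\lambda^{(j)}(z),z)\in b\mathbf\Theta_n$.

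It then remains to show that every such point $(\lambda(z),z)$ with $z\in\mathbb T$ lies in $\overline{\Omega}_{\mathbf T}$. I would take a joint eigenvalue at a boundary point $z_0$ and perturb to $z=rz_0$, $r<1$. Joint eigenvalues of the commuting polynomial family $\Phi(z)$ can be chosen continuously along rays, since they are roots of the determinantal equations $f_i$ and the fibres are finite. Points $(\lambda(rz_0),rz_0)$ lie in $\Omega_{\mathbf T}$ by the representation \eqref{Omega}, which requires knowing they belong to $\mathbf\Theta_n$; this holds by condition (2) of Theorem~\ref{Determinantal Representation} via \cite[Theorem 3.1]{Paul}. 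Letting $r\to1$ gives $(\lambda(z_0),z_0)\in\overline{\Omega}_{\mathbf T}\cap b\mathbf\Theta_n$, which yields \eqref{von Neumann 1}.

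For \eqref{von Neumann 2}, write $\widetilde P(\boldsymbol\theta)=P(\overline{\boldsymbol\theta})^*$, entrywise with conjugated coefficients and transposed. Then $P(T_1^*,\dots,T_n^*)=\widetilde P(T_1,\dots,T_n)^*$, so applying \eqref{von Neumann 1} to $\widetilde P$ bounds it by $\sup\|P(\overline{\boldsymbol\theta})\|$. The needed invariance of $\overline{\Omega}_{\mathbf T}\cap b\mathbf\Theta_n$ under conjugation does not follow automatically. Instead I would dilate $\mathbf T^*$ directly: the same $W$ and Toeplitz structure give $P(\mathbf T^*)=W^*P(\mathbf V^*)W$ restricted appropriately, and on $\mathbb T$ the symbols $\Phi_i(z)^*$ are normal with conjugate eigenvalues. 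The hardest steps are this adjoint bookkeeping and the continuity/limit argument showing boundary eigenvalues lie in $\overline{\Omega}_{\mathbf T}$.
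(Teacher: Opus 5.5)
\textbf{Part \eqref{von Neumann 1}.} Your argument is sound, and it is cleaner than the paper's. You use the direction that Theorem~\ref{Dilation} actually gives: $W^*V_i=T_iW^*$, hence $P(\mathbf T)=W^*P(\mathbf V)W$. You then combine the sup-norm of the analytic symbol, maximum modulus, and simultaneous diagonalization of $\Phi(z)$ for $z\in\mathbb T$. Normality there is immediate from $\Phi_i(z)^*=\bar z^p\Phi_{n-i}(z)$ and commutativity. Finally you approximate from $rz_0$ using condition (2).

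The paper instead treats the Toeplitz tuple as a dilation of $\mathbf T^*$, with $T_{\Phi_i}^*|_{\mathcal H}=T_i$. It passes through $\widehat P$ and the auxiliary variety $\Omega^*_{\mathbf T}$. It also simply asserts $V_S\cap b\mathbf{\Theta}_n=\partial\Omega_{\mathbf T}$ to place $\sigma_T(\Phi(z),zI)$ in $\overline{\Omega}_{\mathbf T}$, which is what your approximation step actually proves.

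One repair is needed. Your justification ``roots of the $f_i$, finite fibres'' does not control joint eigenvalues. Over a fixed $\theta_n$, $Z(f_1,\dots,f_{n-1})$ is the product $\sigma(\Phi_1(\theta_n))\times\cdots\times\sigma(\Phi_{n-1}(\theta_n))$, which is generally larger than $\sigma_T(\Phi(\theta_n))$. You only need that every joint eigenvalue at $z_0$ is a limit of joint eigenvalues at $r_kz_0$. This follows from the identity $\det\bigl(wI-\sum_i c_i\Phi_i(z)\bigr)=\prod_{j=1}^N\bigl(w-\sum_i c_i\lambda_i^{(j)}(z)\bigr)$, where the $\lambda^{(j)}(z)$ are the diagonals of a simultaneous triangularization. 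Combine it with compactness and unique factorization in $\mathbb C[c,w]$.

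\textbf{Part \eqref{von Neumann 2}: the gap.} Your proposed fix is circular. From $V_i^*W=WT_i^*$ you do get $P(\mathbf T^*)=W^*P(\mathbf V^*)W$. However,
$\|P(\mathbf V^*)\|=\|\widetilde P(\mathbf V)\|=\sup_{z\in\mathbb T}\max_j\|P(\overline{\lambda^{(j)}(z)},\bar z)\|$, which is again a supremum over the conjugated set $\{\overline{\boldsymbol\theta}:\boldsymbol\theta\in\overline{\Omega}_{\mathbf T}\cap b\mathbf{\Theta}_n\}$. That is exactly the obstruction you had already noticed.

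No bookkeeping removes it, because \eqref{von Neumann 2} is false as stated. Take $n=2$, $m=p=1$ (so $\mathbf{\Theta}_2=G_2$), $\mathcal H=\mathbb C$, and $\mathbf T=(a,0)$ with $a=i/2$, $A_0^{(1)}=a$, $A_1^{(1)}=\bar a$. Every hypothesis holds:
\begin{itemize}
\item $T_2=T_2^*=0$ is pure under any convention, and $\mathcal D_{T_2^*}=\mathbb C$.
\item The symmetry and commutator conditions hold trivially.
\item The intertwining relation reads $\bar a=\bar a\cdot 1+a\cdot 0$.
\item The $\Gamma_1$ condition holds: $|\tfrac12(a+\bar a z)|\le\tfrac12$ on $\mathbb T$.
\item Conditions (1)--(4) hold, with $\Omega_{\mathbf T}=\{(a+\bar a\theta_2,\theta_2):|\theta_2|<1\}$.
\end{itemize}
Now let $P(\theta_1,\theta_2)=\theta_1-a-\bar a\theta_2$, which vanishes on $\overline{\Omega}_{\mathbf T}$. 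Then $P(T_1^*,T_2^*)=\bar a-a=-i$, so the left side of \eqref{von Neumann 2} is $1$ while the right side is $0$.

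What the method genuinely proves for $\mathbf T^*$ is the bound over the conjugated set. This is the set the paper calls $\overline{\Omega}{}^*_{\mathbf T}\cap b\mathbf{\Theta}_n$ in $(*)$, built from $\widetilde\Phi_i(z)=\sum_l A_l^{(i)*}z^l$. It coincides with $\overline{\Omega}_{\mathbf T}\cap b\mathbf{\Theta}_n$ only under extra symmetry, for example when all $A_l^{(i)}$ have real entries, so that $\Phi_i(\bar z)=\overline{\Phi_i(z)}$. The paper does not close this either: $(*)$ relates $\Omega_{\mathbf T}$ to the different variety $\Omega^*_{\mathbf T}$, and its proof stops after \eqref{von Neumann 1}.
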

\begin{proof}
	Since $\dim\mathcal D_{T_n^*}<\infty$, put $
		m=\dim\mathcal D_{T_n^*}.$
	Thus each $A_l^{(i)}$ is an $m\times m$ matrix. Since the
	operators $A_l^{(i)}$ determine the distinguished variety
	$\Omega_{\mathbf T}$, Theorem~\ref{Determinantal Representation}
	gives
	\[
	f_i(\theta_1,\ldots,\theta_n)
	=
	\det(\Phi_i(\theta_n)-\theta_iI),
	\quad 1\leq i\leq n-1,
	\]
	where $
	\Phi_i(z)=\sum_{l=0}^p A_l^{(i)}z^l,$
	and
	$
	\Omega_{\mathbf T}
	=
	V(f_1,\ldots,f_{n-1})\cap\mathbf\Theta_n.$
	We first observe that the commutator condition
	\[
	\sum_{l=0}^{k}[A_l^{(i)},A_{k-l}^{(j)}]=0,
	\quad 0\leq k\leq2p,
	\]
	implies
	\[
	[\Phi_i(z),\Phi_j(z)]=0,
	\quad z\in\mathbb C.
	\]
	 Since $
	\left(
	\gamma_1\displaystyle\sum_{l=0}^{p}A_l^{(1)}z^l,\dots,
	\gamma_{n-1}\displaystyle\sum_{l=0}^{p}A_l^{(n-1)}z^l
	\right)$ is a $\Gamma_{n-1}$-contraction for every $z\in\mathbb T$, it implies that $(\gamma_1\Phi_1(z),\ldots,\gamma_{n-1}\Phi_{n-1}(z))$
is a $\Gamma_{n-1}$-contraction for every $z\in\mathbb T$. 
Moreover, the relation
$
A_l^{(i)}=A_{p-l}^{(n-i)*}$
implies, for $z\in\mathbb T$,
\[
\Phi_i(z)=z^p\Phi_{n-i}(z)^*,
\quad 1\leq i\leq n-1.
\]
Consequently,
\[
M_{\Phi_i}
=
M_{\Phi_{n-i}}^*M_z^p,
\quad 1\leq i\leq n-1,
\]
on $L^2(\mathcal D_{T_n})$.
Since $M_z$ is unitary on $L^2(\mathcal D_{T_n})$, it follows from
\cite[Theorem 3.2]{Biswas 2} that
$
(M_{\Phi_1},\ldots,M_{\Phi_{n-1}},M_z)$
is a $\mathbf{\Theta}_n$-unitary on
$L^2(\mathcal D_{T_n})$.
Equivalently, for every $z\in\mathbb T$,
$
(\Phi_1(z),\ldots,\Phi_{n-1}(z),zI)$
is a commuting tuple of normal operators whose Taylor joint spectrum
is contained in $b\mathbf{\Theta}_n$.

On the other hand, since the operators $A_l^{(i)}$ determine the
distinguished variety $\Omega_{\mathbf T}$, we have
$
V_S\cap b\mathbf{\Theta}_n
=
\partial\Omega_{\mathbf T},$
where
\[
S=\{f_i:1\leq i\leq n-1\},
\quad
f_i(\theta_1,\ldots,\theta_n)
=
\det(\Phi_i(\theta_n)-\theta_iI).
\]
Therefore, for every $z\in\mathbb T$,
\[
\sigma_T(\Phi_1(z),\ldots,\Phi_{n-1}(z),zI)
\subseteq
V_S\cap b\mathbf{\Theta}_n
=
\partial\Omega_{\mathbf T}.
\]
Thus
$
(\Phi_1(z),\ldots,\Phi_{n-1}(z),zI)$
is a $\mathbf{\Theta}_n$-unitary for every $z\in\mathbb T$.
By \cite[Lemma 5.8]{S. Pal 1}, $\partial\Omega_{\mathbf T}$ is a
spectral set for
$
(M_{\Phi_1},\ldots,M_{\Phi_{n-1}},M_z).$
Since
$
(\gamma_1\Phi_1(z),\ldots,
\gamma_{n-1}\Phi_{n-1}(z))$
is a $\Gamma_{n-1}$-contraction for every $z\in\mathbb T$, Theorem
\ref{Dilation} yields that
$
(M_{\Phi_1},\ldots,M_{\Phi_{n-1}},M_z)$
is a $\mathbf{\Theta}_n$-isometric dilation of
$
\mathbf T^*=(T_1^*,\ldots,T_n^*).$
Consequently,
$
(M_{\Phi_1}^*,\ldots,M_{\Phi_{n-1}}^*,M_z^*)$
is a $\mathbf{\Theta}_n$-co-isometric dilation of
$\mathbf T$. This yields
\begin{equation*}
\begin{aligned}
T_{\Phi_i}^*|_{\mathcal H}&=T_i,
\quad 1\leq i\leq n-1,\quad
T_z^*|_{\mathcal H}&=T_n.
\end{aligned}
\end{equation*}
Since
$
(M_{\Phi_1},\ldots,M_{\Phi_{n-1}},M_z)$
is a $\mathbf{\Theta}_n$-unitary on
$L^2(\mathcal D_{T_n})$, its restriction to the joint invariant
subspace $H^2(\mathcal D_{T_n})$ is a
$\mathbf{\Theta}_n$-isometry. Hence
$
(T_{\Phi_1},\ldots,T_{\Phi_{n-1}},T_z)$
is a $\mathbf{\Theta}_n$-isometric dilation of
$
\mathbf T^*=(T_1^*,\ldots,T_n^*).$
Consequently,
$
(T_{\Phi_1}^*,\ldots,T_{\Phi_{n-1}}^*,T_z^*)$
is a $\mathbf{\Theta}_n$-co-isometric dilation of $\mathbf T$. Let $P$ be a matrix-valued polynomial in $n$ variables and define
$\widehat P$ by
\[
\widehat P(X_1,\ldots,X_n)
=
P(X_1^*,\ldots,X_n^*)^*.
\]
Then
\[
\begin{aligned}
\|P(T_1,\ldots,T_n)\|
&=
\|P(T_{\Phi_1}^*|_{\mathcal H},
       \ldots,
       T_{\Phi_{n-1}}^*|_{\mathcal H},
       T_z^*|_{\mathcal H})\|\\
&\leq
\|P(T_{\Phi_1}^*,\ldots,T_{\Phi_{n-1}}^*,T_z^*)\|\\
&=
\|\widehat P(T_{\Phi_1},\ldots,T_{\Phi_{n-1}},T_z)\|.
\end{aligned}
\]
Since
$
(T_{\Phi_1},\ldots,T_{\Phi_{n-1}},T_z)$
is the restriction to $H^2(\mathcal D_{T_n})$ of the multiplication
tuple
$
(M_{\Phi_1},\ldots,M_{\Phi_{n-1}},M_z)$
on $L^2(\mathcal D_{T_n})$, we obtain
\[
\begin{aligned}
\|P(T_1,\ldots,T_n)\|
&\leq
\|\widehat P(M_{\Phi_1},\ldots,M_{\Phi_{n-1}},M_z)\|\\
&=
\sup_{|\lambda|=1}
\|\widehat P(
\Phi_1(\lambda),\ldots,\Phi_{n-1}(\lambda),\lambda I
)\|.
\end{aligned}
\]
We consider the variety
\begin{equation*}
\begin{aligned}
\Omega_{\mathbf T}^*
=
\Big\{&
(\theta_1,\ldots,\theta_{n-1},\theta_n)\in\mathbf{\Theta}_n:
(\theta_1,\ldots,\theta_{n-1})
\in
\sigma_T(\widetilde{\Phi}_1(\theta_n),
\ldots,\widetilde{\Phi}_{n-1}(\theta_n))
\Big\},
\end{aligned}
\end{equation*}
where
$
\widetilde{\Phi}_i(z)
=
\sum_{l=0}^p A_l^{(i)*}z^l, 1\leq i\leq n-1.$
By the coefficient symmetry
$
A_l^{(i)}=A_{p-l}^{(n-i)*},$
we have, for $|z|=1$,
\[
\widetilde{\Phi}_i(z)
=
z^p\Phi_{n-i}(\overline z).
\]
In particular, the boundary variety associated with the adjoint
coefficients is obtained from the original boundary variety by the
corresponding conjugation of the spectral parameters. Hence
\[
\overline{\Omega}_{\mathbf T}^*\cap b\mathbf{\Theta}_n
=
\left\{
(\overline{\theta}_1,\ldots,\overline{\theta}_n):
(\theta_1,\ldots,\theta_n)
\in
\overline{\Omega}_{\mathbf T}\cap b\mathbf{\Theta}_n
\right\}.
\tag{*}
\]
Now let $p$ be a matrix-valued polynomial and define
\[
\widehat p(X_1,\ldots,X_n)
=
p(X_1^*,\ldots,X_n^*)^*.
\]
Using the normality of
$
(M_{\Phi_1},\ldots,M_{\Phi_{n-1}},M_z)$
on $L^2(\mathcal D_{T_n})$, together with the spectral mapping theorem,
we obtain
\begin{equation*}
\begin{aligned}
\|p(T_1,\ldots,T_n)\|
&\leq
\sup_{\lambda\in\mathbb T}
\sup_{\boldsymbol\theta\in
\sigma_T(\Phi_1(\lambda),\ldots,\Phi_{n-1}(\lambda),
\lambda I)}
\|\widehat p(\boldsymbol\theta)\|\\
&=
\sup_{\boldsymbol\theta\in
\overline{\Omega}_{\mathbf T}^*\cap b\mathbf{\Theta}_n}
\|\widehat p(\boldsymbol\theta)\|.
\end{aligned}
\end{equation*}
For every $\boldsymbol\theta\in\mathbb C^n$,
$
\widehat p(\boldsymbol\theta)
=
p(\overline{\boldsymbol\theta})^*,$
and hence
$
\|\widehat p(\boldsymbol\theta)\|
=
\|p(\overline{\boldsymbol\theta})\|.$
Consequently, by $(*)$,
\begin{equation*}
\begin{aligned}
\|p(T_1,\ldots,T_n)\|
&\leq
\sup_{\boldsymbol\theta\in
\overline{\Omega}_{\mathbf T}^*\cap b\mathbf{\Theta}_n}
\|p(\overline{\boldsymbol\theta})\|\\
&=
\sup_{\boldsymbol\theta\in
\overline{\Omega}_{\mathbf T}\cap b\mathbf{\Theta}_n}
\|p(\boldsymbol\theta)\|.
\end{aligned}
\end{equation*}
This proves \eqref{von Neumann 1}. This completes the proof.
	\end{proof}

	\begin{rem}
It follows from the proof of Theorem \ref{von Neumann inequality} that both
$\mathbf{T}$ and $\mathbf{T}^*$ admit normal boundary dilations whose
joint spectra are contained in the distinguished boundary of
$\mathbf{\Theta}_n$. Moreover, the corresponding boundary spectra are
contained in the distinguished varieties
$\overline{\Omega}_{\mathbf T}\cap b\mathbf{\Theta}_n$ and
$\overline{\Omega}_{\mathbf T}^*\cap b\mathbf{\Theta}_n$, respectively.
\end{rem}

	\noindent (A. Gupta) \sc{Department of Mathematics, IIT Bhilai, 6th Lane Road, Jevra, Chhattisgarh 491002}\\
	{E-mail address:} {aparnagupta@iitbhilai.ac.in}
	
	\vspace{.5cm}
	
	\noindent (S. Mandal) \sc{Department of Mathematics, IIT Bhilai, 6th Lane Road, Jevra, Chhattisgarh 491002}\\
	{E-mail address:} {shubhankarm@iitbhilai.ac.in}
	
	\vspace{.5cm}
	
	\noindent (A. Pal) \sc{Department of Mathematics, IIT Bhilai, 6th Lane Road, Jevra, Chhattisgarh 491002}\\
	{E-mail address:} {avijit@iitbhilai.ac.in}
	
	\vspace{.5cm}
	
	\noindent (B. Paul) \sc{Department of Mathematics, IIT Bhilai, 6th Lane Road, Jevra, Chhattisgarh 491002}\\
	{E-mail address:} {bhaskarpaul@iitbhilai.ac.in}  
\end{document}